\newtheorem{theorem}{Theorem}[subsection]
\newtheorem{lemma}[theorem]{Lemma}
\newtheorem{corollary}[theorem]{Corollary}
\newtheorem{question}[theorem]{Question}
\newtheorem{proposition}[theorem]{Proposition}
\theoremstyle{definition}
\newtheorem{definition}[theorem]{Definition}
\newtheorem{example}[theorem]{Example}
\newtheorem{remark}[theorem]{Remark}
\newtheorem{convention}[theorem]{Convention}
\numberwithin{equation}{theorem}
\def\Q{{\mathbb {Q}}_p}
\def\e{\mathrm{e}}
 \def\ra{\rightarrow}
\def\ZZ{{\mathbb Z}}
\def\OO{\mathcal{O}}
\def\int{\mathrm{int}}
\def\bd{\mathrm{bd}}
\def\at{\widetilde{\mathbf{A}}}
\def\bt{\widetilde{\mathbf{B}}}
\def\calE{\mathcal{E}}
\def\rig{\mathrm{rig}}
\DeclareMathOperator{\rank}{rank}
\def\m{(\varphi,\Gamma)}
\def\r{\mathcal{R}}
\begin{document}
\title{Slope filtrations in families}
\author{Ruochuan Liu\\ Department of Mathematics\\ University of Michigan\\ Ann Arbor, MI 48109\\ ruochuan@umich.edu}
\maketitle

\abstract
{This paper concerns arithmetic families of $\varphi$-modules over reduced affinoid spaces. For such a family, we first prove that the slope polygons is lower semicontinuous around any rigid point. If the slope polygons are locally constant around a rigid point, we further prove that around this point, the family has a global slope filtration after base change to some extended Robba ring.}

\tableofcontents
\section*{Introduction}
The slope filtrations for Frobenius modules over the Robba ring were originally introduced in the context of rigid cohomology by Kedlaya as the key ingredient of his proof of Crew's conjecture \cite{Ke04}. Roughly speaking, the slope filtrations give a partial analogue, for Frobenius-semilinear actions on finite free modules over the
Robba ring, of the eigenspace decompositions of linear transformations. It was discovered by Berger, through his construction of $\m$-modules associated to $p$-adic Galois representations, that the slope filtration theorem is also a fundamental ingredient for $p$-adic Hodge theory. For instance, it allowed Berger to prove Fontaine's conjecture that de Rham implies potentially
semistable and to give a new proof of the Colmez-Fontaine theorem that weakly admissible implies admissible. Recently, the work of Fontaine and Fargues \cite{FF10} has revealed more $p$-adic Hodge theoretic aspects of the slope filtration theorem. Namely, they reformulate it in terms of the Harder-Narasimhan filtrations for vector bundles over the \emph{fundamental curve of $p$-adic Hodge theory}.

This paper grows out of an attempt to generalize the slope filtration theorem to families of Frobenius modules with an eye towards applications to families of $p$-adic representations (i.e. relative $p$-adic Hodge theory). There are actually two distinct forms of ``families" of $p$-adic representations. One is continuous representations of absolute Galois groups of finite extensions of $\Q$ on finite locally free modules over affinoid algebras over $\Q$ such as the families of $p$-adic representations associated to $p$-adic families of automorphic forms; these are called \emph{arithmetic families}. Another one is continuous representations of \'etale fundamental groups of affinoid spaces over finite extensions of $\Q$ on finite dimensional $\Q$-vector spaces; these are called \emph{geometric families}. In \cite{BC07}, Berger and Colmez constructed a functor from arithmetic families of $p$-adic representations to families of $\m$-modules. For geometric families, the $\m$-module functor is constructed in \cite{KL11B}. It turns out that the $\m$-modules associated to these two types of families of $p$-adic representations have quite different features. Loosely speaking, the ``coefficients" for arithmetic families of $\m$-modules are of characteristic $0$, and $\varphi$ acts trivially on them whereas the ``coefficients" for geometric families of $\m$-modules are of characteristic $p$, and $\varphi$ acts on them as the $p$-th power Frobenius.

In this paper, inspired by Berger-Colmez's construction, we consider slope filtrations for arithmetic families of $\varphi$-modules. First of all, it is straightforward to see that for such families, a necessary condition to have global slope filtrations, at least locally around rigid points, is the local constancy of slope polygons. However, it is not difficult to see that this is not true in general (see \S2.4 for an example). Due to this fact, our first main result then concerns variations of slope polygons.
To state the result, we first introduce a few notations (see the body of the paper for more details). We fix a complete discretely valued field $K$ of mixed characteristic $(0,p)$ to be the base field of the Robba ring, and fix a relative Frobenius lift $\varphi$ on $\r_K$. Fix a reduced affinoid space $M(A)$ over $\Q$ to be the base for the families. Let $\r_{A_K}$ be the Robba ring over $A_k$, and set the $\varphi$-action on $\r_{A_K}$ as the continuous extension of  $\mathrm{id}\otimes\varphi$ on $A\otimes_{\Q}\r_K$. By a \emph{ family of $\varphi$-modules} over $\r_{A_K}$ we mean a vector bundle $M_A$ over $\r_{A_K}$ equipped with a semilinear $\varphi$-action such that the natural map $\varphi^*M_A\ra M_A$ is an isomorphism. For any $x\in M(A)$, we set $M_x$, the fiber of $M_A$ at $x$, as the base change of $M_A$ to $k(x)\otimes_{\Q}\r_K$.

\begin{theorem}\label{thm:semicontinuity-intro}(Theorem \ref{thm:semicontinuity})
Let $M_A$ be a family of $\varphi$-modules over $\r_{A_K}$. Then for any $x\in
M(A)$, there is a Weierstrass subdomain $M(B)$ containing $x$ such that the HN-polygon of $M_y$ lies above the HN-polygon of
$M_x$ with the same endpoint for
any $y\in M(B)$.
\end{theorem}

If $M_x$ is pure, the above theorem then implies that the fibers of $M_A$ are also pure of the same slope around $x$. In fact, a stronger result holds if $k(x)\subset A$. Namely, $M_A$ is \emph{globally pure} around $x$.
\begin{theorem} \label{thm:pure-family-intro}(Theorem \ref{thm:pure-family})
Let $M_A$ be a family of $\varphi$-modules over $\r_{A_K}$.
Suppose that $M_x$ is pure of slope $s$ for some $x\in M(A)$ with $k(x)\subset A$, then there
exists a Weierstrass subdomain $M(B)$ containing $x$ such that the base change of $M_A$ to $M(B)$ admits a finite free $(c,d)$-pure model $N_B$ where $d>0,(c,d)=1$ and $c/d=s$. In particular, $M_B$ is globally pure of slope $s$.
\end{theorem}

Although the slope polygons are not locally constant in general, we prove that one can shrink the Weierstrass subdomain $M(B)$ in Theorem \ref{thm:semicontinuity-intro} so that the set of $y\in M(B)$ where the slope polygon of $M_y$ coincides with the slope polygon of $M_x$ is a Zariski closed subset of $M(B)$. Furthermore, we have a global slope filtration on this Zariski closed subset after base change to some \emph{extended Robba ring}. This forms our second main result.
To state the result, fix an \emph{admissible extension} $L$ of $K$ so that its residue field is \emph{strongly difference-closed}, and let $\widetilde{\r}_L$ be the extended Robba ring over $L$.

\begin{theorem}\label{thm:global-filtration-intro-RF}(Theorem \ref{thm:global-filtration} for the $\mathrm{RF}$ case)
Let $M_A$ be a family of $\varphi$-modules over $\r_{A_K}$, and let $x\in M(A)$. Then there exists a Weierstrass subdomain $M(B)$ containing $x$ such that
the set of $y\in M(B)$ where the HN-polygon of $M_y$ coincides with the HN-polygon of $M_x$
forms a Zariski closed subset $M(C)$ of $M(B)$, and
 $$\widetilde{M}_{C}=M_A\otimes_{\r_{A_K}}(C\widehat{\otimes}_{\Q}\widetilde{\r}_L)$$
admits a unique slope filtration which lifts the HN-filtration of the $\varphi$-module
\[
\widetilde{M}_x=M_x\otimes_{k(x)\otimes_{\Q}\r_K}(k(x)\otimes_{\Q}\widetilde{\r}_L).
\]
\end{theorem}

In the case when $K$ is a finite unramified extension of $\Q$, and the $\varphi$-action on $\r_K$ is an absolute Frobenius lift (this is the case for the $\varphi$-modules arising from $p$-adic Hodge theory), we can use a canonical and smaller period ring $\bt_{\rig}^\dagger$ instead of $\widetilde{\r}_L$ in the statement of Theorem \ref{thm:global-filtration-intro-RF}. More precisely, we have the following theorem which we expect to be useful for $p$-adic Hodge theory.

\begin{theorem}\label{thm:global-filtration-intro-AF}(Theorem \ref{thm:global-filtration} for the $\mathrm{AF}$ case)
Suppose that $K$ is a finite unramified extension of $\Q$, and the $\varphi$-action on $\r_K$ is an absolute Frobenius lift. Let $M_A$ be a family of $\varphi$-modules over $\r_{A_K}$, and let $x\in M(A)$. Then there exists a Weierstrass subdomain $M(B)$ containing $x$ such that
the set of $y\in M(B)$ where the HN-polygon of $M_y$ coincides with the HN-polygon of $M_x$
forms a Zariski closed subset $M(C)$ of $M(B)$, and
$$\widetilde{M}_{C}=M_A\otimes_{\r_{A_K}}(C\widehat{\otimes}_{\Q}\bt_{\rig}^\dagger)$$
admits a unique slope filtration which lifts the HN-filtration of the $\varphi$-module
\[
\widetilde{M}_x=M_x\otimes_{k(x)\otimes_{\Q}\r_K}(k(x)\otimes_{\Q}\bt_{\rig}^\dagger).
\]
\end{theorem}

One can ask similar questions for Berkovich points rather than rigid points. However, since the residue field of a general Berkovich point is not necessarily discretely valued, this requires a slope theory for Frobenius modules over the Robba ring $\r_K$ for non-discretely valued $K$. By passing to the spherical completion of $K$, we may reduce to the case that $K$ is spherically complete. In this case, it is not difficult to show the existence of Harder-Narasimhan filtrations for Frobenius modules over $\r_K$ (Theorem \ref{thm:existence-HN-filtraion}). However, we can not prove the equivalence of semistability and purity which is the key of Kedlaya's original slope theory. Another issue is that the pure locus is not necessarily open (see \cite[Remark 7.5]{KL10} for more details) which prevents the semicontinuity of variation of slope polygons in the topology of Berkovich spaces. A possible solution for this issue is to use Huber's adic spaces instead of Berkovich spaces as shown in the work of Hellmann \cite{H11}.

We now sketch the structure of the paper. In \cite{Ke05}, the slope theory for absolute Frobenius was fully developed. A large part of this theory, especially the slope filtration theorem, was then generalized to relative Frobenius in \cite{Ke06}. In \S1, we further generalize some of the results of \cite{Ke05}, especially comparisons of various slope polygons, to the relative Frobenius case. In \S1.1, we give the definitions of various base rings. In \S1.2, we prove the existence of HN-filtrations for $\varphi$-modules over $\r_K$ for spherically complete  $K$, and review the slope filtration theorem for discretely valued $K$. In \S1.3, we generalize the classical Dieudonn\'e-Manin decomposition theorem to spherically complete difference fields that have strongly difference-closed residue fields. This result is irrelevant to the main results of this paper, but may be of independent interest. We define various extended base rings in \S1.4. In \S1.5, we review the slope theory for $\varphi$-modules over the extended Robba ring $\widetilde{\r}_K$, and we prove that the HN filtrations are split when $K$ has strongly difference-closed residue field. In \S1.6, when $K$ has strongly difference-closed residue field, using the slope decomposition for $\varphi$-modules over $\widetilde{\calE}_K$, we prove the existence of reverse filtrations for $\varphi$-modules over the extended bounded Robba ring $\widetilde{\r}_K^{\bd}$. In \S1.7, we prove that the generic HN-polygon lies above the special HN-polygon with the same endpoints.

We prove our main results in \S2. In \S2.1, we first establish some basic results for various base rings with coefficients in  certain Banach algebras. Then we introduce the definition of families of $\varphi$-modules. We prove Theorem \ref{thm:pure-family-intro} in \S2.2. In \S2.3, we first prove Theorem \ref{thm:semicontinuity-intro}. We then prove Theorem \ref{thm:global-filtration-intro-RF} and Theorem \ref{thm:global-filtration-intro-AF} in a uniform way. To do this, we introduce the notation $\widetilde{\r}$ which represents $\bt_{\rig}^\dagger$ in the case that $K$ is a finite unramified extension of $\Q$ and $\varphi$ is an absolute Frobenius lift (the $\mathrm{AF}$ case), and represents $\widetilde{\r}_L$ for general $K,\varphi$ (the $\mathrm{RF}$ case). In \S2.4, we construct a family of $\varphi$-modules where the HN-polygons are not locally constant over the base.

Finally, we mention that some techniques developed in this paper will be used in subsequent work on an improvement of Kisin's construction of finite slope subspaces (\cite{L11}).

\section*{Acknowledgements}
The author is grateful to Kiran S. Kedlaya for suggesting the topic of this paper, and also for his useful comments and suggestions. In this last regard, thanks are due as well to Liang Xiao. Thanks Eugen Hellmann for intriguing conversations. Thanks Jonathan Pottharst for the comments on early drafts of this paper. Thanks Institut de Math\'ematiques de Jussieu for their kind hospitality. The author would also like to thank the anonymous referee for a very careful reading of the paper and for many useful suggestions. When writing this paper, the author was partially funded by Kedlaya's NSF CAREER grant DMS-0545904.

\begin{convention}
Throughout this paper, let $K$ be a
complete nonarchimedean valued field of mixed characteristic $(0,p)$. Let $\OO_K$ be its valuation ring.
Let $\mathfrak{m}_K$ be the maximal ideal of $\OO_K$, and let $k=\OO_K/\mathfrak{m}_K$ be the residue field.  Let $v$ denote the valuation on $\widehat{\overline{K}}$ extending the one on $K$. Let $\pi\in\mathfrak{m}_K$ satisfying $v(\pi)=1$. From $\S1.4$ on, we further assume that $K$ is discretely valued and $\pi$ is a uniformizer. In $\S1$, we set the norm on $\widehat{\overline{K}}$ as $|\cdot|=p^{-v(\cdot)}$. In $\S2$, we further assume that $K$ is a $p$-adic field, and we renormalize the norm on $K$ so that $|p|=p^{-1}$ to fit the standard normalization on $\Q$.
\end{convention}
\section{Slope theory of $\varphi$-modules}

In this section we develop the slope theory for relative Frobenius lift. Beware that in $\S1.1$ and $\S1.2$, we do not assume that $K$ is discretely valued which makes things a bit subtler.

\subsection{The base rings}
\begin{definition}\label{def:Robba-ring}
For any interval
$I\subseteq(0,\infty]$, let $\r_K^{I}$ be the ring of Laurent series $f=\sum_{i\in\mathbb{Z}}a_i T^i$ for which $a_i\in K$
and $v(a_i)+si\ra\infty$ as $i\ra\pm\infty$ for all $s\in I$. Geometrically, $\r^I_K$ is the ring of $K$-holomorphic functions on the annulus $\{T\in \overline{K}|v(T)\in I\}$.
For any $s\in I$,
the valuation $w_s$ on $\r_K^I$ is defined as
$$w_s(f)=\min_{i\in\mathbb{Z}}\{v(a_i)+si\}.$$
The corresponding multiplicative norm is $|f|_s=\max_{i\in\mathbb{Z}}\{|a_i|p^{-is}\}$.
For $I=(0,r]$, we denote $\r_K^{(0,r]}$ by $\r_K^r$ for simplicity. We call the union $\r_K=\cup_{r>0}\r_K^r$ the \emph{Robba
ring} over $K$.
\end{definition}

\begin{definition}\label{def:bounded-Robba-ring}
For any $r>0$, let $\r_K^{\bd,r}$ be the subring of $\r_K^r$
consisting of Laurent series $f=\sum_{i\in\mathbb{Z}}a_i T^i$ with $\{v(a_i)\}_{i\in\mathbb{Z}}$ bounded below. Set
\[
w(f)=\inf_{i\in\mathbb{Z}}\{v(a_i)\},
\]
and set $|f|=\sup_{i\in\ZZ}\{|a_i|\}$. Let $\r_K^{\int,r}$ be the subring of $\r_K^r$
consisting of all $f$ with $w(f)\geq0$.  Let $\r_K^\bd=\cup_{r>0}\r_K^{\bd,r}$ and $\r_K^\int=\cup_{r>0}\r_K^{\int,r}$. We call $\r_K^\bd$ the \emph{bounded Robba ring} over $K$.
\end{definition}

\begin{proposition}\label{prop:w-bounded-additive}
For any $f\in\r_K^\bd$, we have $\lim_{r\ra0^+}w_r(f)=w(f)$. As a consequence, $w$ is additive and $|\cdot|$ is multiplicative on $\r_K^\bd$.
\end{proposition}
\begin{proof}
For any $\epsilon>0$, pick $i_0$ such that $v(a_{i_0})<w(f)+\frac{\epsilon}{2}$. Let $r_0=\frac{\epsilon}{|2i_0|+1}$, and we may suppose that $f\in\r_K^{\bd,r_0}$ by shrinking $\epsilon$. It thus follows that for any $r\in(0,r_0]$, $w_r(f)\leq ri_0+v(a_{i_0})<w(f)+\epsilon$. On the other hand, choose $N\in\mathbb{N}$ sufficiently large such that $r_0i+v(a_i)\geq w(f)$ for any $i\leq-N$. Therefore for any $r\leq r_1=\min\{r_0, \frac{\epsilon}{N}\}$, if $i\leq-N$, then $ri+v(a_i)\geq r_0i+v(a_i)\geq w(f)$; if $i\geq-N$, then $ri+v(a_i)\geq w(f)-rN\geq w(f)-\epsilon$. We thus deduce that $|w_r(f)-w(f)|\leq\epsilon$ for any $r\in(0,r_1]$, yielding the desired result.
\end{proof}

\begin{definition}\label{def:calE}
Let $\calE_K$ be ring of Laurent series $f=\sum_{i\in\mathbb{Z}}a_i T^i$ for which $\{v(a_i)\}_{i\in\mathbb{Z}}$ is bounded below and $v(a_i)\ra\infty$ as $i\ra-\infty$. Set
$w(f)=\inf_{i\in\mathbb{Z}}\{v(a_i)\}$,
and set $|f|=\sup_{i\in\ZZ}\{|a_i|\}$.  Let $\OO_{\calE_K}=\{f\in\calE_K|w(f)\geq0\}$.
\end{definition}
\begin{remark}
It is clear that $\r^{\bd,r}_K$ is a subring of $\calE_K$ consisting of the series such that $v(a_i)+ri\ra\infty$  as $i\ra -\infty$. In addition, the natural inclusion $\r_K^{\bd}\rightarrow\calE_K$ is an isometry with respect to $w$ and identifies $\calE_K$ with the $w$-completion of $\r_K^{\bd}$. In particular, $w$ is a valuation on $\calE_K$, and its corresponding multiplicative norm is $|\cdot|$.
\end{remark}

\begin{remark}\label{rem:fields}
If $K$ is discretely valued, both $\r_K^\bd$ and $\calE_K$ are discretely valued fields.
\end{remark}

\begin{definition}\label{def:topology}
For any interval $I\subseteq(0,\infty]$, we equip $\r_K^I$ with the Fr\'echet topology defined by $|\cdot|_s$ for all $s\in I$, and $\r_K^I$ is complete for this topology. If $I=[r_1,r_2]$ is a closed interval, $\r_K^I$ becomes a $K$-Banach algebra with the norm $\max\{|\cdot|_{r_1},|\cdot|_{r_2}\}$. We equip  $\r_K=\cup_{r>0}\r_K^r$ with the locally convex inductive limit topology  (in the sense of \cite[\S II.4]{bourbaki-evt}). In particular, a sequence converges in $\r_K$ if and only if it is a convergent sequence in
$\r_K^r$ for some $r>0$. For any $r>0$, we equip $\r_K^{\bd,r}$ with the norm $\max\{|\cdot|,|\cdot|_r\}$ under which it is a $K$-Banach algebra. The topology defined by this norm is the weakest topology so that the natural maps $\r_K^{\bd,r}\ra\r_K^r$ and $\r_K^{\bd,r}\ra\calE_K$ are continuous. We equip $\r_K^\bd=\cup_{r>0}\r_K^{\bd,r}$ with the locally convex inductive limit topology.
\end{definition}

\begin{proposition}\label{prop:units}
$\r_K^\times=(\r_K^{\bd})^\times$. In particular, if $K$ is discretely valued, the units of $\r_K$ are precisely the nonzero elements of $\r_K^\bd$.
\end{proposition}
\begin{proof}
Note that for any $f=\sum_{i\in\mathbb{Z}}a_iT^i\in\r_K^r$, $s\mapsto w_s(f)$ is a concave function on $(0,r]$. Suppose that $f$ is a unit in $\r_K^r$ with inverse $g$. It follows that the sum of two concave functions $w_s(f),w_s(g)$ is the constant function $0$. Thus both $w_s(f),w_s(g)$ are affine in $s$. Hence $v(a_i)=\lim_{s\ra0^+}(v(a_i)+si)\geq\lim_{s\ra0^+}w_s(f)$ for any $a_i$, yielding that $f\in\r_K^{\bd,r}$.
\end{proof}

\subsection{$\varphi$-modules over the Robba ring}

\begin{definition}\label{def:Frob-lift}
Fix an integer $q>1$. A \emph{relative (q-power) Frobenius lift} on
the Robba ring $\r_K$ is a homomorphism
$\varphi:\r_K\ra\r_K$ of
the form
\begin{center}
$\sum_{i\in\mathbb{Z}}a_i
T^i\mapsto\sum_{i\in\mathbb{Z}}\varphi_K(a_i) S^i,$
\end{center}
 where
$\varphi_K$ is an isometric endomorphism on $K$
and $S$ lies in $\r_K^{\int}$ satisfying $w(S-T^q)>0$.
If $q$ is a power of $p$, we define an \emph{absolute (q-power) Frobenius lift}
as a relative Frobenius lift for which $\varphi_K$ is a $q$-power
Frobenius lift.
\end{definition}

\begin{remark}\label{rem:phi-r-to-qr}
Note that $w(T^{-q}(S-T^q))>0$. Thus by Proposition \ref{prop:w-bounded-additive}, we have $w_r(T^{-q}(S-T^q))>0$ for $r$ sufficiently small. This yields $w_r(S)=w_r(T^q)=qr$; hence $\varphi$ maps $\r_K^r$ to $\r_K^{qr}$ for $r$ sufficiently small.
\end{remark}


Henceforth we fix a relative Frobenius lift $\varphi$ on $\r_K$ such that $\varphi_K$ is an automorphism. From Definition \ref{def:Frob-lift}, it is clear that $\varphi$ restricts to an isometry on $\r_K^\bd$ with respect to $|\cdot|$. Hence $\varphi$ extends to an automorphism on $\calE_K$ by continuity which we again denote by $\varphi$.

\begin{definition}\label{def:difference-ring}
A \emph{difference algebra/field} is an algebra/field $R$ equipped with an endomorphism $\varphi$. We say that $R$ is \emph{inversive} if $\varphi$ is an automorphism. A \emph{difference module} over $R$ is a finite free $R$-module $M$ equipped with an $R$-linear map $\varphi^{*}M\ra M$, which we also think of as a semilinear
action $\varphi$ on $M$; the semilinearity means that for $r\in R$ and
$m\in M$, $\varphi(rm)=\varphi(r)\varphi(m)$. We say that $M$ is \emph{dualizable} if $\varphi^*M\ra M$ is an isomorphism. By a \emph{$\varphi$-module} over $R$ we mean a dualizable difference module over $R$.
\end{definition}

\begin{definition}
For any $\varphi$-module $M$ over a difference algebra $R$, we define $\mathrm{H}^0(M)$ and $\mathrm{H}^1(M)$ by setting $\mathrm{H}^{0}(M)=M^{\varphi=1}$ and $\mathrm{H}^1(M)=\frac{M}{(\varphi-1)M}$ respectively. It is clear that $\mathrm{H}^{1}(M)$ classifies the extensions of the trivial $\varphi$-module $R$ by $M$ in the category of $\varphi$-modules over $R$.
\end{definition}

\begin{definition}\label{def:phi-module-slope}
For any $R\in\{\calE_K,\r_K^\bd,\r_K\}$, if $M$ is a $\varphi$-module over $R$ of rank
$n>0$, let $v$ be a generator of $\wedge^n
M$, and suppose $\varphi(v)=\lambda v$ for some $\lambda\in R^\times$. It follows from Proposition \ref{prop:units} that $R^\times\subseteq\calE_K^\times$.
We then define the \emph{degree} of $M$ by setting $\deg(M)=-w(\lambda)$ which is independent of the choice of $v$ because $\varphi$ is an isometry on $\calE_K$, and we define the
\emph{slope} of $M$ by setting $\mu(M)=\deg(M)/\rank(M)$.
\end{definition}

\begin{remark}\label{rem:sign-convention}
The sign convention used here for degrees of $\varphi$-modules is opposite to that used in the previous work of Kedlaya \cite{Ke04, Ke05, Ke06}. We change it here to match the sign convention used in the coming work \cite{KL11} which matches the sign convention used in geometric invariant theory, in which the ample line bundle $\mathcal{O}(1)$ on any projective space has degree $1$.
\end{remark}
\begin{definition}\label{def:r(c)}
For any difference algebra $R$ over $K$ and any $n\in\mathbb{Z}$, define the rank $1$ $\varphi$-module \emph{$R(n)$} by setting the $\varphi$-action as
\begin{equation}\label{eq:twist-def}
\varphi(rv)=\pi^{-n}\varphi(r)v, \quad r\in R
\end{equation}
for some generator $v$.
For any $\varphi$-module $M$ over $R$, set the $\varphi$-module $M(n)=M\otimes_RR(n)$.
\end{definition}

\begin{lemma}\label{lemma:degree-q}
There exists an $r_\varphi>0$ such that for any $a\in \widehat{\overline{K}}$ with $0<v(a)<r_{\varphi}$, the equation $\varphi(T)=a$ has $q$ roots (with multiplicity) in $\widehat{\overline{K}}$. Furthermore, each of the roots has valuation $v(a)/q$.
\end{lemma}
\begin{proof}
Note that the conditions $\varphi(T)\in\r_K^\int$ and $w(\varphi(T)-T^q)>0$ imply that the Newton polygon for $\varphi(T)$ has a minimal positive slope $r_0$. It therefore follows that if $0<v(a)<qr_0$, the Newton polygon for $\varphi(T)-a$ has $v(a)/q$ as the minimal positive slope with multiplicity $q$. Hence by the theory of Newton-polygons, the equation $\varphi(T)=a$ has $q$ roots (with multiplicity), and each of the roots has valuation $v(a)/q$. Therefore we can choose $r_\varphi$ to be $qr_0$.
\end{proof}

\begin{lemma}\label{lemma:Robba-to-bounded}
Let $f$ be a nonzero element of $\r_K$. If $\varphi(f)=\lambda f$ for some $\lambda\in\r^{\times}_K$ with $w(\lambda)\leq0$, then
$f\in\r^{\times}_K$.
\end{lemma}

\begin{proof}
We first get that $f\in\r_K^\bd$ by \cite[Proposition 1.2.6]{Ke06} (although it is proved under the
hypothesis that $K$ is discretely valued, the proof works in our situation).
For any $g\in\r_K^{\bd,r}$, it follows from \cite[Lemma 8.2.6(c)]{Ke07} that $g\in(\r_K^{\bd,r})^\times$ if and only if its Newton polygon has no slopes in $[0,r]$. Now suppose that the contrary of the lemma is true. We may choose some $0<r_0<r_\varphi$ so that $f$ has a root of valuation $r_0$ and $\lambda\in(\r_K^{\bd,r_0})^\times$. We then deduce from the equality $\varphi(f)=\lambda f$ and Proposition \ref{lemma:degree-q} that $f$ has at least $q$ roots with valuation $r_0/q$. Iterating this argument, for any $n\in\mathbb{N}$, we get that $f$ has at least $q^n$ roots (with multiplicity) with valuation $r_0/q^n$. Since $\sum_{n\in\mathbb{N}}q^n\times(r_0/q^n)=\infty$, we get that the sum of the slopes of $f$ in $[0,r_0]$ is not finite. However, since $f$ is a nonzero element of $\r_K^\bd$, the sum of its slopes in $[0,r_0]$ is finite.
This yields a
contradiction.
\end{proof}

\begin{definition}\label{def:semistable}
Let $R\in\{\calE_K,\r_K\}$, and let $M$ be a nonzero $\varphi$-module over $R$. We say that $M$ is \emph{semistable} if $\mu(N)\leq\mu(M)$ for any nonzero
$\varphi$-submodule $N$. We say that $M$ is
\emph{stable} if $\mu(N)<\mu(M)$ for any proper nonzero $\varphi$-submodule
$N$.
\end{definition}

\begin{proposition}\label{prop:rank1-stable}
Any rank $1$ $\varphi$-module $M$ over $\r_K$ is
stable.
\end{proposition}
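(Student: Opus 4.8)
The plan is to show that a rank $1$ $\phi$-module $M$ over $\r_K$ has no nonzero proper $\phi$-submodules at all, which trivially forces stability. So suppose $N\subseteq M$ is a nonzero $\phi$-submodule; I want to conclude $N=M$. Pick a generator $v$ of $M$ with $\phi(v)=\lambda v$, $\lambda\in\r_K^{\bd}$; since $N$ is a nonzero $\phi$-stable $\r_K$-submodule of the rank $1$ module $\r_K v$, we may write $N=I v$ for a nonzero $\r_K$-submodule (in fact a fractional-ideal-like subset) of $\r_K$, and pick a nonzero element $a v\in N$ with $a\in\r_K$. The $\phi$-stability of $N$ gives $\phi(a)\lambda v=\phi(av)\in N$, so the ratio of $\phi(av)$ to $av$, namely $\mu:=\phi(a)\lambda/a$, must lie in $\r_K$ if $N$ is to be a $\phi$-module over $\r_K$ in the strict sense — but to keep things clean I instead argue directly with $a$ as follows.

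First I would reduce to showing that any such $a$ is actually a unit in $\r_K$. Observe that $\deg M = w(\lambda)$ and, replacing $M$ by a twist $M(c)$ (which changes neither the property of being rank $1$ nor of being stable, by the remark following Definition on twisting and by formal property (4)), I may assume $w(\lambda)=0$, i.e. $\lambda\in\r_K^{\bd}$ with $w(\lambda)=0$. The point is now: $av$ generates a rank $1$ $\phi$-submodule, and comparing top exterior powers (both $M$ and the submodule generated by $av$ are rank $1$), I get that $\phi(av)=(\phi(a)\lambda/a)\,(av)$, and for this submodule to be $\phi$-stable inside $M$ I need $\phi(a)\lambda a^{-1}\in\r_K$; but more usefully, running the argument the other way, if $N\subsetneq M$ then $a$ is a nonunit in $\r_K$, so $a$ has infinitely many roots $c_1,c_2,\dots$ in the open unit disk with $v_K(c_i)\to 0$.

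The key step is then exactly the argument already carried out in the proof of the Lemma above (the one showing that $\phi(a)=\lambda a$ with $w(\lambda)\le 0$, $\lambda\in\r_K^\times$, forces $a\in\r_K^\times$): apply that Lemma directly. Indeed, with the normalization $w(\lambda)=0$ we have $\lambda\in\r_K^{\bd}$, and since the units of $\r_K$ are contained in $\r_K^{\bd}$, I should first check $\lambda\in\r_K^\times$; this holds because $\lambda$ is the Frobenius eigenvalue on $\bigwedge^1 M$, hence invertible (its inverse being the eigenvalue on the dual, which lies in $\r_K^{\bd}$ by property (3), and a bounded element whose inverse is bounded and which has no roots near the boundary is a unit — alternatively invoke \cite[Proposition 1.2.6]{Ke06} as in the Lemma's proof). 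Then the Lemma gives $a\in\r_K^\times$, so $N=\r_K\cdot av=\r_K v=M$, contradicting $N\subsetneq M$. Hence $M$ has no proper nonzero $\phi$-submodule, so vacuously $\mu(N)>\mu(M)$ for every proper nonzero $\phi$-submodule $N$, i.e. $M$ is stable.

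The main obstacle I anticipate is the bookkeeping around the eigenvalue $\lambda$: making sure that after twisting I genuinely land in the hypotheses of the Lemma ($w(\lambda)\le 0$ and $\lambda\in\r_K^\times$), and correctly identifying the Frobenius eigenvalue of the rank $1$ submodule generated by $av$ in terms of $a$ and $\lambda$ so that the Lemma applies to $a$ itself rather than to some modified element. Everything else — that a nonunit of $\r_K$ has boundary-accumulating roots, and the root-counting contradiction — is precisely what the preceding Lemma supplies, so the proof is essentially a one-line appeal to it once the setup is arranged.
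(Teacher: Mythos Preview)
Your stated plan --- show that a rank $1$ $\phi$-module has \emph{no} nonzero proper $\phi$-submodules --- is false, and this is exactly where the argument breaks. Take $M=\r_K$ with the standard $\phi$-action and $a=t=\log(1+T)$ (in the $(\phi,\Gamma)$-module setting, or any nonunit $a$ with $\phi(a)=\lambda' a$ for a unit $\lambda'$ of positive $w$): then $N=\r_K a$ is a genuine proper $\phi$-submodule with $\mu(N)=w(\lambda')>0=\mu(M)$. So proper $\phi$-submodules do exist; stability only says they have strictly larger slope.

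The concrete gap in your argument is the application of the Lemma. After twisting so that $\phi(v)=\lambda v$ with $w(\lambda)=0$, you want $\phi(a)=\lambda' a$ with $w(\lambda')\le 0$ in order to invoke the Lemma and conclude $a\in\r_K^\times$. But the multiplier in that equation is not $\lambda$: from $\phi(av)=\phi(a)\lambda v$ and $\phi(av)=\mu\cdot(av)$ (where $\mu$ is the eigenvalue on the generator of $N$) one gets $\phi(a)=(\mu/\lambda)a$, so $\lambda'=\mu/\lambda$ and $w(\lambda')=w(\mu)-w(\lambda)=\mu(N)-\mu(M)$. Thus the Lemma's hypothesis $w(\lambda')\le 0$ is \emph{precisely} the inequality $\mu(N)\le\mu(M)$, not something that comes for free from normalizing $\lambda$. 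You anticipated bookkeeping trouble here, and this is it: the twist on $M$ controls $\lambda$, not $\lambda'$.

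Once you make this correction --- assume $\mu(N)\le\mu(M)$, then apply the Lemma to force $a\in\r_K^\times$, hence $N=M$ --- you have exactly the paper's proof. (The paper phrases the reduction as twisting by an arbitrary rank $1$ module, namely $M^\vee$, to land on the trivial module $\r_K$; your twist by $\r_K(c)$ alone does not reach the trivial module in general, but this is harmless since the computation of $\lambda'$ above works without that reduction.)
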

\begin{proof}
By tensoring $M$ with $M^\vee=\mathrm{Hom}_{\r_K}(M,\r_K)$, it reduces to prove the proposition for $M\cong\r_K$.
Now suppose that $N\subseteq\r_K$ is a nonzero $\varphi$-submodule so that $\mu(N)\geq\mu(\r_K)=0$. Choose a generator $f$ of $N$, and write $\varphi(f)=\lambda f$ for some $\lambda\in\r_K^{\times}$; then $w(\lambda)\leq0$ since $\mu(N)\geq0$. It thus follows from Lemma \ref{lemma:Robba-to-bounded} that
$f$ is invertible in $\r_K$, yielding $N=\r_K$. In other words, $\mu(N)<\mu(\r_K)$ unless $N=\r_K$, as
desired.
\end{proof}
\begin{corollary}\label{cor:stable}
Suppose that $N\subseteq M$ are two $\varphi$-modules over $\r_K$ of the
same rank; then $\mu(N)\leq\mu(M)$, with equality if and only if
$N=M$.
\end{corollary}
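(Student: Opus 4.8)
The plan is to reduce to the rank one case established by the preceding proposition by passing to top exterior powers. Put $n=\rank(M)=\rank(N)$; we may assume $n\geq 1$, the case $n=0$ being vacuous. Choose an $\r_K$-basis $e_1,\dots,e_n$ of $M$ and an $\r_K$-basis $f_1,\dots,f_n$ of $N$, and let $A\in M_n(\r_K)$ be the matrix determined by $f_j=\sum_i A_{ij}e_i$ (which makes sense because $N\subseteq M$). Since $\r_K$ is an integral domain and $N\hookrightarrow M$ is an injection between free modules of the same rank, $\det(A)\neq 0$. The inclusion $N\hookrightarrow M$ is a morphism of $\phi$-modules, hence so is the induced map $\bigwedge^n N\to\bigwedge^n M$; on the chosen generators it sends $f_1\wedge\cdots\wedge f_n$ to $\det(A)\,(e_1\wedge\cdots\wedge e_n)$. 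Therefore $\bigwedge^n N$ is identified with the rank one $\phi$-submodule $\det(A)\cdot\bigwedge^n M$ of $\bigwedge^n M$.

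Next I would apply the preceding proposition (any rank one $\phi$-module over $\r_K$ is stable) to $\bigwedge^n M$ together with its nonzero $\phi$-submodule $\bigwedge^n N$. This yields $\mu(\bigwedge^n N)\geq\mu(\bigwedge^n M)$, with strict inequality unless $\bigwedge^n N=\bigwedge^n M$. Since the degree of a $\phi$-module was defined to be the degree of its top exterior power, one has $\mu(\bigwedge^n N)=\deg(\bigwedge^n N)=\deg(N)=n\,\mu(N)$ and similarly $\mu(\bigwedge^n M)=n\,\mu(M)$; dividing by $n$ gives $\mu(N)\geq\mu(M)$.

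For the equality clause: $\mu(N)=\mu(M)$ forces $\bigwedge^n N=\bigwedge^n M$, i.e.\ $\det(A)\in\r_K^{\times}$, and then the adjugate identity $A^{-1}=\det(A)^{-1}\,\mathrm{adj}(A)$ over the commutative ring $\r_K$ shows $A\in\mathrm{GL}_n(\r_K)$, so $f_1,\dots,f_n$ is an $\r_K$-basis of $M$ and $N=M$; the converse is trivial. I do not expect a genuine obstacle here, as this is just the standard ``take determinants'' reduction to rank one. The only auxiliary facts used, beyond the preceding proposition, are that $\r_K$ is an integral domain (so that $\bigwedge^n$ applied to an injection of free modules of equal rank is again injective, and $\det(A)\cdot\bigwedge^n M$ is genuinely free of rank one) and that a square matrix over a commutative ring with invertible determinant is invertible; both are immediate.
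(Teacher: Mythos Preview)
Your proposal is correct and takes essentially the same approach as the paper: reduce to rank one by passing to $\bigwedge^n$, then invoke the preceding proposition that rank one $\phi$-modules are stable. The paper's proof is the one-line version of exactly what you wrote, so your added details (the matrix $A$, the adjugate argument for the equality case, and the integrality of $\r_K$) simply flesh out what the paper leaves implicit.
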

\begin{proof}
Suppose $\rank M=n$. Then apply the above proposition to
$\wedge^n N\subseteq\wedge^n M$.
\end{proof}

\begin{definition}\label{def:semistable-filtration}
Let $R\in\{\calE_K,\r_K\}$. For any nonzero $\varphi$-module $M$ over $R$, a
\emph{semistable filtration} of $M$ is a filtration $0=M_0\subset
M_1\cdots\subset M_l=M$ of $M$ by saturated $\varphi$-submodules, such
that each successive quotient $M_i/M_{i-1}$ is a semistable
$\varphi$-module of some slope $s_i$.  The \emph{slope
multiset} of a semistable filtration of $M$ is the multiset in which
each slope of a successive quotient occurs with multiplicity equal
to the rank of that quotient, and we call the associated Newton polygon of the slope multiset (see \cite[Definition 3.5.1]{Ke05}) the \emph{slope polygon} of
this filtration.
\end{definition}

\begin{proposition}\label{prop:maximal-sub}
If $K$ is spherically complete, then every
nonzero $\varphi$-module $M$ over $\r_K$ has a unique maximal $\varphi$-submodule which has the maximal slope. Furthermore, it is semistable and saturated.
\end{proposition}
\begin{proof}
The spherical completeness of $K$ implies that $\r_K$ is a B\'ezout domain by \cite[Th\'eor\`{e}me 2]{L62}. Hence if $N$ is a finite $\r_K$-submodule of $M$, both $N$ and its saturation are finite free $\r_K$-modules. It follows that the saturation of any $\varphi$-submodule of
$M$ and the sum of any two $\varphi$-submodules of $M$ are still $\varphi$-submodules of $M$.

We proceed by induction on the rank of $M$. The initial case follows from Proposition \ref{prop:rank1-stable}. Now suppose that $\rank M=d$ for some $d\geq2$  and the proposition is
true for $\varphi$-modules having rank $\leq d-1$. Let $\mu(M)=s$. If $M$
is semistable, then we are done. Otherwise, let $P$ be a
$\varphi$-submodule of slope $>s$ and of maximal rank. By Corollary \ref{cor:stable}, the saturation $\widetilde{P}$ of $P$ satisfies
$\mu(\widetilde{P})\geq\mu(P)$. Replacing $P$ with $\widetilde{P}$, we may suppose that $P$ is saturated. Hence $\rank P\leq d-1$, otherwise we must have $P=M$, yielding $\mu(M)=\mu(P)>s$ which is a
contradiction. By
inductive assumption, $P$ has a unique maximal $\varphi$-submodule
$P_1$ which has the maximal slope. We claim that $P_1$ is also the unique
maximal $\varphi$-submodule of $M$ which has the maximal slope. Suppose that the contrary of the claim is true. Let $Q$ be a $\varphi$-submodule of $M$ so that either $\mu(Q)>\mu(P_1)$ or $\mu(Q)=\mu(P_1)$ and $Q\nsubseteq P_1$; then $\mu(Q)\geq
\mu(P_1)$ and $Q\nsubseteq P$. Consider the following exact
sequence of $\varphi$-submodules
\[
0\longrightarrow P\cap Q\longrightarrow P\oplus Q\longrightarrow
P+Q\longrightarrow 0.
\]
Since $\mu(P\cap Q)\leq\mu(P_1)\leq\mu(Q)$, we get
\begin{eqnarray*}
\deg(P+Q)&=&\rank(P)\mu(P)+\rank(Q)\mu(Q)-
\rank(P\cap Q)\mu(P\cap Q)\\
&\geq&\rank(P)\mu(P)+(\rank(Q)-\rank(P\cap Q))\mu(Q)\\
&\geq&(\rank(P)+\rank(Q)-\rank(P\cap Q))\mu(P)\\
&=&\rank(P+Q)\mu(P);
\end{eqnarray*}
hence $\mu(P+Q)\geq\mu(P)>s$. However, since $P$ is saturated and $Q\nsubseteq P$, it follows that $\rank(Q+P)>\rank P$ which contradicts the maximality of $\rank P$. This yields the claim which finishes the inductive step.

Now let $P\subseteq M$ be the unique maximal $\varphi$-submodule which has the maximal slope. The saturation $\widetilde{P}$ of $P$ satisfies $\mu(\widetilde{P})\geq P$. This forces $\widetilde{P}=P$ by the maximality of $P$. Hence $P$ is saturated. The semistability of $P$ follows directly.
\end{proof}

\begin{theorem}\label{thm:existence-HN-filtraion}
If $K$ is spherically complete, then every $\varphi$-module $M$ over $\r_K$
admits a unique HN filtration.
\end{theorem}
\begin{proof}
The uniqueness follows from formal properties of slopes and Corollary \ref{cor:stable}. In fact, by
the definition of HN filtration, for any $i\geq1$, $M_i$ can be
characterized as the preimage of the unique maximal $\varphi$-submodule of $M/M_{i-1}$
which has the maximal slope. This also suggests the way of showing the existence. We take $M_1$ to be the maximal $\varphi$-submodule of $M$ which has the maximal slope. Since $M_1$ is saturated by Proposition \ref{prop:maximal-sub}, $M/M_1$ is a $\varphi$-module over $\r_K$. Then we take $M_2$ to be the preimage of the maximal $\varphi$-submodule of $M/M_1$ which has the maximal slope. Iterating this process, we get the HN filtration of $M$.
\end{proof}

\begin{definition}
Suppose that $K$ is spherically complete. For any $\varphi$-module $M$ over $\r_K$, the slopes $s_i$'s of the HN filtration are called the \emph{slopes} of $M$ and the slope polygon of the HN filtration is called the \emph{HN-polygon} of $M$.
\end{definition}

\begin{proposition}\label{prop:HN-below}
Suppose that $K$ is spherically complete. The for any $\varphi$-module $M$ over $\r_K$, the HN-polygon of $M$ lies above the slope polygon of any semistable filtration of $M$, with the same endpoint.
\end{proposition}
\begin{proof}
This is a formal consequence of the definition of HN filtration. We
refer to \cite[Proposition 3.5.4]{Ke05} for a proof. Beware that both our sign convention of slopes and definition of slope polygons are ``opposite" to that used in \cite{Ke05}.
\end{proof}

\begin{definition}\label{def:pure}
Let $M$ be a $\varphi$-module over $\calE_K$ (resp. $\r_K^{\bd}$). For $c,d\in\mathbb{Z}$ with $d>0$,  a \emph{$(c,d)$-pure model} of $M$ is a finite free $\OO_{\calE_K}$-submodule (resp. $\r_K^{\int}$-submodule) $M_0$ of $M$ with $M_0\otimes_{\OO_{\calE_K}}\calE_K=M$ (resp. $M_0\otimes_{\r^{\int}_K}\r_K^\bd=M$) so that the $\varphi$-action on $M$ induces an isomorphism $\pi^{c}(\varphi^d)^*M_0\cong M_0$. For a $\varphi$-module $M$ over $\r_K$, a \emph{$(c,d)$-pure model} of $M$ is a $\r_K^{\int}$-submodule $M_0$ with $M_0\otimes_{\r_K^\int}\r_K=M$ so that $M_0\otimes_{\r_K^\int}\r_K^\bd$ is stable under $\varphi$ and the $\varphi$-action induces an isomorphism $\pi^{c}(\varphi^d)^*M_0\cong M_0$.
For $s\in\mathbb{Q}$, we say that $M$ is \emph{pure of slope $s$} if $M$ admits a $(c,d)$-pure model for some (hence any) $c,d\in\mathbb{Z}$ with $d>0$ and $s=c/d$. If $s=0$, we also say that $M$ is \emph{\'etale}, and a $(0,1)$-pure model is also called an \emph{\'etale model}.
\end{definition}

\begin{proposition}\label{prop:pure-semistable}
If $M$ is a pure $\varphi$-module over $\r_K$, then $M$ is
semistable.
\end{proposition}
\begin{proof}
We follows the proof of \cite[Theorem 1.6.10(a)]{Ke06}. Suppose that $M$ admits a $\varphi$-submodule $N$ such that $\mu(N)>\mu(M)$. By replacing $M$ with $\wedge^{\rank N}M$, we may assume that $\rank N=1$. By twisting, we may further assume that $N$ is trivial. Hence $\mathrm{H}^0(N)\neq0$. Choose a nonzero $\varphi$-invariant vector $v\in N$. By replacing $\varphi$ with $\varphi^a$ for a suitable positive integer $a$, we may assume that $\mu(M)=n\in\mathbb{Z}_{<0}$. We choose a $(n,1)$-pure model $M_0$ of $M$. Let $\mathrm{e}=\{e_1,\dots,e_m\}$ be a basis of $M_0$, and write $\varphi(e_i)=\sum_{j=1}^me_jF_{ji}$ for $F_{ji}\in\r^\bd_K$. By the definition of pure models, we see that $w(F_{ji})\geq -n$ for all $j,i$.  By \cite[Proposition 1.5.4]{Ke06} (this proposition ultimately relies on \cite[Proposition 1.2.6]{Ke06} whose proof works for general $K$), we get that $v\in M_0\otimes_{\r^\int_K}\r^\bd_K$. Write $v=\sum_{i=1}^mc_{i}e_i$; then $\varphi(v)=v$ implies $c_i=\sum_{i=1}^mF_{ij}\varphi(c_j)$. This yields $\min_i\{w(c_i)\}\geq -n+\min_j\{w(c_j)\}$ which is a contradiction.
\end{proof}

The converse of Proposition \ref{prop:pure-semistable} is more difficult. It is only known for discretely valued $K$ thanks to the following slope filtration theorem of Kedlaya \cite[Theorem 1.7.1]{Ke06}.

\begin{theorem}\label{thm:slope-filtration}
If $K$ is discretely valued, then every semistable $\varphi$-module
over $\r_K$ is pure. In particular, every $\varphi$-module $M$ over
$\r_K$ admits a unique filtration $0=M_0\subset M_1\subset\cdots\subset
M_l=M$ by saturated $\varphi$-submodules whose successive quotients are
pure with $\mu(M_1/M_0)>\cdots>\mu(M_{l}/M_{l-1})$.
\end{theorem}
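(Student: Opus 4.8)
The plan is to deduce the filtration statement from the first assertion and to prove the first assertion by descent from an enlarged coefficient ring. Since a field complete for a discrete valuation is spherically complete, the HN-filtration theorem already proved equips $M$ with a unique filtration by saturated $\phi$-submodules whose graded pieces are semistable with strictly increasing slopes; once we know that every semistable $\phi$-module is pure, this filtration — being a semistable filtration with strictly increasing slopes and now with pure quotients — is exactly what the ``consequence'' demands, and its uniqueness is the uniqueness of the HN filtration. Hence it suffices to show that a semistable $\phi$-module over $\r_K$ is pure. By the formal properties $(1)$--$(6)$ of degree and slope, together with the facts that $[d]_{*}$ and twisting by rank-$1$ modules both preserve semistability, the statement ``$M$ semistable of slope $c/d$ is pure'' unwinds, by the very definition of pure, to ``$([d]_{*}M)\otimes N$ is étale'' for a suitable rank-$1$ $\phi^{d}$-module $N$ of degree $-c$; this last module is semistable of slope $0$ over $\r_K$ (as a $\phi^{d}$-module, and the whole slope theory applies verbatim with $\phi^{d}$ in place of $\phi$). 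Thus the entire theorem reduces to: \emph{a semistable $\phi$-module $M$ of slope $0$ over $\r_K$ is étale}, i.e.\ admits a $\phi$-stable $\r_K^{\int}$-lattice $M'$ with $\phi^{*}M'\cong M'$.

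Next I would pass to the extended Robba ring $\widetilde{\r}_K$: the analogue of $\r_K$ whose coefficients are enlarged so that the relevant residue ring has algebraically (indeed difference-) closed residue field, equipped with a Frobenius compatible with that of $\r_K$ and with the seminorms $|\cdot|_{s}$. Over $\widetilde{\r}_K$ one has the Dieudonné--Manin classification: every $\phi$-module over $\widetilde{\r}_K$ is a finite direct sum of standard pure modules, and the multiset of slopes is an isomorphism invariant; in particular a $\phi$-module over $\widetilde{\r}_K$ that is semistable of slope $0$ is free, isomorphic to $\widetilde{\r}_K^{\,n}$. Because a $\phi$-submodule over $\r_K$ base-changes to a $\phi$-submodule over $\widetilde{\r}_K$ of the same rank and degree, the easy half of the comparison of slope polygons holds: the polygon of $M\otimes_{\r_K}\widetilde{\r}_K$ lies on or below that of $M$, with the same endpoints. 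What one must add is the genuinely nontrivial \emph{descent} input: a $\phi$-submodule of $M\otimes_{\r_K}\widetilde{\r}_K$ of the lowest HN slope is already defined over $\r_K$. Granting this, if $M$ is semistable of slope $0$ over $\r_K$ then $M\otimes\widetilde{\r}_K$ can have no $\phi$-submodule of negative slope — it would descend and contradict semistability — so all its slopes are $0$ and $M\otimes\widetilde{\r}_K\cong\widetilde{\r}_K^{\,n}$; descending the standard étale lattice of $\widetilde{\r}_K^{\,n}$ by the same decompletion technique then yields the required étale structure on $M$ over $\r_K$.

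The heart of the proof, and the step I expect to be the main obstacle, is precisely this descent of a $\phi$-submodule (equivalently, of an étale model) from $\widetilde{\r}_K$ to $\r_K$. The mechanism is a decompletion by successive approximation: starting from a basis with Frobenius matrix $F\in\mathrm{GL}_{n}(\r_K^{\bd})$ and a trivializing matrix $U\in\mathrm{GL}_{n}(\widetilde{\r}_K)$ with $U^{-1}F\,\phi(U)=1$, one corrects the basis over $\r_K$ step by step, at each stage solving an equation of the shape $X-F\,\phi(X)\,F^{-1}\equiv(\text{current error})$ modulo higher order terms, and the real difficulty is to bound the corrections simultaneously in all the Fréchet seminorms $|\cdot|_{s}$ on the annuli $v_K(T)\in(0,r]$, so that the sequence converges inside $\r_K^{\int}$ rather than drifting into larger rings; the behaviour as $s\to 0^{+}$ is especially delicate and is handled using that $\phi$ multiplies valuations by $q$ and has a well-behaved inverse on a slightly smaller annulus (the constant $r_{0}$ fixed above). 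It is here that the discreteness of $v_K$ is indispensable, through Lazard's theorem — so that $\r_K^{\bd}$ is a field, $\r_K^{\int}$ is Henselian local with residue field $k((T))$, and the Newton-polygon accounting of roots is finite — and through the resulting finiteness and partial vanishing of the relevant $\phi$-cohomology; this is the substance of Kedlaya's argument in \cite{Ke06}. Once the étale lattice descends, the first assertion is proved; the ``consequence'' then follows by taking $M_{1}$ to be the maximal $\phi$-submodule of minimal slope furnished above — pure by the case just proved — applying the same to $M/M_{1}$ and iterating, uniqueness being the extremal characterization of each $M_{i}$.
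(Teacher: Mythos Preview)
The paper does not prove this theorem: it is stated as a quotation of Kedlaya's result, introduced by ``We know this for discrete $v_K$ thanks to the following theorem of Kedlaya \cite{Ke06},'' and no proof is given. Your proposal is therefore not comparable to a proof in the paper; rather, you have sketched (accurately, in broad strokes) the architecture of Kedlaya's argument in \cite{Ke05,Ke06} --- reduction to slope $0$ by pushforward and twist, Dieudonn\'e--Manin over the extended Robba ring $\widetilde{\r}_K$, and descent of the \'etale structure back to $\r_K$ --- whereas the paper simply cites that reference and moves on.

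If you want the sketch to stand on its own, the place that needs real content is exactly the one you flag: the descent from $\widetilde{\r}_K$ to $\r_K$. Your description (``decompletion by successive approximation'' bounding all Fr\'echet seminorms) is the right shape, but in Kedlaya's actual proof this is not a single approximation lemma; it passes through several intermediate rings, a careful analysis of $\phi$-invariants, and comparison of the generic and special polygons. As written, your sketch asserts the existence of this machinery rather than supplying it, so it would not be accepted as a self-contained proof --- which is precisely why the paper, whose focus is on families, is content to cite \cite{Ke06} for this input.
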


The following propositions will be used later.
\begin{proposition}\label{prop:model-unique}
Suppose that $K$ is discretely valued, and that $M$ is a pure $\varphi$-module over $\r_K$.
If $M_1$ and $M_2$ are two pure models of $M$, then $M_1\otimes_{\r^\int_K}\r_K^\bd=M_2\otimes_{\r^\int_K}\r_K^\bd$.
\end{proposition}
\begin{proof}
This follows from \cite[Proposition 1.5.5]{Ke06}.
\end{proof}

\begin{proposition}\label{prop:wedge}
Suppose that $K$ is discretely valued, and let $M$ be a $\varphi$-module over $\r_K$. The following are true.
\begin{enumerate}
\item[(1)]Let $a$ be a positive integer. Then $M$ is semistable of slope $s$ if and only if it is semistable of slope $as$ as a $\varphi^a$-module.
\item[(2)]Suppose that $M$ has slopes $s_1\geq\cdots\geq s_n$ counted with multiplicity. Then for any $1\leq d\leq n$, the slope multiset of $\wedge^d M$ is $\{s_{i_1}+\cdots+s_{i_d}|1\leq i_1<\cdots<i_d\leq n\}$.
\end{enumerate}
\end{proposition}
\begin{proof}
For $(1)$, it suffices to show that $M$ is pure of slope $s$ if and only if it is pure of slope $as$ as a $\varphi^a$-module; this is \cite[Lemma 1.6.3]{Ke06}. For $(2)$, see \cite[Remark 1.7.2]{Ke06}.
\end{proof}

It is clear that the purity of $\varphi$-modules is preserved by tensor products. Hence for discretely valued $K$, it follows from Theorem \ref{thm:slope-filtration} that the semistability of $\varphi$-modules over $\r_K$ is also preserved by tensor products.

\begin{question}
 Suppose that $K$ is merely spherically complete. Do we still have the equivalence of purity and semistability for $\varphi$-modules over $\r_K$? If this fails to be true, is the semistability of $\varphi$-modules over $\r_K$ still preserved by tensor products?
\end{question}

\subsection{Dieudonn\'e-Manin decomposition}

\begin{definition}
Let $R$ be a difference algebra. A difference module over $R$ is \emph{trivial} if it admits a $\varphi$-invariant basis. We say that $R$ is \emph{weakly difference-closed} if every dualizable difference module over $R$ is trivial. We say that $R$ is \emph{strongly difference-closed} if $R$ is inversive and weakly difference-closed.
\end{definition}

We fix a difference field $F$ which is complete for a $\varphi$-invariant nonarchimedean absolute value $|\cdot|_F$. Then $\varphi$ induces an endomorphism on the residue field $k_F$ of $F$; we view $k_F$ as a difference field with this endomorphism.

\begin{lemma}\label{lem:difference-closed}
Suppose that $F$ is spherically complete. Then the following are true.
\begin{enumerate}
\item[(1)]If $k_F$ is weakly difference-closed, then for any $a\in F$, there exists $x\in F$ with $|x|_F=|a|_F$ such that $\varphi(x)-x=a$;
\item[(2)]If $k_F$ is inversive, so is $F$.
\end{enumerate}
\end{lemma}
\begin{proof}
We first prove $(1)$. We equip $F$ with a partial order: for any $x,y\in F$, we say $x>y$ if
\[
|\varphi(x) - x - a|_F < |x - y|_F\leq|\varphi(y) - y - a|_F.
\]
We first show that the set $\{x\mid|x|_F\leq|a|_F\}$ has a maximal element. Suppose that $x_1<x_2<\dots$ is an infinite chain in $\{x\mid|x|_F\leq|a|_F\}$. Let $r_i=|\varphi(x_i)-x_i-a|_F$. It follows that $\mathrm{B}(x_1,r_1)\supset \mathrm{B}(x_2,r_2)\supset\cdots$. Since $F$ is spherically complete, $\bigcap_{i=1}^{\infty}\mathrm{B}(x_i,r_i)$ is nonempty.
Pick an $x_0\in\bigcap_{i=1}^{\infty}\mathrm{B}(x_i,r_i)$. Then $|x_0-x_i|_F\leq r_i$ for any $i\geq 1$. Hence
\[
|\varphi(x_0)-x_0-a|_F=|(\varphi(x_i)-x_i-a)+\varphi(x_0-x_i)-(x_0-x_i)|_F\leq r_i
\]
for any $i\geq1$. On the other hand, since $|x_i-x_{i+1}|_F>r_{i+1}$, we get $|x_0-x_{i}|_F=|(x_0-x_{i+1})-(x_i-x_{i+1})|_F=|x_i-x_{i+1}|_F$. Hence
\[
|\varphi(x_0)-x_0-a|_F\leq r_{i+1}<|x_i-x_{i+1}|_F=|x_0-x_i|_F,
\]
yielding $x_0>x_i$ for any $i\geq1$. We therefore prove the claim by Zorn's lemma. Let $x'$ be a maximal element of the set $\{x\mid|x|_F\leq|a|_F\}$. We claim that $\varphi(x')-x'=a$. If this is not the case, let $b=\varphi(x')-x'-a$. Since $k_F$ is
weakly difference-closed, by \cite[Lemma 14.3.3(b),(c)]{Ke07}, we may choose some $y$ with $|y|_F=1$ so that $|\frac{\varphi(b)}{b}\varphi(y)-y+1|_F<1$ . Let $y'=x'+by$. Then
\[
|\varphi(y')-y'-a|_F=|b(\frac{\varphi(b)}{b}\varphi(y)-y+1)|_F<|b|_F=|x'-y'|_F=|\varphi(x')-x'-a|_F.
\]
This implies that $y'>x'$ which contradicts the maximality of $x$. Hence $\varphi(x')-x'=a$; it is clear that $|x'|_F=|a|_F$.

The proof of $(2)$ is similar. Let $a\in F$. We equip $F$ with a partial order: for any $x,y\in F$, we say $x>y$ if
\[
|\varphi(x)-a|_F < |x - y|_F\leq|\varphi(y)-a|_F.
\]
Suppose that $x_1<x_2<\cdots$ is an infinite chain in $\{x\mid|x|_F\leq|a|_F\}$. Let $r_i=|\varphi(x_i)-a|_F$. It follows that $\mathrm{B}(x_1,r_1)\supset \mathrm{B}(x_2,r_2)\supset\cdots$.
Pick an $x_0\in\bigcap_{i=1}^{\infty}\mathrm{B}(x_i,r_i)$. Then $|x_0-x_i|_F\leq r_i$ for any $i\geq 1$.
A similar argument shows that
$x_0>x_i$ for any $i\geq1$. By Zorn's lemma we choose a maximal element $x'\in\{x\mid|x|_F\leq|a|_F\}$. Now suppose that $b'=\varphi(x')-a$ is nonzero. Since $k_F$ is inversive, we may choose some $y\in F$ with $|y|_F=1$ so that $|\frac{\varphi(b)\varphi(y)}{b}-1|_F<1$. It follows that $x'-by>x'$ which is a contradiction. Hence $\varphi(x')=a$.
\end{proof}

\begin{convention}\label{con:norm}
For any valuation $v$ (resp. norm $|\cdot|$) and a matrix $A=(A_{ij})$, we use $v(A)$ (resp. $|A|$) to denote the minimal valuation (resp. maximal norm) among the entries.
\end{convention}

\begin{lemma}\label{lem:pure-trivial}
If $F$ is spherically complete, and if $k_F$ is weakly difference-closed, then for any $A\in\mathrm{GL}_d(\OO_F)$,
there exists $U\in\mathrm{GL}_d(\OO_F)$ so that $U^{-1}A\varphi(U)=I_d$.
\end{lemma}
\begin{proof}
The reduction of $A$ in $\mathrm{GL}_d(k_F)$ defines a dualizable difference module over $k_F$. Since $k_F$ is weakly difference-closed, this module is trivial. This implies that we may choose some $U_1\in\mathrm{GL}_d(\OO_F)$ so that $|U_1^{-1}A\varphi(U_1)-I_d|_F=c<1$.  We will inductively construct a convergent sequence $U_1, U_2,\dots\in\mathrm{GL}_d(\OO_F)$ so that
\[
|U_i-U_{i+1}|_F\leq c^i,\quad |U_i^{-1}A\varphi(U_i)-I_d|_F\leq c^i
\]
for every $i\geq1$. Choose $u\in F$ so that
$|u|_F=c$. Given $U_i$, by Lemma \ref{lem:difference-closed}(1), there exists some $X_i\in \mathrm{M}_d(u^i\OO_F)$ so that
\[
\varphi(X_i)-X_i+(U_i^{-1}A\varphi(U_i)-I_d)=0.
\]
Put $U_{i+1}=U_i(I_d+X_i)$. It follows that $U_{i+1}\in\mathrm{GL}_d(\OO_F)$ and $|U_{i+1}-U_i|_F\leq c^{i}, |U_{i+1}^{-1}A\varphi(U_{i+1})- I_d|_F\leq c^{i+1}$.
Let $U=\lim_{i\ra\infty}U_i$. Then $U^{-1}A\varphi(U)=I_d$.
\end{proof}

\begin{definition}
Let $R$ be a difference algebra. For $\lambda\in R$ and a positive integer $d$, define
\emph{$V_{\lambda,d}$} to be the difference module over $R$ with a basis $e_1$, $\dots$, $e_d$ so that
\[
\varphi(e_1)=e_2, \dots, \varphi(e_{d-1})=e_d, \varphi(e_d)=\lambda e_1;
\]
and any such a basis is
called a \emph{standard basis} of $V_{\lambda,d}$. For a difference module $V$ over $R$, a \emph{Dieudonn\'e-Manin decomposition} of $V$
is a direct sum decomposition $V\cong\oplus_{i=1}^{n}V_{\lambda_i,d_i}$ for some $\lambda_i,d_i, 1\leq i\leq n$.
\end{definition}

The following theorem generalizes the usual Dieudonn\'e-Manin classification theorem for difference modules over complete discretely valued difference fields (e.g. \cite[Theorem 14.6.3]{Ke07}) to difference modules over spherically complete difference fields.

\begin{theorem}
If $F$ is spherically complete, and if $k_F$ is strongly difference-closed, then every dualizable difference module $V$ over $F$ admits a Dieudonn\'e-Manin decomposition.
\end{theorem}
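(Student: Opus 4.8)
The plan is to mimic the classical Dieudonn\'e--Manin argument (as in \cite[Theorem 14.6.3]{Ke07}), proceeding by induction on the rank of the $\phi$-module $V$ over $F$, the rank-one case being handled by Lemma~2.28(2): any rank-one module $F v$ with $\phi(v)=\lambda v$ becomes, after rescaling $v$ by a suitable $x$ with $\phi(x)/x$ having the right valuation (produced using part~(2) applied to an element of valuation $-v_F(\lambda)$), isomorphic to a standard $V_{\mu,1}$ with $v_F(\mu)=v_F(\lambda)$. For the inductive step, the first task is to produce inside $V$ a nonzero standard $\phi$-submodule $V_{\lambda,d}$; equivalently, to find an eigenvector-type element for some power of $\phi$. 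The standard device is to consider the operator $\phi$ acting on the set of vectors of minimal ``valuation profile'' and show, using a compactness/successive-approximation argument enabled by spherical completeness, that one can solve the relevant twisted equation $\phi^d(w)=\lambda w$ for some $w\neq 0$; here the strong difference-closedness of $k_F$ is what guarantees that the residual obstruction at each stage can be killed, exactly as in Lemma~2.28.

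Once a standard submodule $W=V_{\lambda,d}\subseteq V$ is found, I would show $W$ is \emph{saturated} and that the extension
\[
0\longrightarrow W\longrightarrow V\longrightarrow V/W\longrightarrow 0
\]
splits. Saturation plus splitting reduce the problem to $V/W$, which has smaller rank, and the inductive hypothesis finishes the proof. The splitting is obtained by the usual cohomological computation: choosing an arbitrary set-theoretic lift of a standard basis of $V/W$ and correcting it requires solving equations of the shape $\phi(x)-Bx=c$ with $B$ coming from the $\phi$-action on $W$; after passing to $[d]_*$ and twisting so that the relevant matrix has entries of valuation zero, this is handled by Lemma~2.28(1) applied entrywise together with a convergence argument, again using spherical completeness to pass to the limit. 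It is worth isolating a lemma stating that if $V_{\lambda,d}$ and $V_{\mu,e}$ are standard modules then $H^1_\phi$ of $\on{Hom}(V_{\mu,e},V_{\lambda,d})$ vanishes when the slopes are suitably ordered (or always, after the reduction), which is the precise statement that makes all extensions between standard pieces split.

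The main obstacle is the first step: producing the initial standard submodule without the crutch of a discrete valuation (where one can argue via lattices and the image of $\phi$ on a fixed $\OO_F$-lattice). In the discrete case one uses that $\phi$ maps a lattice to a commensurable one and extracts an eigenvector by a finiteness argument on the residue field; here $v_F$ may be dense, so instead I would set up an iterative scheme: start with any nonzero $v_0$, and repeatedly replace it by a suitably normalized $\phi$-translate, tracking the valuation of the ``defect'' $\phi^d(v)-\lambda v$ for the optimal choice of $\lambda$ and $d$; spherical completeness provides a limit vector $v_\infty$ in the nested discs, and strong difference-closedness of $k_F$ guarantees the defect can be made to vanish in the limit rather than merely tend to a nonzero residual. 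Making this scheme converge — in particular choosing $d$ and $\lambda$ correctly and controlling how the valuation profile of the approximants evolves so that the discs genuinely nest — is the delicate point; everything after that is a formal dévissage using the splitting lemma and induction.
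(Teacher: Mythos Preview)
Your outline has the right overall shape (find a standard submodule, split off, induct), but the step you yourself flag as ``delicate'' is in fact the whole difficulty, and the iterative scheme you sketch is not a proof. Without knowing in advance what $d$ and $\lambda$ should be, there is no evident way to make the discs $B(v_i,r_i)$ nest: replacing $v$ by a normalized $\phi$-translate does not obviously decrease any defect $\|\phi^d(v)-\lambda v\|$ unless you already know the correct slope, and for a general $V$ the slopes need not be rational or lie in $v_F(F^\times)$ at all, so the choice of $(\lambda,d)$ is genuinely global information you cannot read off a single approximant.

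The paper sidesteps this entirely by first invoking \cite[Theorem 14.4.13]{Ke07} to reduce to the case where $V$ is \emph{pure} of some norm $s$; this is what pins down the slope a priori. The core technical input is then the statement that a $\phi$-module pure of norm $1$ is \emph{trivial}: for $A\in\mathrm{GL}_d(\OO_F)$ one builds $U_i\in\mathrm{GL}_d(\OO_F)$ with $U_i^{-1}A\phi(U_i)\equiv I_d\pmod{\pi^i}$ by solving $\phi(X)-X\equiv\text{error}$ at each stage via Lemma~2.2(1), and spherical completeness gives the limit. Once this is known, two things follow at once. First, for $V$ pure of norm $s$, choose the least $m$ with $s^m\in|F^\times|$ and $\pi$ with $|\pi|=s^m$; then $\pi^{-1}\phi^m$ acts on a pure-of-norm-$1$ module, hence fixes a nonzero vector, which is exactly your sought-for generator of a copy of $V_{\pi,m}$ inside $V$. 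Second, the splitting of the resulting extension is immediate because $V_{\pi,m}^\vee\otimes V_{\pi,m}$ is again pure of norm $1$, hence trivial, hence has vanishing $H^1$. So the single lemma ``pure of norm $1$ $\Rightarrow$ trivial'' simultaneously produces the standard submodule and kills the $\mathrm{Ext}^1$; your proposed separate treatment of these two steps, and the general $H^1(\on{Hom}(V_{\mu,e},V_{\lambda,d}))$ lemma, are unnecessary once the reduction to pure is in hand.
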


\begin{proof}
First note that if $V$ is pure of spectral norm 1 (see \cite[Definition 14.4.6]{Ke07} for the definition), then $V$ admits a basis on which $\varphi$ acts via an element of $\mathrm{GL}(\OO_F)$ (\cite[Proposition 14.4.16]{Ke07}); hence $V$ is trivial by Lemma \ref{lem:pure-trivial}. Furthermore, Lemma \ref{lem:difference-closed}(1) implies that $\mathrm{H}^1(V)=V/(\varphi-1)$ is trivial in this case.

Now we follow the line of the proof of \cite[Theorem 14.6.3]{Ke07}.  By Lemma \ref{lem:difference-closed}(2), $F$ is inversive. Hence by \cite[Theorem 14.4.13]{Ke07}, it reduces to show the theorem for those $V$ which are pure of spectral norm $s>0$. Let $m$ be the smallest positive integer so that $s^m\in |F|_F$, and choose $u\in F$ so that $|u|_F=s^m$.  Then the first paragraph implies that $u^{-1}\varphi^m$ fixes some nonzero vector $v$ of $V$. This induces a nonzero map from $V_{u,m}$ to $V$. It follows from \cite[Lemma 14.6.2]{Ke07} that $V_{u,m}$ is irreducible. Hence this map is injective. Repeating this argument we get that $V$ is a successive extension of copies of $V_{u,m}$. Note that $\mathrm{Ext}^1(V_{u,m}, V_{u,m})=\mathrm{H}^1(V_{u,m}^{\vee}\otimes V_{u,m})=0$ since $V_{u,m}^{\vee}\otimes V_{u,m}$ is pure of spectral norm 1. We thus deduce that $V$ is a direct sum of copies of $V_{u,m}$.
\end{proof}

\subsection{Extended base rings}

Henceforth we assume that $K$ is discretely valued and $\pi$ is a uniformizer of $K$.
\begin{definition}\label{def:extended-Robba}
For any interval $I\subset(0,\infty]$, let $\widetilde{\r}_K^I$ be the set of formal sums
\[
f=\sum_{i\in\mathbb{Q}}a_iu^i
\]
with $a_i\in K$ satisfying the following conditions.
\begin{enumerate}
\item[(1)]For any $c>0$, the set of $i\in\mathbb{Q}$ so that $|a_i|\geq c$ is well-ordered (has no infinite decreasing subsequence).
\item[(2)]For any $s\in I$, $v(a_i)+si\ra\infty$ as $i\ra\pm\infty$, and $\inf_{i\in\mathbb{Q}}\{v(a_i)+si\}>-\infty$.
\end{enumerate}
Then $\inf_{i\in\mathbb{Q}}\{v(a_i)+si\}$ is attained at some $i$ because $K$ is discretely valued. These series form a ring under formal series addition and multiplication. For any $s\in I$, set the valuation $w_s(f)=\min_{i\in\mathbb{Q}}\{v(a_i)+si\}$, and the corresponding multiplicative norm $|f|_s=\max_{i\in\mathbb{Q}}\{|a_i|p^{-si}\}$. We denote $\widetilde{\r}_K^{(0,r]}$ by $\widetilde{\r}_K^r$ for simplicity.  We call the union $\widetilde{\r}_K=\cup_{r>0}\widetilde{\r}^r_K$ the \emph{extended Robba ring} over $K$. We view $\widetilde{\r}_K$ as a difference algebra over $K$ with the endomorphism $\varphi(f)=\sum_{i\in\mathbb{Q}}\varphi_K(a_i)u^{qi}$.
\end{definition}

\begin{remark}\label{rem:kedlaya-definition}
The definition of the extended Robba ring in \cite{Ke06} misses the second part of condition (2) of Definition \ref{def:extended-Robba}.
\end{remark}

\begin{definition}\label{def:extended-bounded}
For any $r>0$, let $\widetilde{\r}_K^{\bd,r}$ be the subring of $\widetilde{\r}_K^r$
consisting of series with $\{v(a_i)\}_{i\in\mathbb{Q}}$ bounded below. Let $\widetilde{\r}_K^\bd=\cup_{r>0}\widetilde{\r}_K^{\bd,r}$. We equip $\widetilde{\r}^\bd_K$ with the valuation $w(f)=\min_{i\in\mathbb{Q}}\{v(a_i)\}$ and the corresponding multiplicative norm $|f|=\max_{i\in\mathbb{Q}}\{|a_i|\}$. Let $\widetilde{\r}_K^{\int}$ be the valuation ring of $\widetilde{\r}_K^{\bd}$, and let
$\widetilde{\r}_K^{\int,r}=\widetilde{\r}_K^{\int}\cap\widetilde{\r}_K^{\bd,r}$. We call $\widetilde{\r}^\bd_K$ the \emph{extended bounded Robba ring} over $K$.
\end{definition}

\begin{definition}\label{def:extended-calE}
Let $\widetilde{\calE}_K$ be the ring of
formal sums
\[
f=\sum_{i\in\mathbb{Q}}a_iu^i
\]
with coefficients in $K$ satisfying the following conditions.
\begin{enumerate}
\item[(1)] For each $c>0$, the set
of $i\in\mathbb{Q}$ such that $|a_i|\geq c$ is well-ordered.
\item[(2)]The set $\{v(a_i)\}_{i\in\mathbb{Q}}$ is bounded below and
$v(a_i)\ra\infty$ as $i\ra-\infty$.
\end{enumerate}
We equip $\widetilde{\calE}_K$ with the valuation $w(f)=\min_{i\in\mathbb{Q}}\{v(a_i)\}$ and the corresponding multiplicative norm $|f|=\max_{i\in\mathbb{Q}}\{|a_i|\}$. Let $\OO_{\widetilde{\calE}_K}$ be the valuation ring of $\widetilde{\calE}_K$.
\end{definition}

\begin{remark}
It is clear that $\widetilde{\r}_K^{\bd,r}$ is the subring of $\widetilde{\calE}_K$ consisting of series such that $v(a_i)+ri\ra \infty$ as $i\ra-\infty$. The natural inclusion $\widetilde{\r}_K^\bd\rightarrow\widetilde{\calE}_K$ is an isometry with respect to $w$, and identifies $\widetilde{\calE}_K$ with the $w$-completion of
$\widetilde{\r}_K^{\bd}$. The restriction of $\varphi$ on $\widetilde{\r}^\bd_K$ is an isometry with respect to $w$, and
we still denote by $\varphi$ its continuous extension to $\widetilde{\calE}_K$. We view $\widetilde{\r}_K^\bd$ and $\widetilde{\calE}_K$ as difference fields with the endomorphism $\varphi$.
\end{remark}

\begin{definition}\label{def:extended-topology}
For any interval $I\subseteq(0,\infty]$, we equip $\widetilde{\r}_K^I$ with the Fr\'echet topology defined by $|\cdot|_s$ for all $s\in I$; $\widetilde{\r}_K^I$ is complete for this topology. If $I=[r_1,r_2]$ is a closed interval, then $\widetilde{\r}_K^I$ becomes a $K$-Banach algebra with norm $\max\{|\cdot|_{r_1},|\cdot|_{r_2}\}$. We equip  $\widetilde{\r}_K=\cup_{r>0}\widetilde{\r}_K^r$ with the locally convex inductive limit topology. For any $r>0$, we equip $\widetilde{\r}_K^{\bd,r}$ with the norm $\max\{|\cdot|,|\cdot|_r\}$; it is a $K$-Banach algebra under this norm. We equip $\widetilde{\r}_K^\bd=\cup_{r>0}\widetilde{\r}_K^{\bd,r}$ with the locally convex inductive limit topology.
\end{definition}

\begin{definition}
For $S$ a commutative ring, let $S((u^{\mathbb{Q}}))$ denote the Hahn-Malcev-Neumann algebra of generalized power series $\sum_{i\in\mathbb{Q}}c_iu^i$, where each $c_i\in S$ and the set of $i$ with $c_i\neq0$ is well-ordered; these series form a ring under formal series addition and multiplication.
\end{definition}

\begin{remark}
It is clear that the residue fields of $\widetilde{\r}_K^\bd$ and $\widetilde{\calE}_K$ are isomorphic to $k((u^\mathbb{Q}))$.
\end{remark}

\begin{proposition}\label{prop:units-extended}
The extended Robba ring $\widetilde{\r}_K$ is a B\'ezout domain, and the units of $\widetilde{\r}_K$ are precisely the nonzero elements of $\widetilde{\r}_K^\bd$. As a consequence, $\widetilde{\r}_K^\bd$ (and hence $\widetilde{\calE}_K$) is a discretely valued field.
\end{proposition}
\begin{proof}
As explained in \cite[Remark 2.2.5]{Ke06}, $\widetilde{\r}_K$ is the analytic ring with residue field $k((u^{\mathbb{Q}}))$ in the sense of \cite[\S2.4]{Ke05}, by taking $\varphi_K$ to be an absolute Frobenius lift on $K$. The proposition then follows from \cite[Theorem 2.9.6]{Ke05} and \cite[Lemma 2.4.7]{Ke05}.
\end{proof}

\begin{remark}\label{rem:embedding}
It follows from \cite[Proposition 2.2.6]{Ke06} that there is a
$\varphi$-equivariant embedding $\tau_K:\r_K\rightarrow\widetilde{\r}_K$ so that for $r$ sufficiently small, $\r_K^r$ maps to $\widetilde{\r}_K^r$ preserving $w_r$. It thus follows that $\tau_K$ maps $\r_K^{\bd}$ to
$\widetilde{\r}_K^{\bd}$ preserving $w$. Hence $\tau_K$ induces an $\varphi$-equivariant embedding from $\calE_K$ to $\widetilde{\calE}_K$ by taking the completion; we still denote by $\tau_K$ this embedding. In this way, we view $\r^\bd_K$, $\r_K$, $\calE_K$ as difference subalgebras of $\widetilde{\r}^\bd_K$, $\widetilde{\r}_K$, $\widetilde{\calE}_K$ respectively.
\end{remark}

\begin{proposition}\label{prop:kuQ-difference-closed}
If $k$ is strongly difference-closed, so is $k((u^{\mathbb{Q}}))$.
\end{proposition}
\begin{proof}
This is \cite[Proposition 2.5.5]{Ke07}.
\end{proof}

\subsection{$\varphi$-modules over the extended Robba ring}
\begin{definition}
For any $R\in\{\widetilde{\r}_K, \widetilde{\r}^\bd_K, \widetilde{\calE}_K\}$, let $M$ be a $\varphi$-module over $R$ of rank
$n>0$. Let $v$ be a generator of $\wedge^n
M$, and suppose $\varphi(v)=\lambda v$ for some $\lambda\in R^\times\subseteq\widetilde{\calE}_K^\times$.
We define the \emph{degree} of $M$ by setting $\deg(M)=-w(\lambda)$ which is independent of the choice of $v$ because $\varphi$ is an isometry on $\widetilde{\calE}_K$, and we define the
\emph{slope} of $M$ by setting $\mu(M)=\deg(M)/\rank(M)$. Define \emph{stable}, \emph{semistable}, \emph{semistable filtration}, \emph{slope multiset}, \emph{slope polygon}, \emph{HN filtration}, \emph{$(c,d)$-pure model}, \emph{pure of slope $s$}, \emph{\'etale}, \emph{\'etale model} for $\varphi$-modules over $R$ by changing $\calE_K$, $\r^\bd_K$, $\r_K$ to $\widetilde{\calE}_K$, $\widetilde{\r}_K^\bd$, $\widetilde{\r}_K$ respectively in Definitions \ref{def:semistable}, \ref{def:semistable-filtration} and \ref{def:pure}.
\end{definition}

\begin{definition}
By an \emph{extension} of $K$, we mean a field extension $L$ of $K$ which is complete for a discrete valuation extending the one on $K$, and is equipped with an isometric field automorphism $\varphi_L$ extending $\varphi_K$. The extension $L$ is called \emph{admissible} if it has the same value group as $K$.
\end{definition}

\begin{lemma}
The field $K$ admits an admissible extension $L$ so that its residue field $k_L$ is strongly difference-closed with respect to the reduction of $\varphi_L$.
\end{lemma}
\begin{proof}
By \cite[Proposition 3.2.4]{Ke06}, $K$ admits an admissible extension $L$ so that $k_L$ is weakly difference-closed. (The condition that any \'etale $\varphi$-module over $L$ is trivial is equivalent to the condition that $k_L$ is weakly difference-closed). Since $L$ is inversive, a fortiori $k_L$ is inversive.
\end{proof}

\begin{proposition}\label{prop:pure-semistable-extended}
Every pure $\varphi$-module over $\widetilde{\r}_K$ is semistable.
\end{proposition}
\begin{proof}
The proposition follows from \cite[Theorem 1.6.10(a)]{Ke06}.
\end{proof}

\begin{proposition}
Every $\varphi$-module over $\widetilde{\r}_K$ admits a unique HN filtration.
\end{proposition}
\begin{proof}
The proposition follows from \cite[Proposition 1.4.15]{Ke06}.
\end{proof}

\begin{proposition}\label{prop:robba-extension}
If $M$ is a $\varphi$-module over $\r_K$, and if $L$ is an extension of $K$, then the HN filtration of $M$, tensored up with $\r_L$ (resp. $\widetilde{\r}_L$), gives the HN filtration of $M\otimes_{\r_K}\r_L$ (resp. $M\otimes_{\r_K}\widetilde{\r}_L$).
\end{proposition}
\begin{proof}
It reduces to show that if $M$ is semistable, then its base changes are also semistable. Note that $M$ is pure by Theorems \ref{thm:slope-filtration}. Thus its base changes are also pure; hence they are semistable by Propositions \ref{prop:pure-semistable} and \ref{prop:pure-semistable-extended}.
\end{proof}

\begin{definition}
For any $\varphi$-module $M$ over $\widetilde{\r}_K$, the slopes of the successive quotients and the slope polygon of the HN filtration of $M$ are called the \emph{slopes} and the \emph{HN-polygon} of $M$ respectively.
\end{definition}

The following theorem is the combination of \cite[Proposition 2.1.6, Theorem 2.1.8]{Ke06}.
\begin{theorem}\label{thm:Robba-decomposition}
If $k$ is strongly difference-closed, then every semistable $\varphi$-module $M$ over $\widetilde{\r}_K$ is pure. Furthermore, it admits a Dieudonn\'e-Manin decomposition $M=\bigoplus V_{\lambda,d}$ so that each $\lambda$ is a power of $\pi$.
\end{theorem}

\begin{lemma}\label{lem:calE-bounded}
Let $\alpha=\sum_{i\in\mathbb{Q}}a_iu^i\in\widetilde{\r}^{\bd,r}_K$ and $n\in\mathbb{N}$. Suppose that $\beta=\sum_{i\in\mathbb{Q}}b_iu^i\in\widetilde{\calE}_K$ satisfies
\begin{equation}\label{eq:Frobenius}
\varphi(\beta)-\pi^n\beta=\alpha.
\end{equation}
Then for any $i<0$, we have $v(b_i)\geq\min_{j\leq i}\{v(a_j)\}$. As a consequence, we have $\beta\in\widetilde{\r}^{\bd,qr}_K$. Furthermore, if $w(\beta)\geq w(\alpha)$, then $w_r(\beta)\geq\min\{w(\alpha),w_r(\alpha)\}$, and if moreover $n>0$, then $w(\beta)=w(\alpha)$.
\end{lemma}
\begin{proof}
Suppose that there exists some $i_0<0$ such that $v(b_{i_0})<\min_{j\leq i_0}\{v(a_j)\}$. By (\ref{eq:Frobenius}), we have
\[
\varphi(b_{i_0})-\pi^n b_{qi_0}=a_{qi_0}
\]
by comparing the coefficients of $u^{qi_0}$. Since $v(b_{i_0})<v(a_{qi_0})$, we get
\[
v(b_{qi_0})=v(b_{i_0})-n\leq v(b_{i_0})<\min_{j\leq i_0}\{v(a_j)\}\leq\min_{j\leq qi_0}\{v(a_j)\}.
\]
Iterating this argument, we get $v(b_{q^mi_0})=v(b_{i_0})-mn$ for any $m\in\mathbb{N}$. Thus we get an infinite descending sequence $q^mi_0$ with $v(b_{q^mi_0})$ decreasing which contradicts the condition that $v(b_i)\ra\infty$ as $i\ra-\infty$. This proves the first statement of the lemma. Hence $\varphi(\beta)=\pi^{n}\beta+\alpha$ belongs to $\widetilde{\r}_K^{\bd,r}$, yielding $\beta\in\widetilde{\r}_K^{\bd,qr}$.

Note that
\begin{eqnarray*}
w_r(\varphi(\beta))&=&\min_{i\in\mathbb{Q}}\{v(\varphi_K(b_i))+rqi\}=\min_{i\in\mathbb{Q}}\{q(v(b_i)+ri)-(q-1)v(b_i)\}\\
&\leq&\min_{i\in\mathbb{Q}}\{q(v(b_i)+ri)\}-(q-1)w(\beta)= qw_r(\beta)-(q-1)w(\beta).
\end{eqnarray*}
Thus if $w(\beta)\geq w(\alpha)$ and $w_r(\beta)<\min\{w(\alpha),w_r(\alpha)\}$, then
\[
w_r(\varphi(\beta))\leq qw_r(\beta)-(q-1)w(\beta)<w_r(\beta)<w_r(\alpha).
\]
This contradicts the condition $w_r(\varphi(\beta))\geq\min\{w_r(\pi^n\beta),w_r(\alpha)\}\geq\min\{w_r(\beta),w_r(\alpha)\} $. If $n>0$, then $w(\pi^n\beta)>w(\varphi(\beta))=w(\beta)$; hence $w(\alpha)=\min\{w(\varphi(\beta)), w(\pi^n\beta)\}=w(\beta)$.
\end{proof}

For any difference algebra $R$ with an automorphism $\varphi$, we set the twisted powers $a^{\{m\}}$ for any $m\in\mathbb{Z}$ and $a\in R$ by the two-way recurrence
\[
a^{\{0\}}=1,\quad a^{\{m+1\}}=\varphi(a^{\{m\}})a.
\]
\begin{lemma}\label{lem:Frobenius-equation}
Suppose that $k$ is strongly difference-closed. Then the following are true.
\begin{enumerate}
\item[(1)]
Let $\alpha\in\widetilde{\calE}_K$. If $n\neq0$, then (\ref{eq:Frobenius}) admits a unique solution $\beta\in\widetilde{\calE}_K$ which is
\begin{equation}\label{eq:solution-n-nega}
\beta=-\sum_{m=0}^\infty(\pi^{-n})^{\{m+1\}}\varphi^m(\alpha)
\end{equation}
if $n<0$, or
\begin{equation}\label{eq:solution-n-posi}
\beta=\sum_{m=0}^\infty(\pi^{-n})^{\{-m\}}\varphi^{-m-1}(\alpha)
\end{equation}
if $n>0$. Furthermore, if $n>0$, then $w(\beta)=w(\alpha)$, and if $n<0$, then $w(\beta)=w(\alpha)-n$. If $n=0$, then (\ref{eq:Frobenius}) admits a solution $\beta\in\widetilde{\calE}_K$ with $w(\beta)=w(\alpha)$.
\item[(2)]
Let $\alpha\in\widetilde{\r}_K^{\bd,r}$.  If $n>0$, then (\ref{eq:solution-n-posi}) provides the unique solution $\beta\in\widetilde{\r}_K^{\bd}$ of (\ref{eq:Frobenius}). Furthermore, we have $\beta\in\widetilde{\r}_K^{\bd,qr}$, $w(\beta)=w(\alpha)$ and $w_r(\beta)\geq\min\{w(\alpha), w_r(\alpha)\}$. If $n=0$, then (\ref{eq:Frobenius}) admits a solution $\beta\in\widetilde{\r}_K^{\bd,qr}$ with $w(\beta)=w(\alpha), w_s(\beta)\geq w_s(\alpha)$ for any $0<s\leq r$.
\item[(3)]
If $\alpha\in\widetilde{\r}_K^{r}$ and $n\geq0$, then (\ref{eq:Frobenius}) admits a solution $\beta\in\widetilde{\r}_K^{qr}$ with  $w_r(\beta)\geq w_r(\alpha)-n$.
\end{enumerate}
\end{lemma}
\begin{proof}
We first prove $(1)$. Suppose $n\neq0$. Then it is clear that (\ref{eq:solution-n-nega}) and (\ref{eq:solution-n-posi}) provide a solution of (\ref{eq:Frobenius}). The uniqueness is obvious since $\varphi$ preserves $w$.
For $n=0$, since $k$ is strongly difference-closed, $k((u^{\mathbb{Q}}))$ is strongly difference-closed by Proposition \ref{prop:kuQ-difference-closed}. Therefore there exists $\beta\in\widetilde{\calE}_K$ with $w(\beta)=w(\alpha)$ such that $\varphi(\beta)-\beta=\alpha$
by Lemma \ref{lem:difference-closed}.

For $(2)$, the case $n>0$ follows from $(1)$ and Lemma \ref{lem:calE-bounded}. Now suppose $n=0$ and $\alpha=\sum_{i\in\mathbb{Q}}a_iu^i\in\widetilde{\r}_K^{\bd,r}$. Put $\alpha^+=\sum_{i>0}a_iu^i\in\widetilde{\r}_K^{\bd,r}$
and $\alpha^{-}=\sum_{i<0}a_iu^i\in\widetilde{\r}_K^{\bd,r}$.
Note that the infinite sum $\beta^+=-\sum_{m=0}^\infty \varphi^m(\alpha^+)$ is convergent in $\widetilde{\r}^r_K$ and has bounded coefficients; hence $\beta^+\in\widetilde{\r}_K^{\bd,r}$. It is clear that $w(\beta^+)\geq w(\alpha^+)\geq w(\alpha)$. Furthermore, since $\alpha^+$ have only positive powers of $u$, we get that $w_s(\beta^+)\geq w_s(\alpha^+)\geq w_s(\alpha)$ for any $0<s\leq r$. Choose $b_0\in K$ with $v(b_0)=v(a_0)$ such that $\varphi_K(b_0)- b_0=a_0$. Choose $\beta^-\in\widetilde{\calE}_K$ such that $w(\beta^{-})=w(\alpha^{-})$ and $\varphi(\beta^-)-\beta^-=\alpha^-$. We may suppose that $\beta^-$ only has negative powers of $u$ by dropping the nonnegative powers.
Write $\beta^-=\sum_{i<0}b_iu^i$. Then $v(b_i)\geq\min_{j\leq i}\{v(a_j)\}$ by Lemma \ref{lem:calE-bounded}, yielding $w(\beta^-)\geq w(\alpha^-)\geq w(\alpha)$ and $w_s(\beta^-)\geq w_s(\alpha^-)\geq w_s(\alpha)$ for any $0<s\leq r$. Then $\beta=\beta^++\beta^-+b_0$ is a desired solution and satisfies $w_s(\beta)\geq w_s(\alpha)$ for any $0<s\leq r$.

For $(3)$, if $n=0$ and $\alpha\in\widetilde{\r}_K^{r}$, write $\alpha=\sum_{i=1}^\infty \alpha_i$ such that $\alpha_i\in\widetilde{\r}_K^{\bd,r}, w_s(\alpha_i)\geq w_s(\alpha)$ for each $i\geq1$ and $w_s(\alpha_i)\rightarrow\infty$ as $i\rightarrow\infty$ for any $0<s\leq r$. For each $\alpha_i$, by $(2)$, choose $\beta_i\in\widetilde{\r}_K^{\bd,r}$ so that $\beta_i-\varphi(\beta_i)=\alpha_i$ and $w_s(\beta_i)\geq w_s(\alpha_i)$ for any $0<s\leq r$. Then $\sum_{i=1}^\infty \beta_i$ converges to a desired solution $\beta\in\widetilde{\r}_K^{r}$ of (\ref{eq:Frobenius}). Furthermore, since $\varphi(\beta)=\beta+\alpha\in\widetilde{\r}_K^r$, we get that $\beta\in\widetilde{\r}_K^{qr}$.

Now suppose $n>0$. For $\alpha=\sum_{i\in\mathbb{Q}}a_iu^i\in\widetilde{\r}_K^r$, write $\alpha=\alpha_1+\alpha_2$ where
\[
\alpha_1=\sum_{i\geq n/r}a_iu^i,\quad \alpha_2=\sum_{i<n/r}a_iu^i.
\]
Then a short computation shows that for any $m\geq0$ and $0<s\leq r$,
\[
w_s((\pi^{-n})^{\{m+1\}}\varphi^m(\alpha_1))\geq -n(m+1)+w_s(\alpha)+(q^m-1)ns/r,
\]
and
\[
w_s((\pi^{-n})^{\{-m\}}\varphi^{-m-1}(\alpha_2))\geq mn+w_s(\alpha)-(1-\frac{1}{q^{m+1}})ns/r.
\]
This implies that $w_s((\pi^{-n})^{\{m+1\}}\varphi^m(\alpha_1))$ and $w_s((\pi^{-n})^{\{-m\}}\varphi^{-m-1}(\alpha_2))$ approach to infinity as $m\ra \infty$. Furthermore, we get
\[
w_r((\pi^{-n})^{\{m+1\}}\varphi^m(\alpha_1))\geq -n(m+1)+w_r(\alpha)+(q^m-1)n\geq w_r(\alpha)-n,
\]
\[
w_r((\pi^{-n})^{\{-m\}}\varphi^{-m-1}(\alpha_2))\geq mn+w_r(\alpha)-(1-\frac{1}{q^{m+1}})n\geq w_r(\alpha)-n.
\]
Hence the sums
\[
\beta_1=\sum_{m=0}^\infty (\pi^{-n})^{\{m+1\}}\varphi^m(\alpha_1),\quad \beta_2=\sum_{m=0}^\infty (\pi^{-n})^{\{-m\}}\varphi^{-m-1}(\alpha_2)
\]
converge in $\widetilde{\r}_K^{r}$, and satisfy $w_r(\beta_i)\geq w_r(x)-n$ for $i=1,2$. It is clear that $\varphi(\beta_1)-\pi^n\beta_1=-\alpha_1$ and $\varphi(\beta_2)-\pi^n\beta_2=\alpha_2$. Hence $\beta=(\beta_2-\beta_1)$ is a solution of (\ref{eq:Frobenius}); the condition $\varphi(\beta)=\pi^n\beta+\alpha\in\widetilde{\r}_K^{r}$ implies $\beta\in \widetilde{\r}_K^{qr}$. Hence $\beta$ is a desired solution.
\end{proof}

\begin{proposition}\label{prop:V-d-lambda-H1}
Suppose that $k$ is strongly difference-closed. Let $\lambda_1,\lambda_2\in \widetilde{\r}_K^\times$. Then $\mathrm{Ext}^1_{\varphi,\widetilde{\r}_K}(V_{\lambda_1,d_1}, V_{\lambda_2,d_2})=0$ if $\frac{w(\lambda_2)}{d_2}\leq\frac{w(\lambda_1)}{d_1}$. In particular, we have $\mathrm{H}^{1}({\widetilde{\r}}_K(n))=0$ if $n\geq 0$.
\end{proposition}
\begin{proof}
If we equip $\widetilde{\r}_K$ with the endomorphism $\varphi^{d_1d_2}$, then $V_{\lambda_1,d_1}$ and $V_{\lambda_2,d_2}$ become direct sums of rank $1$ $\varphi^{d_1d_2}$-modules with slopes $-d_2w(\lambda_1)$ and $-d_1w(\lambda_2)$ respectively.
Hence  $V=V_{\lambda_1,d_1}^\vee\otimes V_{\lambda_2,d_2}$ is a direct sum of rank $1$ $\varphi^{d_1d_2}$-modules with slopes $d_2w(\lambda_1)-d_1w(\lambda_2)\geq0$. By Theorem \ref{thm:Robba-decomposition}, every rank 1 $\varphi^{d_1d_2}$-module is of the form $\widetilde{\r}_K(n)$ for some integer $n$. It thus follows from Lemma \ref{lem:Frobenius-equation}(3) that $V/(\varphi^{d_1d_2}-1)V=0$, yielding $V/(\varphi-1)V=0$. Hence $\mathrm{Ext}^1_{\varphi,\widetilde{\r}_K}(V_{\lambda_1,d_1}, V_{\lambda_2,d_2})=\mathrm{H}^1(V)=0$.
\end{proof}

\begin{proposition}\label{prop:DM-decomposition-robba}
Suppose that $k$ is strongly difference-closed. Then every $\varphi$-module over $\widetilde{\r}_K$ admits a Dieudonn\'e-Manin decomposition.
\end{proposition}
\begin{proof}
By Theorem \ref{thm:Robba-decomposition}, every semistable $\varphi$-module over $\widetilde{\r}_K$ admits a Dieudonn\'e-Manin decomposition. We therefore deduce from Proposition \ref{prop:V-d-lambda-H1} that HN filtrations for $\varphi$-modules over $\widetilde{\r}_K$ are split. This yields the desired result.
\end{proof}

\begin{proposition}\label{prop:filtraton-split}
Suppose that $k$ is strongly difference-closed. Let $0=M_0\subset M_1\subset\cdots\subset M_l=M$ be a semistable filtration of a $\varphi$-module $M$ over $\widetilde{\r}_K$. If the slope polygon of this filtration coincides with the HN-polygon of $M$, then the filtration splits.
\end{proposition}
\begin{proof}
The analogue of the proposition for $\varphi$-modules over $\Gamma_{\mathrm{an,con}}^{\mathrm{alg}}$ is \cite[Corollary 4.7.4]{Ke05} which is proved by using the formal properties of HN filtrations and the other two facts about $\varphi$-modules over $\Gamma_{\mathrm{an,con}}^{\mathrm{alg}}$. Namely, every $\varphi$-module over $\Gamma_{\mathrm{an,con}}^{\mathrm{alg}}$ admits a Dieudonn\'e-Manin decomposition, and
\[
\mathrm{Ext}^1_{\varphi,\Gamma_{\mathrm{an,con}}^{\mathrm{alg}}}(V_{\pi^{c_1},d_1}, V_{\pi^{c_2},d_2})=0
\]
if $\frac{c_2}{d_2}\leq\frac{c_1}{d_1}$. In our case the analogues of these two facts are Propositions \ref{prop:V-d-lambda-H1} and \ref{prop:DM-decomposition-robba}.  Therefore we can establish the proposition the same way as \cite[Corollary 4.7.4]{Ke05}.
\end{proof}

\subsection{Slope decomposition and reverse slope filtration}

\begin{proposition}\label{prop:generic-slope-filtration}
A $\varphi$-module $M$ over $\calE_K$ (resp. $\widetilde{\calE}_K$) is semistable of slope $s$ if and only if it is pure of spectral norm $p^{s}$ in the sense of difference modules. Every $\varphi$-module over $\calE_K$ (resp. $\widetilde{\calE}_K$) admits a unique HN filtration. Furthermore, for any $\varphi$-module $M$ over $\calE_K$ (resp. $\widetilde{\calE}_K$) and an  extension $L$ of $K$, the HN filtration of $M$, tensored up to $\widetilde{\calE}_L$, gives the HN filtration of $M\otimes_{\calE_K}\widetilde{\calE}_L$ (resp. $M\otimes_{\widetilde{\calE}_K}\widetilde{\calE}_L$).
\end{proposition}
\begin{proof}
Granting the first assertion, the second one then follows from \cite[Theorem 14.4.15]{Ke07}. Note that if $M$ is irreducible, then $M$ is clearly semistable. The ``if" part of the first assertion thus follows from the fact that any extension of two semistable $\varphi$-modules which have the same slope is still semistable with the same slope. Conversely, if $M$ is semistable of slope $s$, by \cite[Theorem 14.4.15]{Ke07}, there exists a unique filtration $0=M_0\subset
M_1\cdots\subset M_l=M$ so that each successive quotient $M_i/M_{i-1}$ is pure of spectral norm $p^{s_i}$ with $s_1>\cdots>s_l$. Since $\mu(M)\geq s_i$ for every $i$ and $\mu(M)$ is the weighted average of these $s_i$, we must have $l=1$, yielding that $M$ is pure of spectral norm $p^s$. The last assertion follows from \cite[Proposition 14.4.8]{Ke07}.
\end{proof}

\begin{definition}
For any $\varphi$-module $M$ over $\calE_K$ or $\widetilde{\calE}_K$, the slopes of the successive quotients and the slope polygon of the HN filtration of $M$ are called the \emph{slopes} and the \emph{HN-polygon} of $M$ respectively.
\end{definition}

\begin{proposition}\label{prop:split}
If $k$ is strongly difference-closed, then any exact sequence of $\varphi$-modules over $\widetilde{\calE}_K$ splits.
\end{proposition}
\begin{proof}
We first have that the residue field $k((u^{\mathbb{Q}}))$ of $\widetilde{\calE}_K$ is strongly difference-closed by Proposition \ref{prop:kuQ-difference-closed}. The proposition then follows immediately from \cite[Corollary 14.6.6]{Ke07}.
\end{proof}

\begin{proposition}\label{prop:slope-decomposition}
Suppose that $k$ is strongly difference-closed. Then for any $\varphi$-module $M$ over $\widetilde{\calE}_K$, its HN filtration splits uniquely, i.e. there exists a unique direct sum decomposition $M=\oplus_{1\leq i\leq l}M_{s_i}$ of $\varphi$-modules, in which each $M_{s_i}$ is a semistable submodule of slope $s_i$. Moreover, each $M_{s_i}$ admits a Dieudonn\'e-Manin decomposition. Furthermore, for each $V_{\lambda,d}$ in the decomposition, we may force $\lambda$ to be a power of $\pi$.
\end{proposition}

\begin{proof}
Note that the residue field $k((u^{\mathbb{Q}}))$ of $\widetilde{\calE}_K$ is strongly difference-closed by Proposition \ref{prop:kuQ-difference-closed}. The first assertion then follows from \cite[Theorem 14.4.13]{Ke07} and Proposition \ref{prop:generic-slope-filtration}, and the second assertion follows from \cite[Theorem 14.6.3]{Ke07}.
\end{proof}

\begin{corollary}\label{cor:additive}
If $0\ra M_1\ra M\ra M_2$ is an exact sequence of $\varphi$-modules over $\calE_K$, then the slope multiset of the HN filtration of $M$ is the union of the slope multisets of the HN filtrations of $M_1$ and $M_2$.
\end{corollary}
\begin{proof}
Let $L$ be an admissible extension of $K$ with strongly difference-closed residue field. By Proposition \ref{prop:split}, the exact sequence
\[
0\ra M_1\otimes_{\calE_K}\widetilde{\calE}_L\ra M\otimes_{\calE_K}\widetilde{\calE}_L\ra M_2\otimes_{\calE_K}\widetilde{\calE}_L\ra 0
\]
splits. We the deduce from by Proposition \ref{prop:slope-decomposition} that the slope multiset of the HN filtration of $M\otimes_{\calE_K}\widetilde{\calE}_L$ is the union of the slope multisets of the HN filtrations of
$M_1\otimes_{\calE_K}\widetilde{\calE}_L$ and $M_2\otimes_{\calE_K}\widetilde{\calE}_L$. It follows from Proposition \ref{prop:generic-slope-filtration} that the slope multisets of the HN filtrations of $M,M_1,M_2$ are equal to the slope multisets of the HN filtrations of $M\otimes_{\calE_K}\widetilde{\calE}_L, M_1\otimes_{\calE_K}\widetilde{\calE}_L, M_2\otimes_{\calE_K}\widetilde{\calE}_L$ respectively. The proposition then follows.
\end{proof}

\begin{definition}\label{def:slope-decomposition}
Suppose that $k$ is strongly difference-closed. For any $\varphi$-module $M$ over $\widetilde{\calE}_K$, we call the decomposition $M=\oplus_{1\leq i\leq l}M_{s_i}$ given by Proposition \ref{prop:slope-decomposition} the \emph{slope decomposition} of $M$. Moreover, suppose that $s_1>\cdots>s_l$, put $M^{\mathrm{rev}}_i=\oplus^{l}_{j=l-i+1}M_{s_j}$ for $1\leq i\leq l$. We call
\[
0=M^{\mathrm{rev}}_0\subset M_1^{\mathrm{rev}}\subset\cdots\subset M_l^{\mathrm{rev}}=M
\]
the \emph{reverse filtration} of $M$.
\end{definition}

\begin{lemma}\label{lem:lattice}
 Suppose that $k$ is strongly difference-closed. If $N$ is a $\varphi$-module over $\widetilde{\r}^{\bd}_K$ so that $M=N\otimes_{\widetilde{\calE}^\bd_K}\widetilde{\calE}_K$ has nonpositive slopes, then $N$ admits a $\varphi$-stable $\widetilde{\r}_K^\int$-lattice.
\end{lemma}
\begin{proof}
By proposition \ref{prop:slope-decomposition}, we know that $M$ is a direct sum of some $V_{d_i,\lambda_i}$'s, where each $w(\lambda_i)$ is nonnegative. We fix a standard basis for each $V_{d_i,\lambda_i}$. Then the $\OO_{\widetilde{\calE}_K}$-lattice $L$ of $M$ generated by these standard bases is stable under $\varphi$. Choose an $\widetilde{\r}^{\int}_K$-lattice $Q$ of $N$; then there exist integers $m\geq n$ such that
\[
\pi^m Q\otimes_{\widetilde{\r}^\int_K}\OO_{\widetilde{\calE}_K}\subseteq L\subseteq \pi^n Q\otimes_{\widetilde{\r}^\int_K}\OO_{\widetilde{\calE}_K}
\]
since $\widetilde{\r}^\int_K$ is the valuation ring of $\widetilde{\r}^\bd_K$. Let $P=L\cap N$. Note that $(\pi^i Q\otimes_{\widetilde{\r}^\int_K}\OO_{\widetilde{\calE}_K})\cap N=\pi^i Q$ for any $i\in\mathbb{Z}$ because $\pi^i\OO_{\widetilde{\calE}_K}\cap\widetilde{\r}^\int_K=\pi^i\widetilde{\r}^\int_K$. Hence $\pi^m Q\subseteq P\subseteq \pi^nQ$.  This yields that $P$ is an $\widetilde{\r}^\int_K$-lattice of $N$, and is stable under $\varphi$.
\end{proof}
\begin{lemma}\label{lem:descent}
 Suppose that $k$ is strongly difference-closed. Let $N$ be a $\varphi$-module over $\widetilde{\r}^{\bd}_K$ so that $M=N\otimes_{\widetilde{\calE}^\bd_K}\widetilde{\calE}_K$ has nonnegative slopes. Let $v\in M$ satisfying $\varphi(v)=\lambda v$ for some $\lambda\in\widetilde{\r}^{\int}_K$. Then $v\in N$.
\end{lemma}
\begin{proof}
Applying Lemma \ref{lem:lattice} to the dual of $N$, we may choose an $\widetilde{\r}_K^\int$-lattice $P$ of $N$ which is stable under $\varphi^{-1}$. Choose an $\widetilde{\r}^\int_K$-basis $\e=\{e_1,\dots,e_n\}$ of $P$, and let $F$ be the matrix of $\varphi$ under $\e$; then $F^{-1}$ has entries in $\widetilde{\r}_K^{\int}$. Write $v=\e\mathbf{v}$ for some column vector $\mathbf{v}$ over $\widetilde{\calE}_K$. Then $\varphi(v)=\lambda v$ implies $F\varphi(\mathbf{v})=\lambda\mathbf{v}$; hence $F^{-1}\lambda\mathbf{v}=\varphi(\mathbf{v})$. By \cite[Proposition 2.5.8]{Ke06}, we get that $\mathbf{v}$ has entries in $\widetilde{\r}_K^{\bd}$. So $v\in N$.
\end{proof}

The following proposition establishes the existence of de Jong's ``reverse filtration" (\cite[Proposition 5.8]{de98}) for $\varphi$-modules over extended bounded Robba rings with a relative Frobenius lift.

\begin{proposition}\label{prop:reverse-filtration}
Suppose that $k$ is strongly difference-closed. Then for any $\varphi$-module $N$ over $\widetilde{\r}^\bd_K$, the reverse filtration of $M=N\otimes_{\widetilde{\r}^\bd_K}\widetilde{\calE}_K$ descends uniquely to a filtration of $N$. Furthermore, if $M$ is semistable, then its Dieudonn\'e-Manin decompositions descend to Dieudonn\'e-Manin decompositions of $N$.
\end{proposition}

\begin{proof}
Let $0=M^\mathrm{rev}_0\subset M^{\mathrm{rev}}_1\cdots \subset M^\mathrm{rev}_l=M$ be the reverse filtration of $M$. Replacing $\varphi$ with $\varphi^a$ for a suitable positive integer $a$, we may suppose that the slopes of $M$ are integral. It then suffices to show that $M^{\mathrm{rev}}_1$ and its Dieudonn\'e-Manin decompositions descend to $N$.  By twisting, we reduce to the case that $\mu(M^{\mathrm{rev}}_1)=0$. Then the slopes of $M$ are all nonnegative. We fix a Dieudonn\'e-Manin decomposition of $M^\mathrm{rev}_1$. If $e$ is part of a standard basis of some $V_{\lambda,d}$ in this decomposition, then $\varphi^d(e)=\varphi^{i}(\lambda)e$ for some $0\leq i\leq d-1$. We then deduce that $e\in N$ by Lemma \ref{lem:descent}. Hence $V_{\lambda,d}$ descends to $N$. This implies that $M_1^{\mathrm{rev}}$ together with this Dieudonn\'e-Manin decomposition descends to a $\varphi$-submodule $N_1^{\mathrm{rev}}$ of $N$. It is clear that $N_1^{\mathrm{rev}}=M^{\mathrm{rev}}_1\cap N$, yielding the uniqueness.
\end{proof}

We call this filtration the \emph{reverse filtration} of $N$.

\begin{lemma}\label{lem:V-n-1}
Suppose that $k$ is strongly difference-closed. Let $\lambda\in K$, and let $d$ be a positive integer. If $n=v_K(\lambda)/d\in\mathbb{Z}$, then $V_{\lambda,d}$ is isomorphic to the direct sum of $d$ copies of $V_{\pi^n,1}$.
\end{lemma}
\begin{proof}
By \cite[Corollary 14.4.9]{Ke07}, the $\varphi$-module $V_{\lambda,d}\otimes_{K}V_{\pi^{-n},1}$ is pure of norm 1. Hence it is trivial by \cite[Proposition 14.4.16]{Ke07} and Lemma \ref{lem:pure-trivial}. This yields the lemma.
\end{proof}

\begin{lemma}\label{lem:numerical-basis-generic}
Suppose that $k$ is strongly difference-closed. Let $D$ be an $n\times n$ diagonal matrix such that all the diagonal entries are powers of $\pi$. If $F$ is an $n\times n$ matrix over $\widetilde{\calE}_K$ satisfying $w(FD^{-1}-I_n)>0$, then there exists an invertible $n\times n$ matrix $U$ over $\widetilde{\calE}_K$ with $w(U-I_n)>0$ and $U^{-1}F\varphi(U)=D$.
\end{lemma}
\begin{proof}
We follow the proof of \cite[Proposition 5.9]{Ke04}. Suppose $w(FD^{-1}-I_n)=c_0$. We will inductively construct a sequence of invertible $n\times n$ matrices $\{U_i\}_{i\in\mathbb{N}}$ over $\OO_{\widetilde{\calE}_K}$ satisfying
\[
\min\{w(U_{i+1}-U_i), w(U_i^{-1}F\varphi(U_i)D^{-1}-I_n)\}\geq (i+1)c_0
\]
as follows. Put $U_0=I_n$. Given $U_i$, by Lemma \ref{lem:Frobenius-equation}(1), there exists an $n\times n$ matrix $X_i$ over $\widetilde{\calE}_K$ with
\[
X_i-D\varphi(X_i)D^{-1}=U^{-1}_iF\varphi(U_i)D^{-1}-I_n
\]
and
\[
\min\{w(X_i),w(D\varphi(X_i)D^{-1})\}=w(U^{-1}_iF\varphi(U_i)D^{-1}-I_n).
\]
Put $U_{i+1}=U_i(I_n+X_i)$, then $w(U_{i+1}-U_i)=w(X_i)\geq(i+1)c_0$ and
\begin{equation*}
\begin{split}
U_{i+1}^{-1}F\varphi&(U_{i+1})D^{-1}-I_n=\\
&(I_n-X_i+X_i^2-\cdots)(I_n+(U^{-1}_iF\varphi(U_i)D^{-1}-I_n))(I_n+D\varphi(X_i)D^{-1})-I_n.
\end{split}
\end{equation*}
It follows that $w(U_{i+1}^{-1}F\varphi(U_{i+1})D^{-1}-I_n)\geq2(i+1)c_0\geq(i+2)c_0$. Then $U=\lim_{i\rightarrow\infty}U_i$ satisfies the desired properties.
\end{proof}

\begin{corollary}\label{cor:numerical-generic}
Let $M$ be a $\varphi$-module over $\calE_K$, and let $F$ be the matrix of $\varphi$ under some basis of $M$. Then there exists $N=N(F)>0$ such that for any $\varphi$-module $M'$ over $\calE_K$ with the same rank as $M$, if $M'$ has a basis under which the matrix $F'$ of $\varphi$ satisfies $w(F-F')\geq N$, then the HN-polygons of $M'$ and $M$ coincide.
\end{corollary}
\begin{proof}
Replacing $\varphi$ with $\varphi^a$ for some suitable positive integer $a$, we may suppose that the slopes of $M$ are integral. Choose an admissible extension $L$ of $K$ with a strongly difference-closed residue field. By Proposition \ref{prop:slope-decomposition} and Lemma \ref{lem:V-n-1}, there exists an invertible matrix $U$ over $\widetilde{\calE}_L$ such that $D=U^{-1}F\varphi(U)$ is a diagonal matrix with all diagonal entries being powers of $\pi$, and their valuations are the slopes of $M\otimes_{\calE_K}\widetilde{\calE}_L$. Let $N=1-w(U^{-1})-w(U)-w(D^{-1})$. If $w(F-F')\geq N$, then
\[
w(U^{-1}F'\varphi(U)D^{-1}-I_n)=w((U^{-1}(F'-F)\varphi(U)D^{-1}))\geq1.
\]
By Lemma \ref{lem:numerical-basis-generic}, we get that there exists an invertible matrix $U'$ over $\widetilde{\calE}_L$ such that $U'^{-1}F'\varphi(U')=D$. Hence the slopes of $M'\otimes_{\calE_K}\widetilde{\calE}_L$ are the same as $M\otimes_{\calE_K}\widetilde{\calE}_L$'s. This implies that the slopes of $M'$ are the same as $M$'s by Proposition \ref{prop:generic-slope-filtration}.
\end{proof}

\subsection{Comparison of HN-polygons}


\begin{definition}
For a $\varphi$-module
$N$ over $\r_K^{\bd}$ (resp. $\widetilde{\r}_K^\bd$), the \emph{generic slope filtration} of $N$
is the HN filtration of $N\otimes_{\r_K^{\bd}}\calE_K$ (resp. $N\otimes_{\widetilde{\r}_K^\bd}\widetilde{\calE}_K$); the slope
polygon of the generic slope filtration is called the \emph{generic
HN-polygon} of $N$. The \emph{special slope filtration}
of $N$ is the HN filtration of $N\otimes_{\r_K^{\bd}}\r_K$ (resp. $N\otimes_{\widetilde{\r}_K^{\bd}}\widetilde{\r}_K$); the slope polygon of the special slope filtration is called the
\emph{special HN-polygon} of $N$.
\end{definition}

\begin{proposition}\label{prop:polygon-comparison}
If $N$ is a $\varphi$-module over $\r^{\bd}_K$ or $\widetilde{\r}^{\bd}_K$, the special HN-polygon
of $N$ lies above the generic HN-polygon of $N$ with the same endpoint.
\end{proposition}
\begin{proof}
By base change, it suffices to treat the case where $N$ is over $\widetilde{\r}^\bd_K$ and $k$ is strongly difference-closed.  Let $M=N\otimes_{\r_K^{\bd}}\widetilde{\r}_K$. Suppose that $0=N_0\subset N_1\cdots \subset N_l=N$ is the reverse filtration of $N$, and we denote by
\begin{equation}\label{eq:reverse-base-change}
0=M_0\subset M_1\cdots \subset M_l=M
 \end{equation}
 the base change of the reverse filtration. It follows from Proposition \ref{prop:reverse-filtration} that each quotient $N_i/N_{i-1}$ admits a Dieudonn\'e-Manin decomposition. This yields that each successive quotient $M_i/M_{i-1}$ is a pure $\varphi$-module over $\widetilde{\r}_K$; hence it is semistable by Proposition \ref{prop:pure-semistable-extended}. Hence (\ref{eq:reverse-base-change}) is a semistable filtration of $M$. We thus deduce the desired result by Proposition \ref{prop:HN-below}.
\end{proof}

\begin{lemma}\label{lem:H1-injective}
If $N$ is a $\varphi$-module over $\r_K^\bd$  (resp. $\widetilde{\r}_K^\bd$) whose generic slopes are all nonpositive, then the natural map $\mathrm{H}^1(N)\ra\mathrm{H}^1(N\otimes_{\r_K^\bd}\r_K)$ (resp. $\mathrm{H}^1(N)\ra\mathrm{H}^1(N\otimes_{\widetilde{\r}_K^\bd}\widetilde{\r}_K)$) is injective.
\end{lemma}

\begin{proof}
Let $M=N\otimes_{\r^\bd_K}\r_K$ (resp. $N\otimes_{\widetilde{\r}_K^\bd}\widetilde{\r}_K$). It suffices to show that for any $m\in M$, if $(\varphi-1)m\in N$, then $m\in N$. Note that $(N\otimes\widetilde{\r}_L^\bd)\cap M=N$ for any extension $L$ of $K$. Hence it suffices to show the lemma in the case when $N$ is over $\widetilde{\r}_K^\bd$ and $k$ is strongly difference-closed. Therefore by Lemma \ref{lem:lattice}, $N$ admits a $\varphi$-stable $\widetilde{\r}^\int_K$-lattice; let $\mathrm{e}=\{e_1,\dots,e_n\}$ be a basis of this lattice, and write $\varphi(\mathrm{e})=\mathrm{e} F$ for some $n\times n$ matrix $F$ over $\widetilde{\r}^\int_K$. Suppose that $m\in M$ satisfies $(\varphi-1)m\in N$. Write $m=\mathrm{e}\textbf{m}$ for some column vector $\textbf{m}$ over $\widetilde{\r}_K$. Then $F\varphi(\textbf{m})-\textbf{m}$ is over $\widetilde{\r}_K^\bd$. By \cite[Proposition 2.2.8]{Ke06}, we have that $\textbf{m}$ is over $\widetilde{\r}^\bd_K$. Hence $m\in N$.
\end{proof}

The following proposition generalizes \cite[Theorem 5.5.2]{Ke06} to the relative Frobenius lift case.
\begin{proposition}
Suppose that $k$ is strongly difference-closed. Let $N$ be a $\varphi$-module over $\r_K^\bd$ whose generic and special HN-polygons coincide. Then the HN filtrations of $N\otimes_{\r_K^\bd}\widetilde{\calE}_K$ and $N\otimes_{\r_K^\bd}\widetilde{\r}_K$, respectively, are obtained by base change from a filtration of $N$.
\end{proposition}
\begin{proof}
We follow the proof of \cite[Theorem 5.5.2]{Ke05}. It suffices to show that the first steps of the generic and special HN filtrations of $\widetilde{N}$ descend to $N$ and coincide. Let $0\subset\widetilde{N}_1\subset\cdots\subset\widetilde{N}_{l-1}\subset\widetilde{N}_l=\widetilde{N}$ be the reverse filtration of $\widetilde{N}=N\otimes_{\r_K^\bd}\widetilde{\r}_K^\bd$. As showed in the proof of Proposition \ref{prop:polygon-comparison}, the filtration
\[
0\subset\widetilde{N}_1\otimes_{\widetilde{\r}^\bd_K}\widetilde{\r}_K\subset\cdots\subset\widetilde{N}_{l-1}
\otimes_{\widetilde{\r}_K^\bd}\widetilde{\r}_K\subset
\widetilde{N}\otimes_{\widetilde{\r}_K^\bd}\widetilde{\r}_K
\]
is semistable. Since the slope polygon of this filtration is the same as the HN filtration of $N\otimes_{\r_K^\bd}\widetilde{\r}_K$, it is split by Proposition \ref{prop:filtraton-split}, yielding that the exact sequence
\[
0\rightarrow \widetilde{N}_{l-1}\rightarrow \widetilde{N}\rightarrow \widetilde{N}/\widetilde{N}_{l-1}\rightarrow0
\]
is split by Lemma \ref{lem:H1-injective}. Let $\widetilde{N}'$ be a $\varphi$-submodule of $\widetilde{N}$ lifting $\widetilde{N}/\widetilde{N}_{l-1}$. It follows that $\widetilde{N}'\otimes_{\widetilde{\r}_K^\bd}\widetilde{\calE}_K$ is isomorphic to the first step of the generic HN filtration. Thus they coincide by the uniqueness of HN filtration. Similarly, we also have that $\widetilde{N}'\otimes_{\widetilde{\r}_K^\bd}\widetilde{\r}_K$ coincides with the first step of the HN filtration of $N\otimes_{\r_K^\bd}\widetilde{\r}_K$. Hence both the first steps of the HN filtrations of $N\otimes_{\r_K^\bd}\widetilde{\calE}_K$ and $N\otimes_{\r_K^\bd}\widetilde{\r}_K$ descend to a $\varphi$-submodule $\widetilde{N}'$ of $\widetilde{N}$. To show that $\widetilde{N}'$ can be further descended to a $\varphi$-submodule of $N$, by \cite[Lemma 3.6.2]{Ke05}, it suffices to treat the case where $\rank \widetilde{N}'=1$. Choose a basis $\mathrm{e}=\{e_1,\dots,e_n\}$ of $N$. Let $v=\sum_{i=1}^n{a_ie_i}$ be a generator of $\widetilde{N}'$, and suppose that $a_1\neq0$. By Proposition \ref{prop:generic-slope-filtration}, the first step of the HN filtration of $N\otimes_{\r_K^\bd}\widetilde{\calE}_K$ descends to $N\otimes_{\r_K^\bd}\calE_K$. Hence $a_i/a_1\in\calE_K$ for $1\leq i\leq n$. Thus $a_i/a_1\in\calE_K\cap\widetilde{\r}_K^\bd=\r_K^\bd$ for each $i$, yielding $v/a_1\in N$.
\end{proof}

\section{Variation of slopes}
 In this section, we consider families of $\varphi$-modules (over $\r_K$) over affinoid spaces. All affinoid algebras are equipped with the spectral norm, and we fix a reduced affinoid space $M(A)$ over $\Q$ as the base. For any Banach algebra $B$ and $p$-adic field $L$, we assume that $|B|$ and $|L|$ are discrete, and set $B_L=B\widehat{\otimes}_{\Q}L$. We assume that $K$ is a $p$-adic field, and that $\varphi_K$ acts trivially on $\Q$. We adapt the normalization on the norm on $K$ to $|p|=p^{-1}$ to fit the standard norm on $\Q$. We also set $v(b)=\log_{|\pi|}|b|$ for any $b\in B$. Beware that $K$, which is the ``base field" of the fibers of the families, is irrelevant to $A$.

\subsection{Families of $\varphi$-modules}
\begin{definition}\label{def:Robba-ring-family}
 For any $\Q$-Banach algebra $B$, interval $I\subset(0,\infty]$, $s\in I$ and $r>0$, define the rings
\[
\calE_B, \widetilde{\calE}_B, \r_B^{\int,r}, \widetilde{\r}_B^{\int,r},\r_B^\int, \widetilde{\r}_B^\int,\r_B^{\bd,r}, \widetilde{\r}_B^{\bd,r}, \r_B^\bd,\widetilde{\r}_B^\bd, \r_B^I, \widetilde{\r}_B^I, \r_B^r,\widetilde{\r}_B^r, \r_B, \widetilde{\r}_B
\]
and $w_s, w$, and equip these rings with certain topologies by changing $K$
to $B$ in Definitions \ref{def:Robba-ring}, \ref{def:bounded-Robba-ring}, \ref{def:calE}, \ref{def:topology}, \ref{def:extended-Robba}, \ref{def:extended-bounded}, \ref{def:extended-calE}, \ref{def:extended-topology}.
We set $|\cdot|_s=|\pi|^{w(\cdot)}$ and  $|\cdot|=|\pi|^{w(\cdot)}$. We call $\r_B$ (resp. $\r_B^\bd$) the \emph{Robba ring over $B$} (resp. \emph{bounded Robba ring over $B$}) and $\widetilde{\r}_B$ (resp. $\widetilde{\r}_B^\bd$) the \emph{extended Robba ring over $B$} (resp. \emph{extended bounded Robba ring over $B$}).  Note that for general $B$ we only have
$w_s(fg)\geq w_s(f)+w_s(g), w(fg)\geq w(f)+w(g)$ and $|fg|_s\leq |f|_s|g|_s, |fg|\leq|f||g|.$
\end{definition}

\begin{proposition}\label{prop:limit-generalize}
For any $\Q$-Banach algebra $B$, we have
$\lim_{r\ra 0^+}w_r(f)=w(f)$
for any $f=\sum_{i\in\mathbb{Q}}a_iu^i\in\widetilde{\r}_B^\bd$.
\end{proposition}
\begin{proof}
The proof is similar to the proof of Proposition \ref{prop:w-bounded-additive}. Suppose $w(f)=v(a_{i_0})$ for some $i_0\in\mathbb{Q}$. For any $\epsilon>0$, set $r_0=\frac{\epsilon}{|2i_0|+1}$. We may suppose that $f\in\widetilde{\r}_B^{\bd,r_0}$ by shrinking $\epsilon$. It thus follows that for any $r\in(0,r_0]$, $w_r(f)\leq ri_0+v(a_{i_0})<w(f)+\epsilon/2$. On the other hand, choose some positive integer $N$ such that $r_0i+v(a_i)\geq w(f)$ for any $i\leq-N$. Let $r_1=\min\{r_0, \frac{\epsilon}{N}\}$. It follows that for $0<r\leq r_1$, if $i\leq-N$, then $ri+v(a_i)\geq r_0i+v(a_i)\geq w(f)$; if $i>-N$, then $ri+v(a_i)\geq w(f)-rN\geq w(f)-\epsilon$. We thus deduce that $|w_r(f)-w(f)|\leq\epsilon$ for any $r\in(0,r_1]$, proving the proposition.
\end{proof}

\begin{definition}
Let $L$ be a $p$-adic field, and let $V$ be an $L$-Banach space. A \emph{Schauder basis} of $V$ is a sequence $\{v_i\}_{i\in I}$ of elements of $V$ for a countable index set $I$ such that for every element $v\in V$ there exists a unique sequence $\{\lambda_i\}_{i\in I}$ of elements of $L$ so that
\[
v = \sum_{i\in I} \lambda_i v_i.
\]
It is further called an \emph{orthogonal basis} if
\[
|v|=\max_{i\in I}\{|\lambda_i||v_i|\}
\]
for any $v\in V$.
\end{definition}

\begin{lemma}\label{lem:orthonomal-basis}
Let $L$ be a $p$-adic field. If $V$ is an $L$-Banach space of countable type with $|V|$ discrete, then $V$ admits an orthogonal basis.
\end{lemma}
\begin{proof}
Since $|V|$ is discrete, there is a finite sequence $1\leq c_1<c_2<\cdots<c_m<p$ such that
\[
|V|=\{p^nc_j|n\in\mathbb{Z}, 1\leq j\leq m\}.
\]
Put $h=\min\{c_2/c_1,\dots,c_m/c_{m-1},p/c_m\}$. Choose some $h'\in(1,h)$. By \cite[2.7.2/3]{BGR84}, $V$ admits a
Schauder basis $\{v_i\}_{i\in I}$ such that
\[
h'|\sum_{i\in I} \lambda_i v_i|\geq \max_{i\in I}\{|\lambda_i||v_i|\}
\]
for any convergent sum $\sum_{i\in I} \lambda_i v_i$. However, since $|\sum_{i\in I} \lambda_i v_i|\leq\max_{i\in I}\{|\lambda_i||v_i|\}$, if they are not equal, we must have
\[
h|\sum_{i\in I} \lambda_i v_i|\leq \max_{i\in I}\{|\lambda_i||v_i|\};
\]
this yields a contradiction. Hence $|\sum_{i\in I} \lambda_i v_i|=\max_{i\in I}\{|\lambda_i||v_i|\}$, yielding that $\{v_i\}_{i\in I}$ is an orthogonal basis.
\end{proof}

\begin{remark}
Note that $A_L$ is an affinoid algebra over $L$. Hence it is of countable type as an $L$-Banach space, and $|A_L|$ is discrete. Thus Lemma \ref{lem:orthonomal-basis} implies that $A_L$ admits an orthogonal basis over $L$.
\end{remark}

\begin{lemma}\label{lem:comparison}
Let $L$ be a $p$-adic field, and let $B$ be a $\Q$-Banach algebra. For $R\in\{\calE,\r^{\bd,r},\r^I\}$  and $\widetilde{R}\in \{\widetilde{\calE},\widetilde{\r}^{\bd,r},\widetilde{\r}^I\}$ where $I\subset(0,\infty]$ is a closed interval, the natural maps
\[
i: B\otimes_{\Q}R_{L}\rightarrow R_{B_L}, \quad \widetilde{i}: B\otimes_{\Q}\widetilde{R}_{L}\rightarrow\widetilde{R}_{B_L}
\]
are isometric embeddings of $L$-Banach algebras.
For $R=\r^r$ and $\widetilde{R}=\widetilde{\r}^{r}$, the natural maps
\[
i: B\otimes_{\Q}R_{L}\rightarrow R_{B_L}, \quad \widetilde{i}: B\otimes_{\Q}\widetilde{R}_{L}\rightarrow\widetilde{R}_{B_L}
\]
are isometric embeddings of $L$-Fr\'echet spaces. Furthermore, $i$ always has dense image. Hence $i$ induces an isomorphism $B\widehat{\otimes}_{\Q}R_{L}\cong R_{B_L}$ for any $R\in\{\calE,\r^{\bd,r},\r^I,\r^r\}$, and $\widetilde{i}$ induces an isometric embedding $B\widehat{\otimes}_{\Q}\widetilde{R}_{L}\hookrightarrow\widetilde{R}_{B_L}$ for any $\widetilde{R}\in\{\widetilde{\calE}, \widetilde{\r}^{\bd,r},\widetilde{\r}^I, \widetilde{\r}^r\}$.
\end{lemma}
\begin{proof}
 For $R=\calE,\r^{\bd,r},\r^I$ and $\widetilde{R}=\widetilde{\calE},\widetilde{\r}^{\bd,r},\widetilde{\r}^I$, we denote by $|\cdot|_1$ the tensor product norms on
Banach algebras $B\widehat{\otimes}_{\Q}R_{L}$ and $B\widehat{\otimes}_{\Q}\widetilde{R}_{L}$, and denote by $|\cdot|_{2}$ the norms of the Banach algebras $R_{B}$ and $\widetilde{R}_B$.
Fix some $s\in(0, r]$. For $R=\r^r$ and $\widetilde{R}=\widetilde{\r}^r$, we denote $|f|_s$ by $|f|$ for any $f\in R_L$ and $f\in\widetilde{R}_L$. We denote by $|\cdot|_1$ the tensor products of the norm on $B$ and $|\cdot|_s$ on $R_{L}$ and $\widetilde{R}_{L}$,
and denote by $|\cdot|_{2}$ the norms $|\cdot|_s$ on $R_{B_L}$ and $\widetilde{R}_{B_L}$.

For any $f\in B\otimes_{\Q}\widetilde{R}_{L}$, if we write $f=\sum_{j=1}^nb_j\otimes f_j$, then it is clear that
\[
|\widetilde{i}(f)|_{2}=|\sum_{j=1}^nb_j\otimes f_j|\leq\max\{|b_j||f_j|\};
\]
hence $|\widetilde{i}(f)|_2\leq |f|_1$ by the definition of tensor product norms. On the other hand, let $V$ be the $\Q$-subspace of $B$ generated by $b_1,\dots,b_n$. By Lemma \ref{lem:orthonomal-basis}, $V$ admits an orthogonal basis $\{v_1,\dots, v_m\}$. We may rewrite $f=\sum_{j=1}^m v_j\otimes f_j'$ for some $f_j'\in \widetilde{R}_{L}$. For any $i\in\mathbb{Q}$, let $c_i$ and $c_{ij}$ be the $i$-th coefficients of $f$ and $f_j'$; then $c_i=\sum_{j=1}^mv_jc_{ij}$. Hence $|c_i|=\max\{|v_j||c_{ij}|\}$. This implies that $|\widetilde{i}(f)|_2=\max\{|v_j||f_j'|\}$. This yields $|\widetilde{i}(f)|_2\geq|f|_1$. Hence $|\widetilde{i}(f)|_2=|f|_1$. The proof for $i$ is similar. The rest of the lemma is obvious.
\end{proof}

Henceforth for any $\Q$-Banach algebra $B$ and $\widetilde{R}\in\{\widetilde{\calE},\widetilde{\r}^{\bd,r},
\widetilde{\r}^{I},
\widetilde{\r}^r\}$, we view
$B\widehat{\otimes}_{\Q}\widetilde{R}_{L}$ as a subalgebra of $\widetilde{R}_{B_L}$ via $\widetilde{i}$

\begin{definition}
For any $\Q$-Banach algebra $B$, $p$-adic field $L$ and $\widetilde{R}\in\{\widetilde{\r}^{\bd}, \widetilde{\r}\}$, we set
\[
B\widehat{\otimes}_{\Q}\widetilde{R}_L=\cup_{r>0}B\widehat{\otimes}_{\Q}\widetilde{R}^r_L.
\]
which is a subalgebra of $\widetilde{R}_{B_L}$.
\end{definition}

\begin{lemma}\label{lem:intersection}
Let $B$ be a $\Q$-Banach algebra of countable type, and let $L$ be a $p$-adic field.  Suppose that $S$ is a closed subspace of $\widetilde{\calE}_{L}$ and put $S'=S\cap\widetilde{\r}_{L}^{\bd,r}$, which is a closed subspace of $\widetilde{\r}_{L}^{\bd,r}$. Then
$$(B\widehat{\otimes}_{\Q}S)\cap\widetilde{\r}^{\bd,r}_{B_L}=B\widehat{\otimes}_{\Q}S'.$$
\end{lemma}
\begin{proof}
We only need to show $(B\widehat{\otimes}_{\Q}S)\cap\widetilde{\r}^{\bd,r}_B\subseteq B\widehat{\otimes}_{\Q}S'.$ Since $B$ is of countable type, by Lemma \ref{lem:orthonomal-basis}, $B$ admits an orthogonal basis $\{v_j\}_{j\in J}$ over $\Q$; then $\{v_j\}_{j\in J}$ is also an orthogonal basis of $B\widehat{\otimes}_{\Q}L$ over $L$.

Now suppose $f=\sum_{i\in\mathbb{Q}}a_iu^i\in(B\widehat{\otimes}_{\Q}S)\cap\widetilde{\r}^{\bd,r}_{B_L}$. We may write $f$ as a convergent sum $f=\sum_{j\in \mathbb{N}} v_j\otimes f_j$ in $B\widehat{\otimes}_{\Q}S$ where each $f_j\in S$. Since $f\in \widetilde{\r}^{\bd,r}_{B_L}$, it follows that each $f_j\in \widetilde{\r}^{\bd,r}_L$ and satisfies $|v_j||f_j|\leq |f|, |v_j||f_j|_r\leq |f|_r$. Hence all $f_j$ belong to $S'=S\cap\widetilde{\r}_{L}^{\bd,r}$. It remains to show that the sum $\sum_{j\in \mathbb{N}} v_j\otimes f_j$ is convergent in $B\widehat{\otimes}_{\Q}S'$. For any $\epsilon>0$, choose $N<0$ so that $\max\{|a_i|,|a_iu^i|_r)\}<\epsilon$ if $i< N$. Choose $m\in\mathbb{N}$ so that $|v_j\otimes f_j|<\epsilon |\pi|^{-Nr}$ if $j\geq m$. We claim that
\[
\max\{|v_j\otimes f_j|, |v_j\otimes f_j|_r\}<\epsilon
\]
for each $j\geq m$. In fact, for any $j\geq m$, if we write $v_j\otimes f_j=\sum_{i\in\mathbb{Q}}b_{ji}u^i$ where $b_{ji}\in B$, then
\[
\max\{|b_{ji}|, |b_{ji}u^i|_r\}\leq \max\{|a_i|,|a_iu^i|_r\}<\epsilon
\]
if $i<N$, and
\[
\max\{|b_{ji}|, |b_{ji}u^i|_r\}< \max\{\epsilon |\pi|^{-Nr}, \epsilon |\pi|^{(i-N)r}\}\leq \epsilon
\]
if $i\geq N$. This yields the claim. Hence $f\in B\widehat{\otimes}_{\Q}S'$.
\end{proof}

\begin{definition}\label{def:phi-action-B}
Let $\varphi_A:A_K\rightarrow A_K$ be the continuous extension of $\mathrm{id}\otimes\varphi_K$ on $A\otimes_{\Q}K$. We set the $\varphi$-action on $\r_{A_K}$ as the continuous extension of $\mathrm{id}\otimes\varphi$ on $A\otimes_{\Q}\r_K$. We set the $\varphi$-action on $\widetilde{\r}_{A_K}$ as
\[
\varphi(\sum_{i\in\mathbb{Q}}a_iu^i)=\sum_{i\in\mathbb{Q}}\varphi_A(a_i)u^{qi}.
\]
\end{definition}

\begin{remark}
The embedding $\tau_K:\r_K\rightarrow\widetilde{\r}_K$ induces an embedding
\[
\tau_A:\r_{A_K}=\cup_{r>0}A\widehat{\otimes}_{\Q}\r^r_K\rightarrow A\widehat{\otimes}_{\Q}\widetilde{\r}_{K}=\cup_{r>0}A\widehat{\otimes}_{\Q}\widetilde{\r}^r_K
\]
by tensoring with the identity on $A$ and taking completion; then $\tau_A$ is $\varphi$-equivariant because $\tau_K$ is $\varphi$-equivariant. By Lemma \ref{lem:comparison}, $\tau_A$ further induces a $\varphi$-equivariant embedding $\r_{A_K}\rightarrow\widetilde{\r}_{A_K}$ which we again denote by $\tau_A$.
\end{remark}

\begin{definition}\label{def:vector-bundle}
By a \emph{vector bundle} over $\r^r_{A_K}\cong A_K\widehat{\otimes}_K\r^r_{K}$,
we mean a locally free coherent sheaf over the product
of the annulus $0 < v_p(T) \leq r$ over $K$ with $M(A_K)$
in the category of rigid analytic spaces over $K$.
In case $A_K$ is disconnected, we require that the rank be constant.
By a vector bundle over $\r_{A_K}$, we will mean an object in the direct limit
as $r \to 0$ of the categories of vector bundles over
$\r^r_{A_K}$. For any morphism of affinoid algebras $A\ra B$ and a vector bundle $M_A$ over $\r_{A_K}$, we denote by $M_A\otimes_{\r_{A_K}}\r_{B_K}$ the base change of $M_A$ to a vector bundle over $\r_{B_K}$.
\end{definition}

\begin{definition}\label{def:phi-module-family}
By \emph{a family of $\varphi$-modules} over $\r_{A_K}^\bd$ (resp. $\r_{A_K}$), we mean a finite locally free module $N_A$ over $\r_{A_K}^\bd$ (resp. a vector bundle $M_A$ over $\r_{A_K}$) equipped with an isomorphism $\varphi^*N_A\rightarrow N_A$ (resp. $\varphi^*M_A\rightarrow M_A$), viewed as a semilinear action $\varphi$ on $N_A$ (resp. $M_A$). In case $A_K$ is disconnected, we require that the rank be constant.
\end{definition}

\begin{remark}\label{rem:vector-bundle}
For $A=\Q$, every vector bundle over $\r_K$ is represented by a finite free $\r_K$-module by the B\'ezout property of $\r_K$ (\cite[Theorem 2.8.4]{Ke05}). Hence the category of families of $\varphi$-modules over $\r_K$ coincides with the category of $\varphi$-modules over $\r_K$. For general $A$, it is only known that any family of $\varphi$-modules over $\r_{A_K}$ is $A$-locally free (Corollary \ref{cor:base-lifting}).
\end{remark}

\begin{definition}
Let $N_A$ (resp. $M_A$) be a family of $\varphi$-modules over $\r_{A_K}^\bd$ (resp. $\r_{A_K}$). For any $x\in M(A)$,  $N_A$ (resp. $M_A$) specializes to a $\varphi$-module
$$N_x=N_A\otimes_{\r^\bd_{A_K}}(k(x)\otimes_{\Q}\r^\bd_{K})$$ over $k(x)\otimes_{\Q}\r^\bd_{K}$ (resp.
$M_x=M_A\otimes_{\r_{A_K}}(k(x)\otimes_{\Q}\r_{K})$ over $k(x)\otimes_{\Q}\r_{K}$). We denote by $p_x$ the natural projection map $N_A\rightarrow N_x$ (resp. $M_A\rightarrow M_x$).
\end{definition}

\begin{definition}
For a family of $\varphi$-modules $M_A$ over $\r_{A_K}$, a \emph{model} of $M_A$ is a sub-family of $\varphi$-modules $N_A$ over
$\r_{A_K}^{\bd}$ such that $N_A\otimes_{\r_{A_K}^{\bd}}\r_{A_K}=M_A$.
\end{definition}

\begin{definition}
Let $N_A$ be a family of $\varphi$-modules over $\r^\bd_{A_K}$. For $c,d\in\mathbb{Z}$ with $d>0$, a  \emph{$(c,d)$-pure model} of $N_A$ is a finite locally free sub-$\r^\int_{A_K}$-module $N_A'$ of $N_A$ with $N_A'\otimes_{\r^{\int}_{A_K}}\r^\bd_{A_K}=N_A$ so that the $\varphi$-action on $N'_A$ induces an isomorphism $\pi^{c}(\varphi^d)^*N'_A\cong N'_A$. For a family of $\varphi$-modules $M_A$ over $\r_{A_K}$, a \emph{$(c,d)$-pure model} of $M_A$ is a $(c,d)$-pure model of a model of $M_A$. For $s\in\mathbb{Q}$, we say that $N_A$ (resp. $M_A$) is \emph{globally pure of slope $s$} if $N_A$ (resp. $M_A$) admits a $(c,d)$-pure model for some (hence any) $c,d\in\mathbb{Z}$ with $d>0$ and $s=c/d$. If $s=0$, we also say that $N_A$ (resp. $M_A$) is \emph{globally \'etale}, and a $(0,1)$-pure model is also called an \emph{\'etale model}.
\end{definition}

\begin{proposition}\label{prop:HN-polygon-multifield}
Let $M_A$ (resp. $N_A$) be a family of $\varphi$-modules over $\r_{A_K}$ (resp. $\r^\bd_{A_K}$), and let $x\in M(A)$. Suppose that
\[
k(x)\otimes_{\Q}K\cong\oplus_{i=1}^nK_i
\]
where each $K_i$ is a finite field extension of $K$. Then the following are true.
\begin{enumerate}
\item[(1)]
The induced $\varphi$-action on each $K_i$ is an automorphism.
\item[(2)]
Let $M_{x,i}=M_x\otimes_{k(x)\otimes_{\Q}\r_K}\r_{K_i}$ (resp. $N_{x,i}=N_x\otimes_{k(x)\otimes_{\Q}\r^\bd_K}\r^\bd_{K_i}$) for $1\leq i\leq n$. Then the HN-polygons of all $M_{x,i}$ (generic HN-polygons of all $N_{x,i}$) coincide.
\end{enumerate}
\end{proposition}
\begin{proof}
Since the $\varphi$-action is an automorphism on $k(x)\otimes_{\Q}K$, it is an automorphism on each $K_i$. This yields $(1)$.
By Propositions \ref{prop:robba-extension} (resp. Proposition \ref{prop:generic-slope-filtration}), we see that HN-polygons of $\varphi$-modules over Robba rings (generic HN-polygons of $\varphi$-modules over bounded Robba rings) are stable under base change. By passing to normal closure of the field extension $k(x)/\Q$, we may suppose that $k(x)$ is Galois over $\Q$. In this case, $\mathrm{Gal}(k(x)/\Q)$ acts transitively on the set $\{K_i\}_{1\leq i\leq n}$; hence it acts transitively on $\{M_{x,i}\}_{1\leq i\leq n}$ (resp. $\{N_{x,i}\}_{1\leq i\leq n}$). Furthermore, this action commutes with $\varphi$. This implies that all $M_{x,i}$ (resp. $N_{x,i}$) have the same HN-polygon (generic HN-polygon), yielding (2).
\end{proof}

In the situation of Proposition \ref{prop:HN-polygon-multifield},  it is clear that $M_x$ (resp. $N_x$) is isomorphic to the direct sum of all $M_{x,i}$ (resp. $N_{x,i}$). We call each $M_{x,i}$ (resp. $N_{x,i}$) a \emph{component} of $M_x$ (resp. $N_x$). We set the \emph{slopes} and \emph{HN-polygon} of $M_x$ (resp. \emph{generic slopes} and \emph{generic HN-polygon} of $N_x$) as the slopes and HN-polygon of $M_{x,i}$ (resp. generic slopes and generic HN-polygon of $N_{x,i}$). We set the \emph{HN filtration} of $M_x$ (resp. \emph{generic HN filtration} of $N_x$) as the direct sum of the HN filtrations of all $M_{x,i}$ (generic HN filtrations of all $N_{x,i}$).

\begin{definition}
Let $M_A$ be a family of $\varphi$-modules over $\r_{A_K}$, and let $N_A$ be a model of it. We call $N_A$ a \emph{good model} if
for every $x\in M(A)$, the generic and special HN-polygons of $N_x$ coincide, i.e. the generic HN-polygon of $N_x$ coincides with the HN-polygon of $M_x$.
\end{definition}

\subsection{Semicontinuity of HN-polygons}
\begin{convention}
Let $r_\varphi$ be as in Lemma \ref{lemma:degree-q}. It follows that for $0<r< r_{\varphi}$ and $a\in\r_K$, if $\varphi(a)\in \r_K^{r/q}$, then $a\in \r_K^r$. Furthermore, by Remark \ref{rem:phi-r-to-qr}, we may shrink $r_{\varphi}$ so that $\varphi$ maps $\r_K^{r}$ to $\r^{r/q}_K$ for $0<r< r_{\varphi}$. Hence for $0<r< r_\varphi$, we have that $\varphi(a)\in\r_K^{r/q}$ if and only if $a\in\r_K^r$, and that $w_{r/q}(\varphi(a))=w_r(a)$ for any $a\in \r_K^r$.
\end{convention}

\begin{proposition}\label{prop:shrinking}
For any $\Q$-Banach algebra $S$ and $x\in M(A)$, the natural projection map
\[
\rho_x:A\widehat{\otimes}_{\Q}S\rightarrow k(x)\otimes_{\Q}S
\]
is surjective and $\ker(\rho_x)=\mathfrak{m}_x(A\widehat{\otimes}_{\Q}S)$ where $\mathfrak{m}_x$ is the maximal ideal of $A$ corresponding to $x$.
Furthermore, for any $\lambda>0$, there exists a Weierstrass subdomain $M(B)$ of $M(A)$ containing $x$ such that if $f\in\ker(\rho_x)$, then the norm of $f$ in $B\widehat{\otimes}_{\Q}S$ is no more than $\lambda$ times the norm of $f$ in $A\widehat{\otimes}_{\Q}S$.
\end{proposition}
\begin{proof}
By Hahn-Banach theorem for Banach spaces over discretely valued fields (\cite[Proposition 10.5]{S02}), the exact sequence
\[
0\rightarrow\mathfrak{m}_x\rightarrow A\rightarrow k(x)\rightarrow0
\]
splits as $\Q$-Banach spaces. This yields the exact sequence
\[
0\rightarrow\mathfrak{m}_x\widehat{\otimes}_{\Q}S\rightarrow A\widehat{\otimes}_{\Q}S
\rightarrow k(x)\otimes_{\Q}S\rightarrow0.
\]
This shows that $\rho_x$ is surjective.

Choose a finite set of generators $b_1,\dots,b_m$ of $\mathfrak{m}_x$ as an $A$-module. By the open mapping theorem for Banach spaces over discretely valued fields (\cite[Proposition~8.6]{S02}), the surjective map of $\Q$-Banach spaces $A^m\rightarrow \mathfrak{m}_x$ defined by $(a_1,\dots,a_m)\mapsto\sum_{i=1}^ma_ib_i$ is open. Hence there exists $c>0$ such that for any $a\in\mathfrak{m}_x$, there exist $a_1,\dots,a_m\in A$ with $|a_i|\leq c|a|$ such that $a=\sum_{i=1}^ma_ib_i$. Choose some nonzero $z\in\Q$ with $|z|=\lambda'\leq\lambda/c$. Set
\[
B=A\langle X_1,\dots,X_m\rangle/(zX_1-b_1,\dots,zX_m-b_m),
\]
then $M(B)=\{y\in M(A)||b_i(y)|\leq \lambda', 1\leq i\leq m\}$ is a Weierstrass subdomain containing $x$.
Let $\{v_i\}_{i\in I}$ be an orthogonal basis of $\mathfrak{m}_x$ over $\Q$.
Now if $f\in\ker(\rho_x)$, write $f=\sum_{i\in I}v_i\otimes g_i$ with $g_i\in S$; then $|f|=\max_{i\in I}\{|v_i||g_i|\}$. For each $i\in I$, choose $a_{1i},\dots,a_{mi}\in A$ so that $\sum_{j=1}^ma_{ji}b_j=v_i$ with $|a_{ji}|\leq c|v_i|$ for $1\leq j\leq m$. Put $f_j=\sum_{i\in I}a_{ji}g_i$ for $1\leq j\leq m$. It then follows that
\[
|f_j|\leq\max_{i\in I}\{|a_{ji}||g_i|\}\leq c\max_{i\in I}\{|v_i||g_i|\}=c|f|
\]
and $f=\sum_{j=1}^mb_jf_j$. This implies that $f\in\mathfrak{m}_x(A\widehat{\otimes}_{\Q}S)$. Furthermore, since the norms of $b_j$'s in $B$ are no more than $\lambda'$, the norm of $f$ in $B\widehat{\otimes}_{\Q}S$ is no more than $\lambda'c$, which is no more than $\lambda$ times the norm of $f$ in $A\widehat{\otimes}_{\Q}S$.
\end{proof}

\begin{corollary}\label{cor:lifting}
Let $S$ be a $\Q$-Banach algebra. Let $x\in M(A)$, and let $F_x$ be an invertible matrix over $k(x)\otimes_{\Q}S$. Let $F$ be a matrix over  $A\widehat{\otimes}_{\Q}S$ lifting $F_x$. Then there exists a Weierstrass subdomain $M(B)$ of $M(A)$ containing $x$ such that $F$ is invertible over $B\widehat{\otimes}_{\Q}S$.
\end{corollary}
\begin{proof}
Using the first part of Proposition \ref{prop:shrinking}, we lift $F_x^{-1}$ to a matrix $F'$ over $A\widehat{\otimes}_{\Q}S$. Note that $F'F-I$ vanishes at $x$. It therefore follows from the second part of Proposition \ref{prop:shrinking} that there exists a Weierstrass subdomain $M(B)$ containing $x$ such that the norm of $F'F-I$, viewed as a matrix over $B\widehat{\otimes}_{\Q}S$, is less than $1$. This implies that $F'F$ is invertible over $B\widehat{\otimes}_{\Q}S$; hence $F$ is invertible over $B\widehat{\otimes}_{\Q}S$.
\end{proof}

\begin{lemma}\label{lem:basis-extension}
Let $0<r<r_{\varphi}$, and let $M_A^r$ be a vector bundle over $\r^r_{A_K}$ equipped with an isomorphism $\varphi^*M_A^r\cong M_A^r\otimes_{\r_{A_K}^{r}}\r_{A_K}^{r/q}$ as vector bundles over
$\r_{A_K}^{r/q}$. Suppose that there exists a basis $e_1,\dots,e_n$ of $M_A^r\otimes_{\r^r_{A_K}}\r^{[r/q,r]}_{A_K}$ over
$\r^{[r/q,r]}_{A_K}$ on which $\varphi$ acts via an invertible matrix $F$ over $\r^{r/q}_{A_K}$, then $e_1,\dots,e_n$ extends to a basis of $M_A^r$.
\end{lemma}
\begin{proof}
We will proceed by induction on $l$ to show that one can extend $e_1,\dots,e_n$ to a basis of $M_A^r\otimes_{\r^r_{A_K}}\r^{[r/q^{l},r]}_{A_K}$
for each $l\geq1$. The initial case is already known by assumption. Suppose that the claim is true for some $l-1\geq1$. Write $\mathrm{e}=(e_1,\dots,e_n)$. Since $\varphi(\mathrm{e})$ is equal to $\mathrm{e}F$ in $M_A^r\otimes_{\r^r_{A_K}}\r^{[r/q,r/q]}_{A_K}$, they are equal in $M_A^r\otimes_{\r^r_{A_K}}\r^{[r/q^{l-1},r/q]}_{A_K}$ a priori.  Then using the relation $\mathrm{e}=\varphi(\mathrm{e})F^{-1}$, we extend $\mathrm{e}$ to $M_A^r\otimes_{\r^r_{A_K}}\r^{[r/q^{l},r]}_{A_K}$ by gluing $\mathrm{e}$ and $\varphi(\mathrm{e})F^{-1}$. It remains to prove that $\mathrm{e}$ generates $M_A^r\otimes_{\r^r_{A_K}}\r^{[r/q^{l},r]}_{A_K}$. Let $M'$ be the coherent subsheaf of $M_A^r\otimes_{\r^r_{A_K}}\r^{[r/q^{l},r]}_{A_K}$ generated by
$\mathrm{e}$. Note that $\varphi(\mathrm{e})$ is a basis of $M_A^r\otimes_{\r^r_{A_K}}\r^{[r/q^{l},r/q]}_{A_K}$ by the isomorphism $\varphi^*M_A^r\cong M_A^r\otimes_{\r_{A_K}^{r}}\r_{A_K}^{r/q}$. It therefore follows
$$M'|_{M(\r_{A_K}^{[r/q^{l-1},r]})}=M_A^r\otimes_{\r^r_{A_K}}\r^{[r/q^{l-1},r]}_{A_K},
\quad M'|_{M(\r_{A_K}^{[r/q^{l},r/q]})}=M_A^r\otimes_{\r^r_{A_K}}\r^{[r/q^l,r/q]}_{A_K}.$$
Hence $M'=M_A^r\otimes_{\r^r_{A_K}}\r^{[r/q^{l},r]}_{A_K}$.
\end{proof}

\begin{lemma}\label{lem:locally-free}
Let $M_A$ be a family of $\varphi$-modules over $\r_{A_K}$ such that it is represented by a vector bundle $M_A^r$ over $\r_{A_K}^r$ for some $0<r<r_{\varphi}$. Let $x\in M(A)$, and let $\mathrm{e}_x$ be a basis of
\[
M_x^{[r/q,r]}=M_A^r\otimes_{\r_{A_K}^r}(k(x)\otimes_{\Q}\r^{[r/q,r]}_K)
\]
over $k(x)\otimes_{\Q}\r^{[r/q,r]}_K$. Suppose that $\e$ is a lift of $\e_x$ in $M_A^{[r/q,r]}=M^r_A\otimes_{\r^r_{A_K}}\r_{A_K}^{[r/q,r]}$. Then there exists a Weierstrass subdomain $M(B)$ containing $x$ such that $\e$ is a basis of $M^{[r/q,r]}_B=M^r_A\otimes_{\r^r_{A_K}}\r^{[r/q,r]}_{B_K}$ over $\r^{[r/q,r]}_{B_K}$.
\end{lemma}
\begin{proof}
Since $M_A^{[r/q,r]}$ is a coherent sheaf over $M(\r_{A_K}^{[r/q,r]})$, we choose a finite set of generators $\mathrm{v}=(v_1,\dots,v_m)$ of it.
We lift the transformation matrix between the image of $\mathrm{v}$ in $M_x^{[r/q,r]}$ and $\e_x$ to a matrix $U$ over $\r_{A_K}^{[r/q,r]}$. It is clear that the image of $\e U-\mathrm{v}$ in $M^{[r/q,r]}_x$ vanishes. Since $M_A^{[r/q,r]}$ is a finite locally free $\r_{A_K}^{[r/q,r]}$-module, by Proposition \ref{prop:shrinking}, we deduce that $\e U-\mathrm{v}\in \mathfrak{m}_xM_A^{[r/q,r]}$; thus there is a square matrix $W$ over $\mathfrak{m}_x\r_{A_K}^{[r/q,r]}$ such that $\e U-\mathrm{v}=\mathrm{v}W$. By Proposition \ref{prop:shrinking}, we choose a Weierstrass subdomain $M(B)$ containing $x$ such that $\min\{w_{r/q}(W),w_r(W)\}>0$ over $\r^{[r/q,r]}_{B_K}$. This implies that $I+W$ is invertible over $\r^{[r/q,r]}_{B_K}$. Hence $\e U(I+W)^{-1}=\mathrm{v}$, yielding that $\e$ generates $M_B^{[r/q,r]}$. Since the number of entries of $\e$ is equal to the rank of  $M_B^{[r/q,r]}$, we get that $\e$ is a basis of  $M_B^{[r/q,r]}$ over $\r^{[r/q,r]}_{B_K}$.
\end{proof}

The following lemma is based on \cite[Lemma 6.1.1]{Ke05}.
\begin{lemma} \label{lem:modification}
For $r\in(0,r_\varphi/q)$, let $D$ be an
invertible $n\times n$ matrix over $\r_{A_K}^{[r, r]}$, and put
$h=-w_r(D)-w_r(D^{-1})$. Let $F$ be an $n\times n$ matrix over
$\r_{A_K}^{[r, r]}$ such that $w_r(FD^{-1}-I_n)\geq c+h/(q-1)$ for a
positive number $c$. Then for any positive integer $k$ satisfying
$2(q-1)k\leq c$, there exists an invertible $n\times n$ matrix
$U$ over $\r_{A_K}^{[r, qr]}$ such that $U^{-1}F\varphi(U)D^{-1}-I_n$ has
entries in $\pi^k\r_{A_K}^{\int, r}$ and
$w_r(U^{-1}F\varphi(U)D^{-1}-I_n)\geq c+h/(q-1)$.
\end{lemma}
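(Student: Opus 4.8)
The plan is to prove this by a successive approximation argument, building the matrix $U$ as a convergent product of elementary corrections, exactly in the style of \cite[Lemma 6.1.1]{Ke05} but tracking the behavior over the affinoid base $A$. The point is that since $\phi$ fixes $A$ pointwise (more precisely, acts isometrically on $A$ and stabilizes every prime ideal), all the estimates that hold entry-by-entry in Kedlaya's original argument continue to hold for the norms $w_r$ on $\r_A^{[r,r]}$, because $w_r$ is submultiplicative ($w_r(fg)\geq w_r(f)+w_r(g)$) and $\phi$ preserves $w_r$ on $\r_A^{\int,r}$. So the strategy is: set $U_0 = I_n$, and given $U_m$ with $V_m := U_m^{-1}F\phi(U_m)D^{-1} - I_n$ having entries in $\pi_K^{\ell_m}\r_A^{\int,r}$ and $w_r(V_m)\geq c+h/(q-1)$, solve for a correction $X_m$ with entries in $\pi_K^{\ell_m}\r_A^{\int,r}$ so that $U_{m+1} = U_m(I_n + X_m)$ improves the congruence, and show $U_{m+1}$ still satisfies the $w_r$ bound. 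The infinite product $U = U_0(I_n+X_0)(I_n+X_1)\cdots$ converges $\pi_K$-adically and in the $w_r$-norm, has entries a priori in $\r_A^{\int,r}$, and one checks invertibility and that $U$ actually lands in $\r_A^{[r,qr]}$ because $\phi(U)$ involves $S$ with $w(S - T^q)>0$, so applying $\phi$ to a function convergent on $v_K(T)=r$ produces a function convergent on $v_K(T) = qr$.

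The key computational step is the choice of $X_m$. Writing $V = V_m$, I want $X$ with $I_n - X + V + DXD^{-1} \equiv I_n$ modulo the next power of $\pi_K$, i.e. I need to solve the "twisted Sylvester" equation $X - DXD^{-1} \approx V$ in a way that does not lose too much in the $w_r$-norm. Here is where the hypothesis $r < r_0/q$ and the gap $h = -w_r(D) - w_r(D^{-1})$ enter: writing $X - D\phi(X)D^{-1}$ and expanding $D\phi(X)D^{-1}$ geometrically is not quite right since $\phi$ is not the identity, so instead one treats the operator $X\mapsto D\phi(X)D^{-1}$ and inverts $I - (\text{this operator})$ via a Neumann series, which converges precisely because $w_r(D\phi(X)D^{-1}) \geq w_r(X) - h$ and the input $V$ already has $w_r(V)\geq c + h/(q-1)$ with $c$ large enough (this is the role of $2(q-1)(k-1)\leq c$: it guarantees room for $k$ steps of halving-type loss). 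At each step the $\pi_K$-adic order of $X_m$ increases by a definite amount, so after finitely many steps we reach order $\geq k$, and thereafter the correction stays $\pi_K$-adically small; the bound $w_r(U^{-1}F\phi(U)D^{-1} - I_n)\geq c + h/(q-1)$ is maintained throughout because each correction costs at most $h/(q-1)$ in $w_r$ while the $\pi_K$-adic gain compensates.

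The main obstacle is the bookkeeping that keeps \emph{both} invariants alive simultaneously: the $\pi_K$-adic congruence (entries in $\pi_K^k\r_A^{\int,r}$) and the analytic estimate ($w_r(\cdot)\geq c+h/(q-1)$). These pull in opposite directions — improving the $\pi_K$-adic order by multiplying by $I_n + X$ with $w(X)>0$ degrades the $w_r$-estimate, while keeping the $w_r$-estimate tight limits how much $\pi_K$-adic improvement one can extract per step. Kedlaya's inequality $2(q-1)(k-1)\leq c$ is exactly the arithmetic that makes this balance work for $k$ steps, and I expect the proof to consist of verifying, by a careful induction, that this is enough; the affinoid base $A$ contributes no new difficulty here precisely because $\phi$ acts isometrically and trivially on $\mathrm{Spec}\,A$, so no Frobenius-twisting of the base coordinates interferes with the estimates. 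Finally, after the induction, one notes that the limit $U$ automatically has the claimed convergence radius $[r,qr]$ and satisfies $U^{-1}F\phi(U)D^{-1} - I_n \in \pi_K^k\r_A^{\int,r}$ with the stated $w_r$-bound, which is the assertion.
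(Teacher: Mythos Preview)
Your overall framework—iterating $U_{l+1}=U_l(I_n+X_l)$ from $U_0=I_n$—matches the paper, and you are right that the base $A$ adds nothing new. But the Neumann series for $(I-D\phi(\cdot)D^{-1})^{-1}$ cannot be made to converge here. Your estimate $w_r(D\phi(X)D^{-1})\geq w_r(X)-h$ is already wrong: what one actually has is $w_r(\phi(X))=w_{qr}(X)$, hence $w_r(D\phi(X)D^{-1})\geq w_{qr}(X)-h$. Even granting your version, a lower bound of the form $\geq w_r(X)-h$ would make the operator \emph{expanding} in the $w_r$-topology, so the geometric series diverges rather than converges; and more basically $\phi$ carries $\r_A^{[r,r]}$ into $\r_A^{[r/q,r/q]}$ (not into $\r_A^{[qr,qr]}$ as your last paragraph suggests), so $\phi^j(V)$ for $j\geq 1$ is not even defined in the ring you are working in. Consequently one cannot ``solve'' $X-D\phi(X)D^{-1}=V$ in $\r_A^{[r,r]}$, and the $\pi_K$-adic order of the error does \emph{not} climb by a fixed amount per step as you assert.

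The paper's mechanism is essentially the opposite of what you sketch. One sets $X_l$ to be the truncation of $F_lD^{-1}-I_n=\sum_m V_mT^m$ to those terms with $v_A(V_m)\leq k-1$; thus $X_l$ always has $\pi_K$-adic valuation $\leq k-1$. What increases along the iteration is $c_l=\min_{i\leq k-1}v_{i,r}(F_lD^{-1}-I_n)-h/(q-1)$, the $w_r$-size of the $\pi_K$-adically shallow part, and one proves $c_l\geq\tfrac{l+1}{2}c$. The gain comes from
\[
w_r(D\phi(X_l)D^{-1})\ \geq\ w_{qr}(X_l)-h\ \geq\ q\bigl(c_l+h/(q-1)\bigr)-(q-1)(k-1)-h,
\]
where the second inequality uses precisely that the coefficients of $X_l$ have $v_A\leq k-1$, so that $w_s(X_l)\geq (s/r)w_r(X_l)-(s/r-1)(k-1)$ for $s\in[r,qr]$. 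This—and not any ``halving over $k$ steps''—is where the hypothesis $2(q-1)(k-1)\leq c$ enters. In the limit $c_l\to\infty$ the shallow part vanishes, giving $U^{-1}F\phi(U)D^{-1}-I_n\in\pi_K^k\r_A^{\int,r}$ while the bound $w_r\geq c+h/(q-1)$ is maintained throughout; invertibility of $U$ over $\r_A^{[r,qr]}$ follows directly because each $w_s(X_l)>0$ on $[r,qr]$ by the same estimate, not via any property of $\phi(U)$.
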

\begin{proof}
For any $i\in\{v(a)|a\in A_K\}$, $r>0$, $f=\sum^{+\infty}_{j=-\infty}a_j
T^j\in\r_{A_K}$, we set $v_i(f)=\min\{j:v(a_j)\leq i\}$ and
$v_{i,r}(f)=rv_i(f)+i$. (In case $A=\Q$, they are $v_i^{\rm{naive}}$, $v_{i,r}^{\rm{naive}}$ introduced in \cite[p.
458]{Ke05}.) It is clear that
\[
v_{i,r}(f)=rv_i(f)+i\geq rv_i(f)+v(a_{v_i(f)})\geq w_r(f).
\]
Furthermore, we claim that $w_r(f)=\min_i\{v_{i,r}(f)\}$. In fact, suppose $w_r(f)=v(a_{j_0})+rj_0$ for some $j_0$. Let $i_0=v(a_{j_0})$. It follows that $v_{i_0}(f)\leq j_0$. This implies that $v_{i_0,r}(f)\leq w_r(f)$, yielding the claim.

We define a sequence of invertible matrices $U_0$, $U_1$, $\dots$ over
$\r_{A_K}^{[r, qr]}$ and a sequence of matrices $F_0$, $F_1$, $\dots$ over
$\r_{A_K}^{[r,r]}$ as follows. Set $U_0=I_n$. Given $U_l$, put
$F_l=U_l^{-1}F\varphi(U_l)$. Suppose
$F_lD^{-1}-I_n=\displaystyle{\sum_{m=-\infty}^{\infty}}V_mT^m$ where
the $V_m$'s are $n\times n$ matrices over $A_K$. Let
$X_l=\displaystyle{\sum_{v(V_m)\leq k}}V_mT^m$, and put
$U_{l+1}=U_l(I_n+X_l)$. Set
\[
c_l=\min_{i\leq k}\{{v_{i,r}(F_lD^{-1}-I_n)}-h/(q-1)\}.
\]
By the construction of $X_l$ we get
\[
w_r(X_l)=\min_{i}v_{i,r}(X_l)=\min_{i\leq k}v_{i,r}(X_l)=\min_{i\leq k}v_{i,r}(F_lD^{-1}-I_n)=c_l+h/(q-1).
\]
We now prove by induction that $c_l\geq\max\{c,\frac{l+1}{2}c\}$,
$w_{r}(F_lD^{-1}-I_n)\geq c+h/(q-1)$ and $U_l$ is invertible over
$\r_{A_K}^{[r,qr]}$ for any $l\geq0$. For $l=0$, by assumption, it is clear that
\[
c_0\geq w_{r}(FD^{-1}-I_n)-h/(q-1)\geq c.
\]
Suppose
that the claim is true for some $l\geq 0$.  Note that for any $s\in[r,qr]$ and $m\in\mathbb{Z}$,
\begin{eqnarray*}
(s/r)(v(V_m)+rm)&=&v(V_m)+sm+(s/r-1)v(V_m)\\
&\leq& v(V_m)+sm+(s/r-1)k.
\end{eqnarray*}
Hence $(s/r)w_r(X_l)\leq w_s(X_l)+(s/r-1)k$. Since $c_l\geq\frac{l+1}{2}c\geq (q-1)k$, we therefore deduce that
\begin{eqnarray*}
w_s(X_l)&\geq& (s/r)w_r(X_l)-(s/r-1)k\\
&=&(s/r)(c_l+h/(q-1))-(s/r-1)k\\
&>&0
\end{eqnarray*}
for any $s\in[r,qr]$. It follows that $U_{l+1}$ is invertible over $\r_{A_K}^{[r,qr]}$.
Furthermore, we have
\begin{eqnarray*}
w_r(D\varphi(X_l)D^{-1})&\geq& w_r(D)+w_r(\varphi(X_l))+w_r(D^{-1})\\
&=&w_{qr}(X_l)-h\\
&\geq&q(c_l+h/(q-1))-(q-1)k-h\\
&=&qc_l+h/(q-1)-(q-1)k\\
&\geq& c_l+\frac{1}{2}c+h/(q-1)+(\frac{1}{2}c-(q-1)k)\\
&\geq& \frac{(l+2)}{2}c+h/(q-1)
\end{eqnarray*}
since $c_l\geq c$. Note that
\begin{eqnarray*}
F_{l+1}D^{-1}-I_n&=&(I_n+X_l)^{-1}F_lD^{-1}(I_n+D\varphi(X_l) D^{-1})-I_n\\
&=&((I_n+X_l)^{-1}F_lD^{-1}-I_n)+(I_n+X_l)^{-1}(F_lD^{-1})D\varphi(X_l)D^{-1}.
\end{eqnarray*}
Since $w_r(F_lD^{-1})=w_r((I_n+X_l)^{-1})=0$, for $i\leq k$, we have
\begin{eqnarray*}
v_{i,r}((I_n+X_l)^{-1}(F_lD^{-1})D\varphi(X_l)D^{-1})&\geq& w_r(D\varphi(X_l)D^{-1})\\
&\geq &\frac{(l+2)}{2}c+h/(q-1).
\end{eqnarray*}
Write
\begin{eqnarray*}
(I_n+X_l)^{-1}F_lD^{-1}-I_n&=&(I_n+X_l)^{-1}(F_lD^{-1}-I_n-X_l)\\
&=&\displaystyle{\sum_{j=0}^{\infty}}(-X_l)^{j}(F_lD^{-1}-I_n-X_l).
\end{eqnarray*}
By definition of $X_l$, we have $v_i(F_lD^{-1}-I_n-X_l)=\infty$
for $i\leq k$ and
\[
w_{r}(F_lD^{-1}-I_n-X_l)\geq w_{r}(F_lD^{-1}-I_n)\geq c+h/(q-1).
\]
Thus $v_{i,r}(F_lD^{-1}-I_n-X_l)=\infty$ for $i\leq k$, and for $j\geq1$ and $i\leq k$, we have
\begin{eqnarray*}
v_{i,r}((-X_l)^{j}(F_lD^{-1}-I_n-X_l))&\geq& w_r((-X_l)^{j}(F_lD^{-1}-I_n-X_l))\\
&\geq& jw_r(X_l)+c+h/(q-1)\\
&=& j(c_l+h/(q-1))+c+h/(q-1)\\
&\geq& c+c_l+2h/(q-1)\\
&>&\frac{l+2}{2}c+h/(q-1).
\end{eqnarray*}
Putting all these together, we get
\[
v_{i,r}(F_{l+1}D^{-1}-I_n)\geq \frac{l+2}{2}c+h/(q-1)
\]
for any $i\leq k$ and
$w_{r}(F_{l+1}D^{-1}-I_n)\geq c+h/(q-1)$; this yields $c_{l+1}\geq \frac{l+2}{2}c$. The induction step is
finished.

Now since $w_s(X_l)\geq (s/r)(c_l+h/(q-1))-(s/r-1)k$ for $s\in[r,qr]$, and
$c_l\rightarrow\infty$ as $l\rightarrow\infty$, the sequence ${U_l}$
converges to a limit $U$, which is an invertible $n\times n$ matrix
over $\r_{A_K}^{[r,qr]}$ satisfying $w_r(U^{-1}F\varphi(U)D^{-1}-I_n)\geq
c+h/(q-1)$. Furthermore, we have
\[
v_{i,r}(U^{-1}F\varphi(U)D^{-1}-I_n)=\displaystyle{\lim_{l\rightarrow\infty}}
v_{i,r}(U_l^{-1}F\varphi(U_l)D^{-1}-I_n)
=\displaystyle{\lim_{l\rightarrow\infty}}
v_{i,r}(F_{l+1}D^{-1}-I_n)=\infty
\]
for any $i\leq k$. Therefore $U^{-1}F\varphi(U)D^{-1}-I_n$ has
entries in $\pi^k\r_{A_K}^{\int, r}$.
\end{proof}

\begin{lemma}\label{lem:model-extension}
For any free $\varphi$-modules $N_1,N_2$ over $\r^{\bd}_{A_K}$, the natural map
\[
\mathrm{Ext}^1_{\varphi,\r^\bd_{A_K}}(N_1,N_2)\ra\mathrm{Ext}^1_{\varphi,\r_{A_K}}
(N_1\otimes_{\r_{A_K}^{\bd}}\r_{A_K},N_2\otimes_{\r_{A_K}^{\bd}}\r_{A_K})
\]
is surjective. Here $\mathrm{Ext}^1_{\varphi,\r^\bd_{A_K}}$ and $\mathrm{Ext}^1_{\varphi,\r_{A_K}}$ denote the set of extensions
in the category of $\varphi$-modules over $\r^\bd_{A_K}$ and $\r_{A_K}$ respectively.
\end{lemma}
\begin{proof}
Let $M$ be any extension of $N_2\otimes_{\r_{A_K}^{\bd}}\r_{A_K}$ by $N_1\otimes_{\r_{A_K}^{\bd}}\r_{A_K}$ in the category of $\varphi$-modules over $\r_{A_K}$. We pick an $\r_{A_K}$-basis $\mathrm{e}=\{e_1, e_2, \dots, e_{n_1+n_2}\}$ of $M$ so that
$\{e_1, e_2, \dots, e_{n_1}\}$ is a basis of $N_1$ and $\{e_{n_1+1},
e_{n_1+2}, \dots, e_{n_1+n_2}\}$ is a lift of a basis of $N_2$.
The matrix of $\varphi$ under $\mathrm{e}$ is then of the form
\[
F=\left( \begin{array}{cc}
F_{11}&F_{12} \\
0 &F_{22} \\
\end{array} \right),\]
where $F_{11}$ is the matrix of $\varphi$ under $\{e_1, e_2, \dots,
e_{n_1}\}$ and $F_{22}$ is the matrix of $\varphi$ under the image of
$\{e_{n_1+1}, e_{n_2+2}, \dots, e_{n_1+n_2}\}$.  It is clear that for
 any $\lambda\in K^{\times}$,  the matrix of $\varphi$ under $\{e_1, e_2, \dots, e_{n_1},\lambda e_{n_1+1}, \dots, \lambda e_{n_1+n_2}\}$ is
\[ \left( \begin{array}{cc}
F_{11}&\varphi(\lambda)F_{12} \\
0 &(\varphi(\lambda)/\lambda)F_{22} \\
\end{array} \right).\]
Suppose that $F$ is invertible over $\r_{A_K}^{r}$ for some $0<r<r_{\varphi}/q$.
Put
\[ D=\left( \begin{array}{cc}
F_{11}& 0\\
0 &(\varphi(\lambda)/\lambda)F_{22}\\
\end{array} \right),
\]
and $h=-w_r(D)-w_r(D^{-1})$ which is independent of $\lambda$. We choose a positive integer $k$ and $\lambda$ such that
\[
w_r(FD^{-1}-I_{n_1+n_2})=w_r(\lambda F_{12}F_{22}^{-1})\geq2k(q-1)+h/(q-1).
\]
Then applying Lemma \ref{lem:modification}, we obtain an $(n_1+n_2)\times(n_1+n_2)$ invertible matrix $U$ over $\r_{A_K}^{[r,qr]}$ such that $U^{-1}F\varphi(U)D^{-1}-I_{n_1+n_2}$ lies in $\pi^k\r_{A_K}^{\int,r}$ and
\[
w_r(U^{-1}F\varphi(U)D^{-1}-I_{n_1+n_2})\geq2k(q-1)+h/(q-1)>0.
\]
This implies that $U^{-1}F\varphi(U)D^{-1}$, which is the matrix of $\varphi$ under $\mathrm{e} U$, is invertible over $\r_{A_K}^{\bd,r}$.  It follows from Lemma \ref{lem:basis-extension} that $\mathrm{e}U$ extends to a basis of $M$. Moreover, following the construction of $U$ given in Lemma \ref{lem:modification}, we see that each $U_l$ is of the form
\[
U_l=\left( \begin{array}{cc}
I_{n_1}& \ast\\
0 &I_{n_2}\\
\end{array} \right).
\]
Hence so is $U$.
Therefore the model $N$ of $M$ generated by $\mathrm{e} U$ is an extension of $N_2$ by $N_1$.
\end{proof}

\begin{proposition}\label{prop:good-model-point}
Every $\varphi$-module
over $\r_K$ admits a good model.
\end{proposition}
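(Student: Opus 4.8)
The plan is to reduce to the case of pure $\phi$-modules by Kedlaya's slope filtration theorem (Theorem~1.25), and then to assemble a good model of an arbitrary $M$ by gluing good models of the pure graded pieces of its slope filtration, using the surjectivity of $\mathrm{Ext}^1$ from Proposition~3.9. The gluing is an induction on the number of distinct slopes.

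I begin with pure modules. If $M$ is \'etale, a good model is at hand: take the \'etale $\r_K^{\int}$-lattice $M'$ provided by the definition and set $N=M'\otimes_{\r_K^{\int}}\r_K^{\bd}$, so that $N\otimes_{\r_K^{\bd}}\r_K=M$ while $N\otimes_{\r_K^{\bd}}\calE_K=M'\otimes_{\r_K^{\int}}\OO_{\calE_K}\otimes\calE_K$ is \'etale over $\calE_K$, hence semistable of slope $0$ (Proposition~2.9); since $M$ itself is semistable of slope $0$, both HN-polygons of $N$ are the zero polygon. Next let $M$ be pure of slope $s=c/d$. By the definition of purity there is a rank-one $\phi^d$-module $N_0$ with $P:=([d]_*M)\otimes N_0$ \'etale; by the \'etale case $P$ has a good model $P^{\bd}$, the rank-one module $N_0$ plainly has a good model $N_0^{\bd}$, so $Q^{\bd}:=P^{\bd}\otimes(N_0^{\bd})^{\vee}$ is a good model of $[d]_*M$. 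Since $K$ is discrete, $\r_K^{\bd}$ is a field, so any $\r_K^{\bd}$-subspace of $M$ is automatically finite free; put $N^{\bd}=\sum_{j=0}^{d-1}\r_K^{\bd}\cdot\phi^j(Q^{\bd})\subseteq M$. A short verification shows $N^{\bd}$ is a $\phi$-module over $\r_K^{\bd}$ with $N^{\bd}\otimes_{\r_K^{\bd}}\r_K=M$, and since $Q^{\bd}\otimes_{\r_K^{\bd}}\calE_K$ already has full $\calE_K$-dimension in $M\otimes\calE_K$ one has $N^{\bd}\otimes_{\r_K^{\bd}}\calE_K=Q^{\bd}\otimes_{\r_K^{\bd}}\calE_K$. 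Viewed as a $\phi^d$-module, the latter is the generic fibre of the good model $Q^{\bd}$ of the pure module $[d]_*M$, hence semistable of slope $ds$; semistability descends along $[d]_*$ (immediate from the behaviour of slopes under pushforward), so $N^{\bd}\otimes\calE_K$ is semistable of slope $s$. Thus both HN-polygons of $N^{\bd}$ are the line of slope $s$, and $N^{\bd}$ is a good model of $M$.

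For a general $M$, Theorem~1.25 gives the slope filtration $0=M_0\subset M_1\subset\cdots\subset M_l=M$ with $M_i/M_{i-1}$ pure of slope $s_i$ and $s_1<\cdots<s_l$; as pure modules are semistable, this is also the HN filtration. I build good models $\widetilde N_i$ of $M_i$ by induction on $i$; the case $i=1$ is the pure case above. Given $\widetilde N_{i-1}$ and a good model $N_i'$ of $M_i/M_{i-1}$ (pure case again), Proposition~3.9 applied to $N_i'$ and $\widetilde N_{i-1}$ over $\r_K^{\bd}$ lets me lift the extension class of $0\to M_{i-1}\to M_i\to M_i/M_{i-1}\to 0$ to an extension $0\to\widetilde N_{i-1}\to\widetilde N_i\to N_i'\to 0$ over $\r_K^{\bd}$ with $\widetilde N_i\otimes_{\r_K^{\bd}}\r_K\cong M_i$, so the special HN-polygon of $\widetilde N_i$ is the HN-polygon of $M_i$. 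Tensoring this extension with $\calE_K$, the HN filtration of $\widetilde N_{i-1}\otimes\calE_K$ (slopes $s_1<\cdots<s_{i-1}$, since $\widetilde N_{i-1}$ is a good model) followed by the quotient $N_i'\otimes\calE_K$ (semistable of slope $s_i$, since $N_i'$ is a good model) is a filtration of $\widetilde N_i\otimes\calE_K$ by semistable pieces of strictly increasing slopes $s_1<\cdots<s_i$; by uniqueness it is the HN filtration, so the generic HN-polygon of $\widetilde N_i$ is again the HN-polygon of $M_i$. Hence $\widetilde N_i$ is a good model of $M_i$, and $\widetilde N_l$ is the good model of $M$ we want.

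The analytically substantial work --- the successive-approximation estimates of Lemmas~3.7 and 3.8 --- is entirely packaged inside Proposition~3.9, so the remaining content is bookkeeping and the only delicate point is conceptual: verifying that the extension produced by Proposition~3.9 is automatically a good model. What makes this succeed is precisely that the graded slopes satisfy $s_1<\cdots<s_l$, which forces the evident filtration on each $\widetilde N_i\otimes\calE_K$ to be its Harder-Narasimhan filtration; were the slopes not separated, passing to $\calE_K$ could a priori produce new subobjects and push the generic polygon strictly above the special one, and no such direct argument would be available.
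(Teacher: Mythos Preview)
Your overall strategy --- reduce to pure modules via Theorem~1.25, then assemble a good model inductively using the $\mathrm{Ext}^1$-surjectivity of Proposition~3.9 --- is exactly the paper's, and your inductive step is correct (the paper's terser phrase ``generic HN-polygons are additive'' encodes the same concatenation of HN filtrations that you spell out).

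The gap is in your treatment of the pure case. After building the good $\phi^d$-model $Q^{\bd}$ of $[d]_*M$, you set $N^{\bd}=\sum_{j=0}^{d-1}\r_K^{\bd}\cdot\phi^j(Q^{\bd})$ and assert that $N^{\bd}\otimes_{\r_K^{\bd}}\r_K=M$. This requires $\rank_{\r_K^{\bd}}N^{\bd}=\rank_{\r_K}M$; but if $Q^{\bd}$ is not already $\phi$-stable, the sum has strictly larger $\r_K^{\bd}$-rank (if some $\phi(e_i)\notin Q^{\bd}$ then $e_1,\dots,e_n,\phi(e_i)$ are $\r_K^{\bd}$-independent), and the natural map $N^{\bd}\otimes_{\r_K^{\bd}}\r_K\to M$ then acquires a kernel, so $N^{\bd}$ is not a model at all. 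Your justification, that ``$Q^{\bd}\otimes\calE_K$ already has full $\calE_K$-dimension in $M\otimes\calE_K$'', is ill-formed: there is no ring map $\r_K\to\calE_K$, so $M\otimes\calE_K$ has no sensible meaning, while $M\otimes_{\r_K^{\bd}}\calE_K$ is infinite-dimensional and constrains nothing.

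The paper bypasses this by simply citing the known fact from Kedlaya's slope theory that a pure $\phi$-module over $\r_K$ has a \emph{unique} good model. That uniqueness, applied to $\phi^d$-modules, is precisely what makes your construction work: the $\r_K^{\bd}$-span of $\phi(Q^{\bd})$ is again a good $\phi^d$-model of $[d]_*M$ (its $\phi^d$-matrix is $\phi(\Lambda)$ if $Q^{\bd}$ has matrix $\Lambda$, with the same valuations), hence by uniqueness equals $Q^{\bd}$; thus $Q^{\bd}$ is $\phi$-stable, $N^{\bd}=Q^{\bd}$, and everything goes through. You should either invoke this uniqueness directly, as the paper does, or supply an independent argument that $Q^{\bd}$ is $\phi$-stable.
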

\begin{proof}
Each pure $\varphi$-module over $\r_K$ has a unique good model. The general case then follows from Theorem \ref{thm:slope-filtration}, Lemma \ref{lem:model-extension} and Corollary \ref{cor:additive}.
\end{proof}

\begin{proposition}\label{prop:global-model}
Let $M_A$ be a family of $\varphi$-modules over $\r_{A_K}$.
Then for any $x\in M(A)$ and a model $N_x$ of $M_x$, there exists a
Weierstrass subdomain $M(B)$ containing $x$ such that
$M_{B}=M_A\otimes_{\r_{A_K}}\r_{B_K}$ admits a finite free
model $N_B$ which lifts $N_x$. Furthermore, if $k(x)\subset A$, we can choose $M(B)$ so that $N_y$ has constant generic HN-polygons for any $y\in M(B)$.
\end{proposition}
\begin{proof}
Let $\mathrm{e}_x$ be a basis of $N_x$. By Lemma \ref{lem:locally-free}, after shrinking $M(A)$, we may lift $\mathrm{e}_x$ to a basis $\mathrm{e}_A$ of $M^{[r/q,r]}_A$ for some $0<r<r_{\varphi}$. Let $F$ be the matrix of $\varphi$ under $\mathrm{e}_A$. Then $F$ is invertible over $\r_{A_K}^{[r/q,r/q]}$. Since $F(x)$ is the matrix of $\varphi$ under $\mathrm{e}_x$, we get that it is invertible over
$k(x)\otimes_{\Q}\r_K^{\bd,r/q}$. By Corollary \ref{cor:lifting}, we may lift $F(x)^{-1}$ to an invertible matrix $F'$ over $\r^{\bd,r/q}_{A_K}$ by shrinking $M(A)$. Put
\[
h=-w_{r/q}(F')-w_{r/q}((F')^{-1}).
\]
Since $FF'-I$ vanishes at $x$, by Proposition \ref{prop:shrinking}, we choose a positive integer $k$ and
a Weierstrass subdomain $M(B)$ containing $x$ such that
\[
w_{r/q}(FF'-I)\geq2k(q-1)+h/(q-1)
\]
in $\r_{B_K}^{[r/q,r/q]}$. Put
$h'=-w_{r/q}(F')-w_{r/q}((F')^{-1})$
in $\r_{B_K}^{[r/q,r/q]}$; then $h'\leq h$.
Let $c=w_{r/q}(FF'-I)-h'/(q-1)$; then $c\geq 2k(q-1)$.
We therefore deduce from Lemma \ref{lem:modification} that there
exists an invertible matrix $U$ over $\r_{B_K}^{[r/q,r]}$ such that
$U^{-1}F\varphi(U)F'-I_n$ has entries in $\pi^k\r_{B_K}^{\int,
r/q}$ and satisfies
$w_{r/q}(U^{-1}F\varphi(U)F'-I_n)>0$.
This implies that $U^{-1}F\varphi(U)F'$ is invertible over $\r^{\int,r/q}_{B_K}$, yielding that $U^{-1}F\varphi(U)$,
which is the matrix of $\varphi$ under the basis $\mathrm{e}_AU$,
is invertible over  $\r^{\bd,r/q}_{B_K}$. It therefore follows from Lemma \ref{lem:basis-extension} that $\mathrm{e}_AU$ extends to a basis of $M_B$. Furthermore, following the construction of $U$ given in Lemma \ref{lem:modification}, we have $X_l(x)=0,U_l(x)=I$ for each $l$. Hence $U(x)=I$.  Therefore the $\r_{B_K}^\bd$-submodule $N_B$ generated by this basis is a finite free model of $M_B$ lifting $N_x$.

Finally, suppose $k(x)\subset A$. By Corollary \ref{cor:numerical-generic}, if we further shrink $M(B)$ so that $|U^{-1}F\varphi(U)-F(x)|$ is sufficiently small in $\calE_{B_K}$, then the generic HN-polygon of $N_y$ is the same as the generic HN-polygon of $N_x$ for any $y\in M(B)$.
\end{proof}

\begin{corollary}\label{cor:base-lifting}
Let $M_A$ be a family of $\varphi$-modules over $\r_{A_K}$, and
let $x\in M(A)$. Then there exists a
Weierstrass subdomain $M(B)$ containing $x$ such that the vector bundle
$M_{B}=M_A\otimes_{\r_{A_K}}\r_{B_K}$ is freely generated over $\r_{B_K}$.
\end{corollary}
\begin{proof}
This follows immediately from Proposition \ref{prop:good-model-point} and the first part of Proposition  \ref{prop:global-model}.
\end{proof}

\begin{proposition}\label{prop:model-unique-family}
Let $M_A$ be a global pure family of $\varphi$-modules over $\r_{A_K}$. If $N'_A$ and $N''_A$ are two finite free pure models of $M_A$, then they generate the same finite free good model of $M_A$.
\end{proposition}
\begin{proof}
Let $L_1,\dots, L_n$ be the residue fields of the generic points of $A_K$; then the natural map $A_K\ra L=L_1\times\cdots\times L_n$ is a closed embedding (see \cite{B90} for more details about generic points of affinoid algebras and the embedding). Let $U$ be the transformation matrix between some bases of $N_A'$ and $N_A''$. Note that the base changes $N_A'\otimes_{\r_{A_K}^\int}\r^\int_{L}$ and $N_A''\otimes_{\r_{A_K}^\int}\r^\int_{L}$ are pure models of $M_A\otimes_{\r_{A_K}}\r_L$. We therefore deduce from Proposition \ref{prop:model-unique} that $U$ is invertible over $\r_L^\bd$. Hence $U$ is invertible over $\r_L^\bd\cap\r_{A_K}=\r_{A_K}^\bd$, yielding the proposition.
\end{proof}

\begin{theorem}\label{thm:pure-family}
Let $M_A$ be a family of $\varphi$-modules over $\r_{A_K}$.
Suppose that $M_x$ is pure of slope $s$ for some $x\in M(A)$ with $k(x)\subset A$, then there
exists a Weierstrass subdomain $M(B)$ containing $x$ such that $M_B=M_A\otimes_{\r_{A_K}}\r_{B_K}$ admits a finite free $(c,d)$-pure model $N_B$ where $d>0,(c,d)=1$ and $c/d=s$. In particular, $M_B$ is globally pure of slope $s$.
\end{theorem}
\begin{proof}
We first prove the proposition for $M$ as a $\varphi^d$-module. By tensoring with $\r_{A_K}(-c)$, we may assume that $s=0$. Let $N_x$ be an \'etale model of $M_x$, and let $\mathrm{e}_x$ be a basis of $N_x$.  By Proposition \ref{prop:global-model}, for some Weierstrass subdomain $M(B)$ containing $x$, we can lift $\mathrm{e}_x'$ to a basis $\mathrm{e}_B$ of $M_B$ which generates a finite free good model of $M_B$.  Let $F$ be the matrix of $\varphi^d$ under $\mathrm{e}_B$. Note that $F(x)$, which is the matrix of $\varphi$ under $\mathrm{e}_x$, is invertible over $k(x)\otimes_{\Q}\r^{\int,r}_{K}$ for some $r>0$. We may suppose that $F$ is invertible over $\r_{B_K}^{\bd,r}$ by shrinking $r$. By Proposition \ref{prop:shrinking}, we may further shrink $M(B)$ so that
$\min\{w(F-F(x)),w(F^{-1}-(F(x))^{-1})\}>0$
over $\r^{\bd,r}_{B_K}$. This implies $F,F^{-1}\in\r^{\int,r}_{B_K}$. Hence the $\r^{\int}_{B_K}$-submodule $N_B$ of $M_B$ generated by $\mathrm{e}_B$ is a finite free \'etale model of $M_B$.

Note that if $N_B$ is a finite free pure $(c,d)$-model of $M_B$ as a $\varphi^d$-module, so is $\varphi^*N_B$. Thus $N_B$ and $\varphi^*N_B$ generate the same finite free good model of $M_B$ by Proposition \ref{prop:model-unique-family}, yielding that this model is stable under $\varphi$. This implies that $N_B$ is
a finite free pure $(c,d)$-model of $M_B$ as a $\varphi$-module.
\end{proof}

\subsection{Global slope filtration}
We set $L$ and $\widetilde{\r}$ (resp. $\widetilde{\r}^\int$, $\widetilde{\r}^\bd$, $\widetilde{\calE}$) separately in the following two cases.
\begin{enumerate}
\item[(AF)](absolute Frobenius case) If $K$ is an unramified extension of $\Q$ in $\overline{\mathbb{Q}}_p$ and $\varphi$ is a $q$-power absolute Frobenius lift, let $L=\widehat{\mathbb{Q}_p^{\mathrm{ur}}}$, and let $\widetilde{\r}$, $\widetilde{\r}^\int$, $\widetilde{\r}^\bd$, $\widetilde{\calE}$ denote $\bt_{\rig}^\dagger$, $\cup_{r>0}\at^{(0,r]}$, $\bt^\dagger$, $\bt$ respectively. (See \cite{Colmez07} for more details about the constructions of $\bt_{\rig}^\dagger$, $\at^{(0,r]}$, $\bt^\dagger$ and $\bt$). The latter can be  identified with $\Gamma_{\mathrm{an,con}}^\mathrm{alg}$, $\Gamma^{\mathrm{alg}}_{\mathrm{con}}$, $\Gamma_{\mathrm{con}}^\mathrm{alg}[\pi^{-1}]$ and $\Gamma^\mathrm{alg}[\pi^{-1}]$ respectively.
    (See \cite[\S1.1]{B08} for more explanations about these identifications.)
    Here the latter are different type of basic rings associated to the residue field  $\widehat{\mathbb{F}_p((u))^{\mathrm{alg}}}$ (where the completion is taken for the $u$-adic topology) introduced by Kedlaya in \cite[\S2]{Ke05}. On the other hand, $\widetilde{\r}_L$, $\widetilde{\r}^\int_L$, $\widetilde{\r}_L^\bd$, $\widetilde{\calE}_L$ are basic rings associated to $\overline{\mathbb{F}}_p((u^{\mathbb{Q}}))$. By \cite[Theorem 8]{Ke01}, $\widehat{\mathbb{F}_p((u))^{\mathrm{alg}}}$ is a proper closed subfield of $\overline{\mathbb{F}}_p((u^{\mathbb{Q}}))$. This leads to natural embeddings $\bt_{\rig}^\dagger\subset\widetilde{\r}_L$,  $\cup_{r>0}\at^{(0,r]}\subset\widetilde{\r}^{\int}_L$, $\bt^\dagger\subset\widetilde{\r}_L^\bd$, $\bt\subset\widetilde{\calE}_L$, which respect Frobenius actions, following \cite[\S2]{Ke05}.
\item[(RF)](relative Frobenius case) For general $K$, let $L$ be some admissible extension of $K$ with strongly difference-closed residue field $k_L$, and let $\widetilde{\r}$, $\widetilde{\r}^\int$, $\widetilde{\r}^\bd$, $\widetilde{\calE}$ denote $\widetilde{\r}_L$, $\widetilde{\r}^\int_L$, $\widetilde{\r}_L^\bd$, $\widetilde{\calE}_L$ respectively.
\end{enumerate}

\begin{remark}
In both cases, $L$ are admissible extensions of $K$ with strongly difference-closed residue fields. In the $\mathrm{AF}$ case,  $\r_K$, $\r_K^\bd$, $\calE_K$ are the basic rings associated to $k((T))$ (\cite[\S2.3]{Ke05}), and the $\varphi$-equivariant embeddings $\r_K\ra\widetilde{\r}_L$, $\r_K^\bd\ra\widetilde{\r}_L^\bd$,  $\calE_K\ra\widetilde{\calE}_L$  given in
Remark \ref{rem:embedding} are induced by the natural embedding $k((T))\ra \overline{\mathbb{F}}_p((u^{\mathbb{Q}}))$ defined as $\sum_{i>-\infty}a_iT^i\mapsto \sum_{i>-\infty}a_iu^i$; this embedding factors through $\widehat{\mathbb{F}_p((u))^{\mathrm{alg}}}$. Thus the embeddings  $\r_K\ra\widetilde{\r}_L$, $\r_K^\bd\ra\widetilde{\r}_L^\bd$,  $\calE_K\ra\widetilde{\calE}_L$ factor through $\widetilde{\r}$, $\widetilde{\r}^{\bd}$, $\widetilde{\calE}$ respectively.
\end{remark}

\begin{remark}
In the $\mathrm{AF}$ case, the slope theory for $\varphi$-modules over $\widetilde{\r}$, $\widetilde{\r}^{\bd}$, $\widetilde{\calE}$  bears the same properties as the slope theory for $\varphi$-modules over $\widetilde{\r}_L$, $\widetilde{\r}_L^\bd$, $\widetilde{\calE}_L$. In fact, all the results of $\S1.5,\S1.6, \S1.7$ for $\varphi$-modules over the extended base rings are motivated by their counterparts for $\varphi$-modules over $\widetilde{\r}$, $\widetilde{\r}^{\bd}$, $\widetilde{\calE}$ developed in \cite{Ke05}.
\end{remark}

\begin{definition}
Let $M_A$ (resp. $N_A$) be a family of $\varphi$-modules over $\r_{A_K}$ (resp. $\r^{\bd}_{A_K}$). For any $x\in M(A)$,  we set
\[
\widetilde{M}_x=M_A\otimes_{\r_{A_K}}(k(x)\otimes_{\Q}\widetilde{\r})\quad \text{(resp. $\widetilde{N}_x=N_A\otimes_{\r^{\bd}_{A_K}}(k(x)\otimes_{\Q}\widetilde{\r}^{\bd})$)}
\]
which is a $\varphi$-module over $k(x)\otimes_{\Q}\widetilde{\r}^\bd$ (resp. $k(x)\otimes_{\Q}\widetilde{\r}$).
\end{definition}

\begin{proposition}\label{prop:HN-polygon-multifield-extended}
Let $M_A$ (resp. $N_A$) be a family of $\varphi$-modules over $\r_{A_K}$ (resp. $\r_{A_K}^\bd$), and let $x\in M(A)$. Suppose that
\[
k(x)\otimes_{\Q}L=\oplus_{i=1}^nL_i
\]
where each $L_i$ is a finite field extension of $L$. Then the following are true.
\begin{enumerate}
\item[(1)]The residue field $k_{L_i}$ of $L_i$ is strongly difference-closed for each $1\leq i\leq n$.
\item[(2)]Let $\widetilde{M}_{x,i}=M_x\otimes_{k(x)\otimes_{\Q}\r_{K}}(L_i\otimes_{L}\widetilde{\r})$ (resp. $\widetilde{N}_{x,i}=N_x\otimes_{k(x)\otimes_{\Q}\r^{\bd}_{K}}(L_i\otimes_{L}\widetilde{\r}^{\bd})$) for $1\leq i\leq n$. Then the HN-polygon of each $\widetilde{M}_{x,i}$ (resp. generic HN-polygon of each $\widetilde{N}_{x,i}$) is the same as $M_x$'s (resp. $N_x$'s).
\end{enumerate}
\end{proposition}
\begin{proof}
By Proposition \ref{prop:HN-polygon-multifield}(1), the $\varphi$-action on each $L_i$ is an automorphism. Hence the residue field $k_{L_i}$ is inversive. For the rest of $(1)$, it reduces to show that any dualizable difference module $P$ over $k_{L_i}$ is trivial. Since $k_{L_i}$ is inversive, the $\varphi$-action on $P$ is therefore bijective. Hence $P$ is also dualizable as a difference module over $k_L$. This implies that $P$ a $\varphi$-invariant basis over $k_L$. Hence it admits a $\varphi$-invariant basis over $k_{L_i}$. For (2),
It is clear that each $\widetilde{M}_{x,i}$ (resp. $\widetilde{N}_{x,i}$) is a base change of some component of $M_x$ (resp. $N_x$). Hence the HN-polygon of $\widetilde{M}_{x,i}$ (resp. generic HN-polygon of $\widetilde{N}_{x,i}$) is the same as $M_x$'s (resp. $N_x$'s) by Proposition \ref{prop:robba-extension} (for the $\mathrm{RF}$ case) and \cite[Theorem 6.4.1]{Ke05} (for the $\mathrm{AF}$ case) (resp. Proposition \ref{prop:generic-slope-filtration} (for the $\mathrm{RF}$ case) and \cite[Proposition 5.3.1]{Ke05} (for the $\mathrm{AF}$ case)), yielding the proposition.
\end{proof}

In the situation of Proposition \ref{prop:HN-polygon-multifield-extended},  it is clear that $\widetilde{M}_x$ (resp. $\widetilde{N}_x$) is isomorphic to the direct sum of all $\widetilde{M}_{x,i}$ (resp. $\widetilde{N}_{x,i}$). We call each $\widetilde{M}_{x,i}$ (resp. $\widetilde{N}_{x,i}$) a \emph{component} of $\widetilde{M}_x$. We set the \emph{slopes} and \emph{HN-polygon} of $\widetilde{M}_x$ (resp. \emph{generic slopes} and \emph{generic HN-polygon} of $\widetilde{N}_x$) as the slopes and HN-polygon of $\widetilde{M}_{x,i}$ (resp. generic slopes and generic HN-polygon of $\widetilde{N}_{x,i}$). We set the \emph{HN filtration} of $\widetilde{M}_x$ (resp. \emph{generic HN filtration} of $\widetilde{N}_x$) as the direct sum of the HN filtrations of all $\widetilde{M}_{x,i}$ (generic HN filtrations of all $\widetilde{N}_{x,i}$).

\begin{lemma}\label{lem:Frobenius-equation-family}
Consider the Frobenius equation
\begin{equation}\label{eq:Frobenius-equation-family}
\varphi(\beta)-\pi^n\beta=\alpha.
\end{equation}\begin{enumerate}
\item[(1)]
Let $\alpha\in A\widehat{\otimes}_{\Q}\widetilde{\calE}$. If $n\neq0$, then (\ref{eq:Frobenius-equation-family})
admits a unique solution $\beta\in A\widehat{\otimes}_{\Q}\widetilde{\calE}$ which is
\begin{equation}\label{eq:generalize-solution-n-nega}
\beta=-\sum_{m=0}^\infty(\pi^{-n})^{\{m+1\}}\varphi^m(\alpha)
\end{equation}
if $n<0$, or
\begin{equation}\label{eq:generalize-solution-n-posi}
\beta=\sum_{m=0}^\infty(\pi^{-n})^{\{-m\}}\varphi^{-m-1}(\alpha)
\end{equation}
if $n>0$. Furthermore, if $n>0$, then $w(\beta)=w(\alpha)$, and if $n<0$, then $w(\beta)=w(\alpha)-n$. If $n=0$, then (\ref{eq:Frobenius-equation-family}) admits a solution $\beta\in A\widehat{\otimes}_{\Q}\widetilde{\calE}$
with $w(\beta)=w(\alpha)$.
\item[(2)]
Let $\alpha\in A\widehat{\otimes}_{\Q}\widetilde{\r}^{\bd,r}$.  If $n>0$, then (\ref{eq:generalize-solution-n-posi}) provides the unique solution $\beta\in A\widehat{\otimes}_{\Q}\widetilde{\r}^{\bd}$ of (\ref{eq:Frobenius-equation-family}). Furthermore, we have $\beta\in A\widehat{\otimes}_{\Q}\widetilde{\r}^{\bd,qr}$, $w(\beta)=w(\alpha)$ and $w_r(\beta)\geq\min\{w(\alpha),w_r(\alpha)\}$.
If $n=0$, then (\ref{eq:Frobenius-equation-family}) admits a solution $\beta\in A\widehat{\otimes}_{\Q}\widetilde{\r}^{\bd,qr}$ with $w(\beta)=w(\alpha)$ and $w_r(\beta)\geq w_r(\alpha)$.
\item[(3)]
Let $\alpha\in A\widehat{\otimes}_{\Q}\widetilde{\r}^{\bd,r}$, and write $\alpha=\sum_{i\in\mathbb{Q}}a_iu^i$ as an element of $\widetilde{\r}^{\bd,r}_{A_L}$. If $n<0$, then (\ref{eq:Frobenius-equation-family}) admits at most one solution $\beta\in A\widehat{\otimes}_{\Q}\widetilde{\r}$, and it has a solution if and only if
\begin{equation}\label{eq:obstruction}
\sum_{m\in\mathbb{Z}}(\pi^{-n})^{\{m+1\}}\varphi^m(a_{iq^{-m}})=0
\end{equation}
for every $i<0$. Furthermore, if $\beta$ is a solution of (\ref{eq:Frobenius-equation-family}), then it belongs to $ A\widehat{\otimes}_{\Q}\widetilde{\r}^{\bd,qr}$ and satisfies $w(\beta)=w(\alpha)-n, w_r(\beta)\geq w_r(\alpha)-C(q,r,n)$ where $C(q,r,n)$ is some constant which only depends on $q,r,n$. As a consequence of $(1)$, we see that $\beta$ is given by (\ref{eq:generalize-solution-n-nega}).
\end{enumerate}
\end{lemma}
\begin{proof}
The uniqueness part of $(1)$ and $(2)$ follow from the fact that $w$ is preserved by $\varphi$.
By Lemma \ref{lem:orthonomal-basis}, $A$ admits an orthogonal basis over $\Q$. Since $\varphi$ acts trivially on $A$, using an orthogonal basis, the rest of $(1)$ and $(2)$ reduce to the case $A=\Q$. For $(1)$, if $n\neq0$, it is clear that the series (\ref{eq:generalize-solution-n-nega}) and (\ref{eq:generalize-solution-n-posi}) converge in $\widetilde{\calE}$, and give a solution of (\ref{eq:Frobenius-equation-family}). For $n=0$, we apply Lemma \ref{lem:difference-closed}. For (2), it follows from Lemma \ref{lem:Frobenius-equation}(2) (for the $\mathrm{RF}$ case), \cite[Proposition 3.3.7(c)]{Ke05} (for the $\mathrm{AF}$ case  and $n>0$) and \cite[Lemma 5.1]{KL10}
(for the $\mathrm{AF}$ case and $n=0$).

For $(3)$, suppose that $\beta=\sum_{i\in\mathbb{Q}}b_iu^i\in A\widehat{\otimes}_{\Q}\widetilde{\r}$ is a solution of (\ref{eq:Frobenius-equation-family}). Comparing the coefficients of both sides of (\ref{eq:Frobenius-equation-family}), we get
$\varphi(b_{i/q})-\pi^nb_i=a_i$
for every $i\in\mathbb{Q}$; hence $b_i=\pi^{-n}\varphi(b_{i/q})-\pi^{-n}a_i$. Since $n<0$ and $\{a_i\}_{i\in\mathbb{Q}}$ is bounded, we get
\begin{equation}\label{eq:beta}
b_i=-\sum_{m=0}^\infty (\pi^{-n})^{\{m+1\}}\varphi^m(a_{iq^{-m}})
\end{equation}
by iteration. Thus $\beta$ is uniquely determined by $\alpha$ and belongs to $\widetilde{\r}_{A_L}^\bd$. Furthermore, it follows that there exist $C\in\mathbb{R}$ and $s>0$ such that
\[
v(\sum_{m=0}^\infty (\pi^{-n})^{\{m+1\}}\varphi^m(a_{iq^{-m}}))\geq C-si
\]
for any $i\in\mathbb{Q}$. Since $((\pi^{-n})^{\{m+1\}}\varphi^m(a_{iq^{-m}}))^{\{k\}}=(\pi^{-n})^{\{m+k+1\}}\varphi^{m+k}(a_{q^{-m}}))$ for any $k\in \mathbb{Z}$, it follows that
\begin{eqnarray*}
(\sum_{m=-k}^\infty(\pi^{-n})^{\{m+1\}}\varphi^m(a_{(iq^{-m}}))^{\{k\}}
&=&\sum_{m=0}^\infty(\pi^{-n})^{\{m+1\}}\varphi^m(a_{iq^{k-m}})\\
&=&\sum_{m=0}^\infty(\pi^{-n})^{\{m+1\}}\varphi^m(a_{(iq^k)q^{-m}}).
\end{eqnarray*}
Hence
\[
v(\sum_{m=-k}^\infty(\pi^{-n})^{\{m+1\}}\varphi^m(a_{iq^{-m}}))
=v(\sum_{m=0}^\infty(\pi^{-n})^{\{m+1\}}\varphi^m(a_{(iq^k)q^{-m}}))+nk\geq C-siq^k+nk.
\]
Therefore, if $i<0$, then $v(\sum_{m=-k}^\infty(\pi^{-n})^{\{m+1\}}\varphi^m(a_{iq^{-m}}))\rightarrow +\infty$ as $k\rightarrow +\infty$, yielding $\sum_{m\in\mathbb{Z}}(\pi^{-n})^{\{m+1\}}\varphi^m(a_{iq^{-m}})=0$. This proves the ``only if'' part of (3).

To prove the ``if'' part, for any $f=\sum_{i\in\mathbb{Q}}a_iu^i\in\widetilde{\r}^r_{A_L}$ and $c\in\mathbb{R}$, we set
\[
w_r^{c,-}(f)=\min_{i\leq c}\{v(a_i)+ri\}.
\]
It is clear that $w_r^{c,-}(f)\ra\infty$ as $c\ra-\infty$. Now suppose $\sum_{m\in\mathbb{Z}}(\pi^{-n})^{\{m+1\}}\varphi^m(a_{iq^{-m}})=0$ for every $i<0$. If $i\leq-1$, then for each $m\leq-1$,
\begin{equation*}
\begin{split}
v((\pi^{-n})^{\{m+1\}}\varphi^m(a_{iq^{-m}}))&=(v(a_{iq^{-m}})+riq^{-m})-riq^{-m}-n(m+1)\\
&\geq w^{i,-}_r(\alpha)-riq^{-m}-n(m+1)\geq (w^{i,-}_r(\alpha)-ri)-C_1(q,r,n)
\end{split}
\end{equation*}
for some constant $C_1(q,r,n)$. Hence
\begin{equation}\label{eq:ileq-1}
\begin{split}
w_r((\sum_{m=0}^\infty (\pi^{-n})^{\{m+1\}}\varphi^m(a_{iq^{-m}}))u^i)&=v(-\sum_{m=-1}^{-\infty}(\pi^{-n})^{\{m+1\}}\varphi^m(a_{iq^{-m}}))+ri\\
&\geq  w^{i,-}_r(\alpha)-C_1(q,r,n)
\end{split}
\end{equation}
for $i\leq-1$. If $i>-1$, then for any $m\geq0$,
\[
v((\pi^{-n})^{\{m+1\}}\varphi^m(a_{iq^{-m}}))\geq w_r(\alpha)-riq^{-m}-n(m+1)\geq (w_r(\alpha)-ri)-C_2(q,r,n)
\]
for some constant $C_2(q,r,n)$, yielding
\begin{equation}\label{eq:i>-1}
w_r((\sum_{m=0}^\infty (\pi^{-n})^{\{m+1\}}\varphi^m(a_{iq^{-m}}))u^i)\geq w_r(\alpha)-C_2(q,r,n).
\end{equation}
Now suppose that $\beta$ is given by (\ref{eq:solution-n-nega}). Since the series is convergent in $A\widehat{\otimes}_{\Q}\widetilde{\calE}$ (hence in $\widetilde{\calE}_{A_L}$), a short computation shows that the $i$-th coefficients of $\beta$ is just $b_i$ given by (\ref{eq:beta}). Hence
\begin{equation}
\beta=-\sum_{i\in\mathbb{Q}}(\sum_{m=0}^\infty (\pi^{-n})^{\{m+1\}}\varphi^m(a_{iq^{-m}}))u^i.
\end{equation}
We claim that $\beta\in\widetilde{\r}_{A_L}^{\bd,r}$. Since $\beta\in A\widehat{\otimes}_{\Q}\widetilde{\calE}$, it satisfies
(1) of Definition \ref{def:extended-Robba}. By (\ref{eq:ileq-1}) and (\ref{eq:i>-1}), we see that $\beta$ satisfies (2) of Definition \ref{def:extended-Robba}; hence $\beta\in\widetilde{\r}_{A_L}^{\bd,r}$, and it satisfies $w_r(\beta)\geq w_r(\alpha)-C(q,r,n)$ for
\[
C(q,r,n)=\max\{C_1(q,r,n),C_2(q,r,n)\}.
\]
Furthermore, $\varphi(\beta)=\alpha+\pi^n\beta\in\widetilde{\r}_{A_L}^{\bd,r}$ implies $\beta\in\widetilde{\r}_{A_L}^{\bd,qr}$. Since $\widetilde{\calE}$, which is complete with respect to $w$,
is a closed subspace of $\widetilde{\calE}_L$, and $\widetilde{\calE}\cap\widetilde{\r}_{L}^{\bd,qr}=\widetilde{\r}^{\bd,qr}$, we deduce that
\[
\beta\in\widetilde{\r}_{A_L}^{\bd,qr}\cap  (A\widehat{\otimes}_{\Q}\widetilde{\calE})=A\widehat{\otimes}_{\Q}\widetilde{\r}^{\bd,qr}
\]
by Lemma \ref{lem:intersection}.
\end{proof}
In the situation of Lemma \ref{lem:Frobenius-equation-family}(3), we call the ideal of $A_L$ generated by the left hand sides of (\ref{eq:obstruction}) for all $i<0$ the \emph{obstruction} of the equation (\ref{eq:Frobenius-equation-family}).

\begin{lemma}\label{lem:Zariski-closed}
Let $L'$ be a $p$-adic field, and let $a\in A_{L'}$. Then the set
\[
\{x\in M(A)|a(x)=0\}
\]
is a Zariski closed subset of $M(A)$.
\end{lemma}
\begin{proof}
Let $\{e_i\}_{i\in I}$ be an orthogonal basis of $L'$ over $\Q$, and write $a=\sum_{i\in I}a_ie_i$ with $a_i\in A$. It is then clear that
 $a(x)=0$ if and only if $a_i(x)=0$ for all $i\in I$. This yields the Lemma.
\end{proof}

\begin{lemma}\label{lem:Zariski-closed-obstruction}
Keep notations as in Lemma \ref{lem:Frobenius-equation-family}(3). Then the set $S$ of $x\in M(A)$ at which the specialization of (\ref{eq:Frobenius-equation-family}) admits a solution in $k(x)\otimes_{\Q}\widetilde{\r}^{\bd}$ forms a Zariski closed subset $M(B)$ of $M(A)$. Furthermore, (\ref{eq:Frobenius-equation-family}) admits a unique solution in $B\widehat{\otimes}_{\Q}\widetilde{\r}^\bd$.
\end{lemma}
\begin{proof}
By Lemma \ref{lem:Frobenius-equation-family}(3), we see that $S$ is just the set of points at which the image of the obstruction in $k(x)\otimes_{\Q}L$ is the zero ideal. Hence $S$ is a Zariski closed subset $M(B)$ of $M(A)$ by Lemma \ref{lem:Zariski-closed}. Furthermore, since the obstruction vanishes in $B_L$, we get the rest of the Lemma by Lemma \ref{lem:Frobenius-equation-family}(3) again.
\end{proof}

The following lemma is based on \cite[Proposition 5.4.5]{Ke05}.

\begin{lemma}\label{lemma:good-basis-family}
Let $D$ be an $n\times n$ diagonal matrix with entries $D_{ii}=\pi^{a_i}$ satisfying $a_1\geq \cdots\geq a_n$, and let $F\in\mathrm{GL}_n(A\widehat{\otimes}_{\Q}\widetilde{\r}^{\bd,r})$ for some $r>0$. If $w(FD^{-1}-I_n)>0$ and $w_r(FD^{-1}-I_n)>0$, then there exists $U\in A\widehat{\otimes}_{\Q}\widetilde{\r}^{\bd,qr}$ with $w(U-I_n)>0$ and $w_r(U-I_n)>0$, such that $U^{-1}F\varphi(U)D^{-1}-I_n$ is upper triangular nilpotent.
\end{lemma}
\begin{proof}
 Put $c_0=\min\{w(FD^{-1}-I_n), w_r(FD^{-1}-I_n)\}$ and $U_0=I_n$. We will inductively construct a sequence $ U_1, U_2\cdots\in\mathrm{GL}_n( A\widehat{\otimes}_{\Q}\widetilde{\r}^{\bd,qr})$ satisfying
\[
\min\{w(U_l^{-1}F\varphi(U_l)D^{-1}-I_n),w_r(U_l^{-1}F\varphi(U_l)D^{-1}-I_n)\}\geq c_0,
\]
\[
w(U_l-I_n)\geq c_0, \min\{ w(U_{l+1}-U_l), w_{qr}(U_{l+1}-U_l)\}\geq (l+1)c_0,
\]
and the lower triangular part of $U_l^{-1}F\varphi(U_l)D^{-1}-I_n$ has both $w$ and $w_r$ valuations $\geq(l+1)c_0$ for $l\geq1$. Given $U_l$, put $F_l=U_l^{-1}F\varphi(U_l)$, and write $F_lD^{-1}-I_n=B_l+C_l$ where $B_l$ is upper triangular nilpotent and $C_l$ is lower triangular. Then $\min\{w(C_l),w_r(C_l)\}\geq(l+1)c_0$. We claim that there exists an $n\times n$ lower triangular matrix $X_l$ over $ A\widehat{\otimes}_{\Q}\widetilde{\r}^{\bd,qr}$ satisfying $C_l+X_l=D\varphi(X_l)D^{-1}$ and
\[
\min\{w(X_l),w(D\varphi(X_l)D^{-1}), w_{qr}(X_l), w_r(X_l), w_r(D\varphi(X_l)D^{-1})\}\geq(l+1)c_0.
\]
In fact, since $a_i\geq a_j$ as $i\leq j$, this amounts to solving a system of equations of the forms
\begin{equation}\label{eq:Frobenius-family}
c+x=\pi^{m}\varphi(x),\qquad c\in A\widehat{\otimes}_{\Q}\widetilde{\r}^{\bd,r}, m\leq0.
\end{equation}
By Lemma \ref{lem:Frobenius-equation-family}(2), (\ref{eq:Frobenius-family}) has a solution $x\in A\widehat{\otimes}_{\Q}\widetilde{\r}^{\bd,qr}$ with $w(x)\geq w(c)$ and $w_{r}(x)\geq\min\{w(c),w_r(c)\}$. Hence $w(\pi^m\varphi(x))\geq \min\{w(c),w(x)\}\geq w(c)$ and
\[
w_{qr}(x)=w_r(\varphi(x))\geq w_r(\pi^m\varphi(x))\geq \min\{w_r(c),w_r(x)\}\geq\min\{w(c),w_r(c)\}.
\]
This yields the claim. Put $U_{l+1}=U_l(I_n-X_l)$; then
\begin{equation*}
\begin{split}
w(U_{l+1}-I_n)\geq &c_0, \quad w_{qr}(U_{l+1}-U_l)=w_{qr}(X_l)\geq (l+1)c_0,\\
w(U_{l+1}-U_l)&=w(X_l)\geq (l+1)c_0.
\end{split}
\end{equation*}
We have
\[
U_{l+1}^{-1}F\varphi(U_{l+1})D^{-1}-I_n=(I_n+X_l+\cdots)(I_n+B_l+C_l)(I_n-D\varphi(X_l)D^{-1})-I_n.
\]
It follows that
\[
\min\{w(U_{l+1}^{-1}F\varphi(U_{l+1})D^{-1}-I_n),w_r(U_{l+1}^{-1}F\varphi(U_{l+1})D^{-1}-I_n)\}\geq c_0
\]
and
\[
\min\{w(U_{l+1}^{-1}F\varphi(U_{l+1})D^{-1}-I_n-B_l),w_r(U_{l+1}^{-1}F\varphi(U_{l+1})D^{-1}-I_n-B_l)\}\geq (l+1)c_0.
\]
This yields the inductive step. Then $U=\lim_{l\rightarrow\infty} U_{l}$ satisfies the desired property.
\end{proof}



\begin{proposition}\label{prop:family-coordinate}
Let $M_A$ be a family of $\varphi$-modules over $\r_{A_K}$ of rank $n$, and let $x\in M(A)$. Let $N_x$ be a model of $M_x$. Suppose that the generic slopes (counted with multiplicity) of $N_x$ are
\[
a_{1}/a\geq\cdots\geq a_{n}/a
\]
where $a$ is a positive integer and $a_i\in\mathbb{Z}$ for $1\leq i\leq n$. Then there exists a Weierstrass subdomain $M(B)$ containing $x$ such that $\widetilde{M}_B=M_A\otimes_{\r_{A_K}}(B\widehat{\otimes}_{\Q}\widetilde{\r})$ is finite free over $(B\widehat{\otimes}_{\Q}\widetilde{\r})$, and admits a basis under which the matrix of $\varphi^a$ is an upper triangular matrix $F$ over $ B\widehat{\otimes}_{\Q}\widetilde{\r}^{\bd}$ with diagonal entries $F_{ii}=\pi^{-a_{n+1-i}}$.
\end{proposition}
\begin{proof}
It suffices to treat the case that $a=1$
by replacing $\varphi$ with $\varphi^a$.  We claim that
$\widetilde{N}_x$
admits a filtration such that the $i$-th successive quotient is isomorphic to $V_{\pi^{-a_{n+1-i}},1}$. For the $\mathrm{RF}$ case, the claim follows from Propositions \ref{prop:slope-decomposition}, \ref{prop:reverse-filtration} and Lemma \ref{lem:V-n-1}. For the $\mathrm{AF}$ case, the claim follows from \cite[Theorem 14.6.3]{Ke07}, \cite[Theorem 6.3.3(b)]{Ke05} and Lemma \ref{lem:V-n-1}. It thus follows that $\widetilde{N}_x$ admits a basis $\mathrm{e}_x$ under which the matrix $F_x$ of $\varphi$ is upper triangular with diagonal entries $(F_x)_{ii}=\pi^{-a_{n+1-i}}$.

By Proposition \ref{prop:global-model}, after shrinking $M(A)$, we may suppose that $M_A$ admits a finite free model $N_A$ with a basis $\mathrm{e}_A$. Let $U_x$ be the square matrix satisfying $p_x(\e_A)U_x=\mathrm{e}_x$. Then $U_x$ is invertible over $\widetilde{\r}^{\bd,r}$ for some $r>0$. By Corollary \ref{cor:lifting}, after shrinking $M(A)$, we lift $U_x$ to an invertible matrix $U$ over $A\widehat{\otimes}_{\Q}\widetilde{\r}^{\bd,r}$. Let $F\in A\widehat{\otimes}_{\Q}\widetilde{\r}^{\bd,r'}$ be the matrix of $\varphi$ under the base $\mathrm{e}=\mathrm{e}_AU$, and let $D$ be a lift of $F_x$ over $\widetilde{\r}^{\bd,r'}$ such that $D$ is upper triangular and $D_{ii}=\pi^{a_i}$. Note that $(FD^{-1})(x)=I_n$. By Proposition \ref{prop:shrinking}, we choose a Weierstrass subdomain $M(B)$ containing $x$ such that $\min\{w(FD^{-1}-I_n),w_{r'}(FD^{-1}-I_n)\}>0$ over $B\widehat{\otimes}_{\Q}\widetilde{\r}^{\bd,r'}$. Then by Lemma \ref{lemma:good-basis-family}, there exists a matrix $V$ over $B\widehat{\otimes}_{\Q}\widetilde{\r}^{\bd,qr}$ such that $V^{-1}F\varphi(V)D^{-1}-I_n$ is upper triangular nilpotent. It follows that the basis $\e V$ satisfies the desired property.
\end{proof}

\begin{theorem}\label{thm:semicontinuity}
Let $M_A$ be a family of $\varphi$-modules over $\r_{A_K}$. Then for any $x\in
M(A)$, there is a Weierstrass subdomain $M(B)$ containing $x$ such that the HN-polygon of $M_y$ lies above the HN-polygon of
$M_x$ with the same endpoint for
any $y\in M(B)$.
\end{theorem}
\begin{proof}
By Proposition \ref{prop:good-model-point}, we choose a good model $N_x$ of $M_x$. We apply Proposition \ref{prop:family-coordinate} to $N_x$. It then follows from Corollary \ref{cor:additive} that for any $y\in M(B)$, the generic HN-polygon of $N_y$ is the same as the generic HN-polygon of
$N_x$; hence it is the same as the HN-polygon of $M_x$. We thus deduce that the HN-polygon of $M_y$ lies above the HN-polygon of $M_x$ with the same endpoint by Proposition \ref{prop:polygon-comparison}.
\end{proof}

\begin{definition}
Let $\widetilde{M}_A$ be a $\varphi$-module over $A\widehat{\otimes}_{\Q}\widetilde{\r}$. For $c,d\in\mathbb{Z}$ with $d>0$, a \emph{$(c,d)$-pure model} of $\widetilde{M}_A$ is a finite free $A\widehat{\otimes}_{\Q}\widetilde{\r}^\int$-submodule $\widetilde{N}_A$  with
$$\widetilde{N}_A\otimes_{A\widehat{\otimes}_{\Q}\widetilde{\r}^\int}(A\widehat{\otimes}_{\Q}\widetilde{\r})=\widetilde{M}_A$$
so that $\widetilde{N}_A\otimes_{A\widehat{\otimes}_{\Q}\widetilde{\r}^\int}(A\widehat{\otimes}_{\Q}\widetilde{\r}^\bd)$ is stable under $\varphi$ and the $\varphi$-action induces an isomorphism $\pi^{c}(\varphi^d)^*\widetilde{N}_A\cong \widetilde{N}_A$. For $s\in\mathbb{Q}$, we say that $\widetilde{M}_A$ is \emph{pure of slope $s$} if $\widetilde{M}_A$ admits a $(c,d)$-pure model for some (hence any) $c,d\in\mathbb{Z}$ with $d>0$ and $s=c/d$. If $s=0$, we also say that $\widetilde{M}_A$ is \emph{\'etale}, and a $(0,1)$-pure model is also called an \emph{\'etale model}. By a \emph{slope filtration} of  $\widetilde{M}_A$ we mean a finite filtration
of $\varphi$-submodules of $\widetilde{M}_A$ such that the successive quotients are pure $\varphi$-modules over $A\widehat{\otimes}_{\Q}\widetilde{\r}$ with decreasing slopes.
\end{definition}

\begin{proposition}\label{prop:slope-filtration-unique}
Any $\varphi$-module over $A\widehat{\otimes}_{\Q}\widetilde{\r}$ admits at most one slope filtration.
\end{proposition}

\begin{proof}
It suffices to show that for any two pure $\varphi$-modules $\widetilde{M}_A$ and $\widetilde{M}_A'$ over $A\widehat{\otimes}_{\Q}\widetilde{\r}$, if the slope of $\widetilde{M}_A$ is bigger than the slope of $\widetilde{M}_A'$, then there is no nontrivial morphism from $\widetilde{M}_A$ to $\widetilde{M}_A'$. For this, by replacing $\varphi$ with a suitable powers of it, and by identifying $\mathrm{Hom}(\widetilde{M}_A, \widetilde{M}_A')$ with $(\widetilde{M}_A^{\vee}\otimes\widetilde{M}_A')^{\varphi=1}$, it suffices to show that any \'etale $\varphi$-module over $A\widehat{\otimes}_{\Q}\widetilde{\r}$ does not have rank 1 pure $\varphi$-submodule with positive integral slope. If the contrary is true, then there exists a nonzero column vector $\textbf{v}$ over $A\widehat{\otimes}_{\Q}\widetilde{\r}^r$, an invertible matrix $W$ over $A\widehat{\otimes}_{\Q}\widetilde{\r}^{\int,r}$ (in the $\mathrm{AF}$ case, set $\widetilde{\r}^{\int,r}$ to be $\at^{(0,r]}$) and some negative integer $n$ such that $W\varphi(\textbf{v})=\pi^n\textbf{v}$. By Proposition \ref{prop:limit-generalize} and Lemma \ref{lem:comparison}, we may suppose $w_s(W)>n$ for any $0<s\leq r$ by shrinking $r$.  It therefore follows that
\[
w_{s/q}(\textbf{v})=w_{s/q}(W\varphi(\textbf{v}))-n\geq w_{s/q}(W)+w_{s}(\textbf{v})-n>w_s(\textbf{v})
\]
for any $0<s\leq r$. This implies $w_{s/q^n}(\textbf{v})>w_{s}(\textbf{v})$ for any $n\geq 1$. We claim that $\textbf{v}$ is over $A\widehat{\otimes}_{\Q}\widetilde{\r}^\bd$. In fact, if $a_i$ the $i$-th coefficient of some entry of $\textbf{v}$, we then have
\[
v(a_i)+\frac{is}{q^n}\geq w_{s/q}(\textbf{v})>w_s(\textbf{v})
\]
for any $n\geq 1$. Hence $v(a_i)>w_s(\textbf{v})$; thus $v$ is over $A\widehat{\otimes}_{\Q}\widetilde{\r}^\bd$ and satisfies $w(\textbf{v})>w_s(\textbf{v})$. It then follows that $w(\textbf{v})=w(\varphi(\textbf{v})W)-n> w(\varphi(\textbf{v}))+w(W)\geq w(\textbf{v})$, yielding a contradiction.
\end{proof}

\begin{lemma}\label{lem:maximal-slope}
Let $L'$ be an extension of $K$ with strongly difference-closed residue field. Let $M$ be a $\varphi$-module over $\widetilde{\r}_{L'}$. Suppose that the maximal slope $m$ of $M$ is integral. Then for any $s\in\mathbb{Q}$, $m\geq s$ if and only if $M$ admits a nonzero eigenvector of $\varphi$ with eigenvalue $\pi^{[-s]}$.
\end{lemma}
\begin{proof}
If $\varphi(v)=\pi^{[-s]}v$ for some nonzero $v\in M$, it follows that $m\geq -[-s]\geq s$. Conversely, suppose $m\geq s$. Using Theorem \ref{thm:Robba-decomposition}, let $V_{\lambda,d}$ with $\lambda\in L'$ be in a Dieudonn\'e-Manin decomposition of the first step of the HN filtration of $M$. Since $v(\lambda)/d=-m$ is an integer, by Lemma \ref{lem:V-n-1}, we deduce that $V_{\lambda,d}$ admits a nonzero $\varphi$-eigenvector $v$ with eigenvalue $\pi^{-m}$. If $-m=[-s]$, then we are done. Otherwise, let $n=[-s]+m$, and put
\[
f=\sum_{i\in\mathbb{Z}}(\pi^{-n})^{\{i\}}u^{q^i}.
\]
Since $n>0$, it is clear that $f$ is a well-defined element of $\widetilde{\r}_{L'}$, and satisfies $\varphi(f)=\pi^nf$. It follows that $fv$ is a nonzero $\varphi$-eigenvector of $M$ with eigenvalue $\pi^{-m+n}=\pi^{[-s]}$.
\end{proof}

\begin{remark}
Let $M_A$ be a family of $\varphi$-modules over $\r_{A_K}$. It is clear that if  $\widetilde{M}_A=M_A\otimes_{\r_{A_K}}(A\widehat{\otimes}_{\Q}\widetilde{\r})$ is a $\varphi$-module over $A\widehat{\otimes}_{\Q}\widetilde{\r}$ admitting a slope filtration, the HN-polygons of $M_x$ are constant over $M(A)$.  In $\S2.4$, we will construct a family of $\varphi$-modules so that its HN-polygons are not even locally constant over the base. Thus this family does not admit a slope filtration over the extended Robba ring.
\end{remark}

\begin{theorem}\label{thm:global-filtration}
Let $M_A$ be a family of $\varphi$-modules over $\r_{A_K}$, and let $x\in M(A)$. Then there exists a Weierstrass subdomain $M(B)$ containing $x$ such that
the set of $y\in M(B)$ where the HN-polygon of $M_y$ coincides with the HN-polygon of $M_x$
forms a Zariski closed subset $M(C)$ of $M(B)$, and
\[
\widetilde{M}_{C}=M_A\otimes_{\r_{A_K}}(C\widehat{\otimes}_{\Q}\widetilde{\r})
\]
admits a unique slope filtration which lifts the HN-filtration of $\widetilde{M}_x$.
\end{theorem}
\begin{proof}
Let $a'$ be the least common multiple of the denominators of the slopes of $M_x$. Let $a=a'(2^n)!$ where $n$ is the rank of $M$. We view $M$ as a $\varphi^a$-module. Suppose that the slopes of $M_x$ are $s_1>\cdots>s_l$ where each $s_j$ has multiplicity $d_j$. Then all $s_j$ are integers. By Proposition \ref{prop:family-coordinate}, there exists a Weierstrass subdomain $M(B)$ containing $x$ and a basis $\widetilde{\mathrm{e}}_B$ of $\widetilde{M}_B=M_A\otimes_{\r_{A_K}}(B\widehat{\otimes}_{\Q}\widetilde{\r})$ such that the matrix $F$ of $\varphi^a$ under $\widetilde{\mathrm{e}}_B$ is $n\times n$ upper triangular over $B\widehat{\otimes}_{\Q}\widetilde{\r}^{\bd,r}$ for some $r>0$ with diagonal entries $F_{ii}=\pi^{m_i}$ where $m_i=-s_{j}$ if $d_l+\cdots+d_{j+1}<i\leq d_l+\cdots+d_{j}$.

As explained in the proof of Theorem \ref{thm:semicontinuity}, the HN-polygon of $M_y$ lies above the HN-polygon of $M_x$ for any $y\in M(B)$. Therefore the HN-polygon of $M_y$ coincides with the HN-polygon of $M_x$ if and only if the former lies below the latter. Since HN-polygons are convex, by Proposition \ref{prop:wedge}, we deduce that the HN-polygon of $M_y$ lies above the HN-polygon of $M_x$ if and only if the maximal slope of $\wedge^{d_1+\cdots+d_j}M_y$ is no less than $\sum_{i=1}^js_id_i$ for each $1\leq j\leq l$. By Proposition \ref{prop:HN-polygon-multifield-extended}(2), $M_y$ and $\widetilde{M}_y$ have the same HN-polygons.  Let $n_j$ be the rank of $\wedge^{d_1+\cdots+d_j}\widetilde{M}_B$; then $n_j\leq 2^n$. By the construction of $a$, we see that all the slopes of $\wedge^{d_1+\cdots+d_j}\widetilde{M}_y$ are integral. Hence by Proposition \ref{prop:HN-polygon-multifield-extended}(1), Lemma \ref{lem:maximal-slope} (for the $\mathrm{RF}$ case) and \cite[Proposition 3.3.2]{Ke05} (for the $\mathrm{AF}$ case), we conclude that the HN-polygon of $M_y$ lies below the HN-polygon of $M_x$ if and only if each component of $\wedge^{d_1+\cdots+d_j}\widetilde{M}_y$ admits a nonzero $\varphi^a$-eigenvector with eigenvalue $\pi^{-s_1d_1-\cdots-s_jd_j}$ for $1\leq j\leq l$. Let $S_j$ be the set of $y\in M(B)$ which satisfies this condition for $j$.
Therefore, to prove the first part of the theorem, it suffices to show that each $S_j$ is a Zariski closed subset of  $M(B)$.

Note that under the basis $\wedge^{d_1+\cdots+d_j}\widetilde{\e}_B$ of $\wedge^{d_1+\cdots+d_j}\widetilde{M}_B$,
the matrix $G$ for $\varphi^a$ is upper triangular, and its diagonal entries are of the forms $\pi^{-m}$ where $m$ goes through all the sums of $d_1+\cdots+d_j$ elements of the slope multiset of $M_x$. In particular, $G_{n_j,n_j}=\pi^{-s_1d_1-\cdots-s_jd_j}$ is the smallest power of $\pi$ among diagonal entries. Using the basis $\wedge^{d_1+\cdots+d_j}\widetilde{\e}_B$ and matrix $G$, a short computation shows that finding $\varphi^a$-eigenvectors with eigenvalues $\pi^{-s_1d_1-\cdots-s_jd_j}$ amounts to solving a series of equations $\varphi(\beta_i)-h_i\beta_i=\alpha_i$ for $1\leq i\leq n_j$, where
\[
h_i=\pi^{-s_1d_1-\cdots-s_jd_j}/G_{n_j+1-i,n_j+1-i},
\]
and
\[
\alpha_i=-G_{n_j+1-i,n_j+1-i}^{-1}(G_{n_j+1-i,n_j}\varphi(\beta_1)+\cdots G_{n_j+1-i,n_j+2-i}\varphi(\beta_{i-1})).
\]
Note that $h_1=1, \alpha_1=0$, and that $h_i$ is a negative power of $\pi$ for each $i\geq 2$. Let $L'=L^{\varphi=1}$; it follows that $\beta_1\in(B\widehat{\otimes}_{\Q}\widetilde{\r}^\bd)^{\varphi}=B_{L'}$. Furthermore, by Lemma \ref{lem:Frobenius-equation-family}(3), for any initial value $\beta_1\in B_{L'}$, this series of equation admits at most one solution.
By its definition, we see that $S_j$ is just the set $y\in M(B)$ at which the specialization of this series of equations admits a solution for
the initial value $1$ (hence for any initial value) in $k(y)\otimes_{\Q}L'$.
Applying Lemmas \ref{lem:Frobenius-equation-family} and \ref{lem:Zariski-closed-obstruction} inductively, we thus deduce that $S_j$ is a Zariski closed subset of $M(B)$, yielding the first part of the Theorem.

To show the rest of the theorem, we may suppose that $B=C$. Using the basis $\widetilde{\e}_B$ and matrix $F$, a short computation shows that finding $\varphi^a$-eigenvectors with eigenvalues $\pi^{-s_1}$ in $\widetilde{M}_B$ amounts to solving a series of equations $\varphi(\beta_i)-h_i\beta_i=\alpha_i$ for $1\leq i\leq n$, where
\[
h_i=\pi^{-s_1}/F_{n+1-i,n+1-i},
\]
and
\[
\alpha_i=-F_{n+1-i,n+1-i}^{-1}(F_{n+1-i,n}\varphi(\beta_1)+\cdots F_{n+1-i,n+2-i}\varphi(\beta_{i-1})).
\]
Note that $h_1=\cdots=h_{d_1}=1,\alpha_1=0$, and $h_{i}$ is a negative power of $\pi$ for each $d_1+1\leq i\leq n$. Note that for $\alpha\in B\widehat{\otimes}_{\Q}\widetilde{\r}^{\bd}$, any two solutions of the equation $\varphi(\beta)-\beta=\alpha$ differs by an element of $B_{L'}$. We therefore deduce from Lemma \ref{lem:Frobenius-equation-family}(2) that $\beta_1,\dots,\beta_{d_1}$ are of the forms
\begin{eqnarray*}
&\beta_1&=c_1\\
&\beta_2&=c_1\beta_{21}+c_2\\
&\vdots&\\
&\beta_{d_1}&=c_1\beta_{d_11}+c_2\beta_{d_12}+\cdots+c_{d_1}
\end{eqnarray*}
where $\beta_{ij}$ are fixed elements of $B\widehat{\otimes}_{\Q}\widetilde{\r}^{\bd}$ determined by the coefficients of $F_{n-c,n-d}$ for $0\leq c\leq d_1-1$ and $0\leq d\leq d_1-2$ and $c_1,\dots,c_{d_1}\in B_{L'}$.
Furthermore, by Lemma \ref{lem:Frobenius-equation-family}, we see that for any initial values $c_1,\dots,c_d$, both this series of equations and its specialization at any $y\in M(B)$ admit at most one solution. On the other hand, by Theorem \ref{thm:Robba-decomposition} (for the $\mathrm{RF}$ case), \cite[Proposition 4.2.5]{Ke05} (for the $\mathrm{AF}$ case) and Lemma \ref{lem:V-n-1}, we see that the set of $\varphi^a$-eigenvectors with eigenvalue $\pi^{-s_1}$ in $\widetilde{M}_y$ is a free $k(y)\otimes_{\Q}L'$-module of rank $d_1$.
We therefore deduce that the specialization of this series of equations at $y$ admits a unique solution in $k(y)\otimes_{\Q}\widetilde{\r}^\bd$ with the initial values $c_1(y),\dots,c_{d_1}(y)$. Hence by Lemmas \ref{lem:Frobenius-equation-family} and \ref{lem:Zariski-closed-obstruction}, we conclude that this series of equations admits a unique solution in $B\widehat{\otimes}_{\Q}\widetilde{\r}^\bd$ for any initial values $c_1,\dots, c_d$.
As a consequence, the set of $\varphi^a$-eigenvectors with eigenvalue $\pi^{-s_1}$ is a free $B_{L'}$-module generated by  $\textbf{v}_i=(0,\cdots,0,1,\cdots)^{t}$ where the first $1$ lies in the $i$-th entry for $1\leq i\leq d_1$. Set $\widetilde{M}^1_{B}$ as the $\varphi^a$-submodule of $\widetilde{M}_B$ generated by $\textbf{v}_i$ for all $1\leq i\leq d_1$.

It is clear that $\widetilde{M}^1_{B}$ is free of rank $d_1$ and $\widetilde{M}_B/\widetilde{M}^1_B$ is free of rank $n-d_1$. Furthermore, $\widetilde{M}^1_{B}$ is pure of slope $s_1$ as a $\varphi^a$-module. Thus by induction on $\widetilde{M}_B/\widetilde{M}^1_B$, we deduce that as a $\varphi^a$-module, $\widetilde{M}_B$ admits a slope filtration
\begin{equation}\label{eq:HN-filtration}
0\subset\widetilde{M}_{B}^1\subset\cdots\subset\widetilde{M}_B^l=\widetilde{M}_B.
\end{equation}
 Note that the $\varphi$-pullback of (\ref{eq:HN-filtration}) is also the slope filtration of $\widetilde{M}_B$. Hence (\ref{eq:HN-filtration}) is stable under $\varphi^*$ by Proposition \ref{prop:slope-filtration-unique},  yielding that each $\widetilde{M}_B^j$ is also a $\varphi$-submodule of $\widetilde{M}_B$. It remains to show that the successive quotients are pure as $\varphi$-modules. By induction, we only need to show this for $\widetilde{M}_B^1$. Since $k_L$ is strongly difference-closed, by \cite[Lemma 14.3.3]{Ke07}, we choose some $\lambda\in L^\times$ such that $\varphi^a(\lambda)/\lambda=\varphi(\pi^{s_i})/\pi^{s_i}$. A short computation shows that if $\textbf{v}$ is a $\varphi^a$-eigenvector with eigenvalues $\pi^{-s_i}$, so is $\lambda\varphi(\textbf{v})$. Hence the free $B_{L'}$-module generated by $\{\textbf{v}_i\}_{1\leq i\leq d_1}$ is stable under $\varphi$. This implies that the finite free good model of $\widetilde{M}_B^1$ generated by $\{\textbf{v}_i\}_{1\leq i\leq d_1}$ is stable under $\varphi$, yielding that $\widetilde{M}_B^1$ is pure of slope $s_1$ as a $\varphi$-module. We therefore conclude that (\ref{eq:HN-filtration}) is the slope filtration of $\widetilde{M}_B$ as a $\varphi$-module over $B\widehat{\otimes}_{\Q}\widetilde{\r}$.

\end{proof}
\subsection{An example}
In this subsection we construct a family of $\varphi$-modules with nonconstant HN-polygons. Our construction is inspired by the computation of $\mathrm{H}^1$ of rank 1 $(\varphi,\Gamma)$-modules in \cite{C07}.

Let $\Gamma=\mathrm{Gal}(\Q(\mu_{p^\infty})/\Q)$. We set the $\varphi, \Gamma$-actions on $\r_K$ as
\[
\gamma(f(T))=f((1+T)^{\chi(\gamma)}-1), \quad \varphi(f(T))=f((1+T)^p-1),\qquad  \gamma\in\Gamma, f(T)\in\r_K,
\]
where $\chi$ is the $p$-adic cyclotomic character. Set the ``$p$-adic $2\pi i$'' $t=\log(1+T)$ which satisfies $\varphi(t)=pt$ and $\gamma(t)=\chi(\gamma)t$.

\begin{definition}
By a $(\varphi,\Gamma)$-module over $\r_K$ we mean a $\varphi$-module over $\r_K$ equipped with a continuous semilinear $\Gamma$-action which commutes with the $\varphi$-action.
\end{definition}

\begin{definition}
For any continuous character $\delta:\Q^\times\rightarrow K^\times$, define the rank $1$ $(\varphi,\Gamma)$-module \emph{$\r_K(\delta)$} by setting the $\varphi,\Gamma$-action as
\begin{equation}\label{eq:twist-def}
\varphi(av)=\delta(p)\varphi(a)v, \quad \gamma(av)=\delta(\chi(\gamma))\gamma(a)v, \qquad a\in R_K
\end{equation}
for some $\r_K$-basis $v$. For $\delta(x)=x^{-n}$ for some $n\in\mathbb{Z}$, we denote $\r_K(\delta)$ by $\r_K(n)$. For any $(\varphi,\Gamma)$-module $M$ over $\r_K$, set the $(\varphi,\Gamma)$-module $M(\delta)=M\otimes_{\r_K}\r_K(\delta)$.
\end{definition}
\begin{remark}
We set $\r_K(n)$ as $\r_K(x^{-n})$ in order to match our sign convention of slopes. Beware that our sign convention is opposite to the one used in \cite{C07}.
\end{remark}

If $p>2$, then $\Gamma$ is topologically procyclic. We fix a topological generator $\gamma$ of $\Gamma$.
\begin{definition}
Suppose $p>2$. For a $(\varphi,\Gamma)$-module $M$ over $\r_K$, set the complex $C^{\bullet}_{\varphi,\gamma}(M)$ as
 \[
  0\stackrel{}{\longrightarrow}M\stackrel{d_{1}}{\longrightarrow}M\oplus M
 \stackrel{d_{2}}{\longrightarrow}M\stackrel{}{\longrightarrow}0
\]
with $d_1(x) = ((\gamma - 1)x, (\varphi - 1)x)$ and $d_2(x,y) =
(\varphi - 1)x - (\gamma - 1)y$. Let $\mathrm{H}^{\bullet}(M)$ denote
cohomology groups of this complex.
\end{definition}
It is straightforward to see that $\mathrm{H}^1(M)$ classifies the extension of the trivial $(\varphi,\Gamma)$-module $\r_K$ by $M$.
\begin{remark}
If $p=2$, $\Gamma$ is no longer topologically procyclic, we need to modify the definition of $C^{\bullet}_{\varphi,\gamma}(M)$. See \cite[\S 2.1]{L08} for more details.
\end{remark}

\begin{lemma}\label{lem:Gamma-stable}
For a $(\varphi,\Gamma)$-module $M$ over $\r_K$, the slope filtration of $M$ is a filtration of $(\varphi,\Gamma)$-submodules.
\end{lemma}
\begin{proof}
Since the slope filtration of $M$ is unique and $\varphi,\Gamma$-actions commute, it is stable under the $\Gamma$-action. This yields the lemma.
\end{proof}

\begin{lemma}\label{lem:etale-split}
If $M$ is a $(\varphi,\Gamma)$-module over $\r_K$ satisfying the exact sequence
\[
0\stackrel{}{\longrightarrow}\r_K(-1)\stackrel{}{\longrightarrow}
M\stackrel{}{\longrightarrow}\r_K(1)\stackrel{}{\longrightarrow}0.
\]
then $M$ is \'etale if and only if the
exact sequence is non-split.
\end{lemma}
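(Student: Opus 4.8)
The plan is to reduce the \'etaleness of $M$ to a statement about its slope filtration as a $\phi$-module over $\r_L$, and then to exploit Lemmas 3.19 and 3.20 to analyse the possible destabilizing line. First, degree is additive in short exact sequences, so $\deg M=\deg\r_L(1)+\deg\r_L(-1)=0$ and $\mu(M)=0$; since $L$ is discrete, the slope filtration theorem shows that $M$ is \'etale if and only if it is pure of slope $0$, which (as $\mu(M)=0$) is equivalent to $M$ being semistable. Thus it suffices to prove that $M$ is semistable if and only if the sequence is non-split. One direction is immediate: if the sequence splits, then $M$ contains a copy of $\r_L(-1)$ as a $\phi$-submodule of slope $-1<0=\mu(M)$, so $M$ is not semistable, hence not \'etale.

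For the reverse direction I would argue by contraposition. Suppose $M$ is not \'etale, hence not semistable, and let $0\subset N\subset M$ be its Harder--Narasimhan filtration, so that $N$ is a saturated rank $1$ $\phi$-submodule with $\deg N=\mu(N)<0$. The key point will be that $N$ is automatically $\Gamma$-stable: $\Gamma$ acts on $M$ by bijective semilinear maps commuting with $\phi$, each of which is an isometry for $w$ on $\r_L^{\bd}$ (the action $T\mapsto(1+T)^{\chi(\gamma)}-1$ preserves $\r_L^{\int}$), so $\Gamma$ carries the HN filtration of $M$ to another HN filtration and therefore fixes it by uniqueness. Hence $N$ is a rank $1$ $(\phi,\Gamma)$-module, and by Lemma 3.20 it has a generator $v$ with $\phi(v)=\lambda v$, $\lambda\in L$; in particular $\deg N=v_L(\lambda)$. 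Next I would look at $\beta(v)\in\r_L(-1)$: if $\beta(v)=0$ then $N\subseteq\alpha(\r_L(1))$, forcing $\deg N\geq\deg\r_L(1)=1$ by Corollary 1.17, contrary to $\deg N<0$; so $\beta(v)\neq0$. Writing $\beta(v)=fw$, where $w$ is the standard generator of $\r_L(-1)$ with $\phi(w)=p^{-1}w$ and $f\in\r_L\setminus\{0\}$, the $\phi$-equivariance of $\beta$ yields $\phi(f)=(p\lambda)f$ with $p\lambda\in L$. Lemma 3.19 then gives $p\lambda=p^{\,i}$ and $f=ct^{\,i}$ for some $i\in\mathbb{N}$ and $c\in L^{\times}$, whence $\deg N=v_L(\lambda)=i-1<0$ forces $i=0$; thus $f=c$ is a unit of $\r_L$ and $\beta|_N\colon N\xrightarrow{\ \sim\ }\r_L(-1)$ is an isomorphism. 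Its inverse is a $(\phi,\Gamma)$-splitting of the sequence, contradicting the hypothesis of non-splitness, so a non-split $M$ must be \'etale.

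The only genuine obstacle is the step that pins down $\deg N$: a saturated $\phi$-submodule of $M$ could a priori have slope in $\tfrac1{e_L}\mathbb Z$ rather than in $\mathbb Z$, and one must rule out the non-integral case before one can identify $N$ with the subobject $\r_L(-1)$. The role of $\Gamma$-stability of the HN filtration is precisely to bring $N$ within reach of Lemmas 3.19 and 3.20, which force $\lambda\in p^{\mathbb Z}$ and hence $\deg N=-1$. The remaining ingredients---additivity of degree, the dictionary between \'etale and semistable over a discrete field, and Corollary 1.17---are routine.
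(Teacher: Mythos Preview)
Your proof is correct and follows essentially the same route as the paper's: compute $\deg M=0$, reduce \'etaleness to semistability, take a destabilizing rank~$1$ $(\phi,\Gamma)$-submodule $N$, use Lemma~3.20 to get $\phi(v)=\lambda v$ with $\lambda\in L$, then push $v$ through $\beta$ and apply Lemma~3.19 to force $\lambda=p^{-1}$ and $f$ constant, yielding the splitting. The paper's argument is slightly terser---it simply asserts the existence of a rank~$1$ $(\phi,\Gamma)$-submodule $N$ of negative degree without spelling out why $N$ is $\Gamma$-stable---whereas you make this explicit via uniqueness of the HN filtration; but the substance is identical.
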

\begin{proof}
The ``only if'' part is trivial. Suppose that the exact sequence is non-split. We
first have $\deg(M)=0$. If $M$ is not \'etale, by Lemma \ref{lem:Gamma-stable}, it has a
rank 1 $(\varphi,\Gamma)$-submodule $N$ with positive slope. Since $\r_K(-1)$ is a saturated
$(\varphi,\Gamma)$-submodule of $M$ with negative slope, we have $N\cap\r_K(-1)=0$
 by Corollary \ref{cor:stable}. This implies that $N$ maps isomorphically to
a $(\varphi,\Gamma)$-submodule of $\r_K(1)$. The $(\varphi,\Gamma)$-submodules of $\r_K(1)$ are of the forms $t^k\r_K(1)$
for $k\in\mathbb{N}$ (see \cite[Lemme 3.2, Remarque 3.3]{C07} for a proof (the proof works for our general $K$)). Hence $N\cong t^k\r_K(1)$ for some $k\geq 1$ as the exact sequence is non-split. It therefore follows that $\deg(N)\leq0$, yielding a contradiction.
\end{proof}

\begin{lemma}\label{lem:injective-cohomology}
Suppose $p>2$. Then the natural map $\mathrm{H}^1(\r_{\mathbb{Q}_p}(\delta))\ra \mathrm{H}^1(\r_L(\delta))$ is injective for any continuous character $\delta:\Q^\times\rightarrow \Q^\times$ such that $\delta(p)\neq p^{-n}$ for any $n\in\mathbb{N}$.
\end{lemma}
\begin{proof}
Suppose $(a,b)$ represents an element in the kernel of this map; then
\[
a=(\delta(\chi(\gamma))\gamma-1)f, \quad b=(\delta(p)\varphi-1)f
\]
for some $f=\sum_{i\in\mathbb{Z}}a_iT^i\in\r_L$. We claim that $f$ belongs to $\r_{\Q}$. We proceed by induction on $j$ to show that $a_{-j},a_{j}\in\Q$ for each $j\in \mathbb{N}$.
The constant term of $(\delta(p)\varphi-1)f$ is $(\delta(p)-1)a_0$. Since $\delta(p)\neq1$, we get $a_0\in\mathbb{Q}_p$. Now suppose $a_{1-j},\dots, a_0,\dots,a_{j-1}\in \mathbb{Q}_p$ for some $j\geq1$. Note that the coefficient of $T^j$ in $(\delta(p)\varphi-1)f$ is the sum of $(\delta(p)p^{j}-1)a_j$ and a $\Q$-linear combination of $a_1,\dots,a_{j-1}$. Since $\delta(p)p^{j}-1\neq0$, we get $a_j\in\Q$. Note that for any positive integer $k$, we have
\[
\varphi(\frac{1}{T^k})=\frac{1}{((1+T)^p-1)^k}=\frac{1}{T^{pk}}\cdot\frac{1}{((1+\frac{1}{T})^p-\frac{1}{T^p})^k}
=\frac{1}{T^{pk}}(1-\frac{pk}{T}-\cdots).
\]
Hence the coefficient of $T^{-pj}$ in $(\delta(p)\varphi-1)f$ is the sum of $(\delta(p)-1)a_{-j}$ and a linear combination of  $a_{-1},\dots,a_{-j+1}$, yielding $a_{-j}\in\Q$. Hence $f\in \r_{\Q}$. This yields $(a,b)=0$ in $\mathrm{H}^1(\r_{\Q}(\delta))$.
\end{proof}

\begin{example}
Let $A=\Q\langle x\rangle$. By \cite[Theorem 2.9]{C07}, $\mathrm{H}^1(\r_{\Q}(-2))$ is a one dimensional $\Q$-vector space. Choose a representative $(a,b)$ of a nonzero element of $\mathrm{H}^1(\r_{\Q}(-2))$. Let $M_A$ be the rank 2
$\varphi$-module over $\r_{A_K}=\r_{K\langle x\rangle}$ such that the $\varphi$-action
is defined by the matrix
\[
\left(
\begin{matrix}
 p^2&xb \\
 0&1
\end{matrix}
\right)
\]
for some basis. Using the same basis, we equip a $\Gamma$-action on $M_A$ by the matrix
\[
\left(\begin{matrix}
 \chi^2(\gamma)&xa \\
 0&1
\end{matrix}
\right)
\]
for any $\gamma\in\Gamma$. It is clear that at each
$y\in M(A)$, $M_y$ is an extension of $k(y)\otimes_{\Q}\r_{K}$ by
$k(y)\otimes_{\Q}\r_{K}(-2)$ defined by $(x(y)a,x(y)b)$.
It follows from Lemmas \ref{lem:injective-cohomology} and \ref{lem:etale-split} that $M_y(1)$ is \'etale if and only if $y$ is not the origin. Hence the HN-polygons of $M_y$ are not locally constant around the origin.
\end{example}

\end{document}